\documentclass[10pt,a4paper,nouppercase,nowrite]{article}
\usepackage[T1]{fontenc}
\usepackage[utf8]{inputenc}
\usepackage[english]{babel}
\linespread{1.15}
\usepackage{amsmath}
\numberwithin{equation}{section}
\usepackage{amssymb}
\usepackage{amsthm}
\usepackage{calc}
\usepackage{accents}
\usepackage{bbm}
\usepackage{authblk}
\usepackage{marginnote}

\usepackage{pgfplots}
\pgfplotsset{compat=1.18}

\newcommand{\di}{{\rm d}}
\newcommand{\Sp}{{\textsc (S)} }

\newcommand{\eps}{\varepsilon}
\newcommand{\F}{\widetilde{\mathcal{F}}}
\newcommand{\R}{\mathbb{R}}
\newcommand{\T}{\mathbb{T}}
\newcommand{\triplenorm}[1]{%
	\left\lvert\!\left\lvert\!\left\lvert #1
	\right\rvert\!\right\rvert\!\right\rvert}

\newtheorem{defn}{Definition}[section]
\newtheorem{lem}[defn]{Lemma}
\newtheorem{prop}[defn]{Proposition}
\newtheorem{thm}[defn]{Theorem}

\theoremstyle{definition}

\newtheorem{exmpl}[defn]{Example}
\newtheorem{rmk}[defn]{Remark}
\usepackage{palatino}
\usepackage{mathrsfs}
\usepackage{fancyhdr}
\usepackage[autostyle, italian=guillemets]{csquotes}
\usepackage{guit} 
\usepackage{frontespizio}
\usepackage{graphicx}
\usepackage[left=2.75cm,right=2.75cm,top=3.2cm,bottom=3.2cm]{geometry}
\usepackage{filecontents}
\usepackage{hyperref}
\expandafter\def\expandafter\normalsize\expandafter{%
	\normalsize%
	\setlength\abovedisplayskip{5.5pt}%
	\setlength\belowdisplayskip{5.5pt}%
	\setlength\abovedisplayshortskip{2pt}%
	\setlength\belowdisplayshortskip{2pt}%
}

	\title{The local turnpike property in Mean Field Control and Games with quadratic Hamiltonian}
	\author{Marco Cirant and Nicol\`o De Bernardi}

	\date{}
	\begin{document}

	\maketitle
	
	\begin{abstract} 
		We study the local stability properties of solutions to ergodic and discounted mean field games systems, as the time horizon $T \to +\infty$, around stationary equilibria, when the Hamiltonian is quadratic. We replace the usual monotonicity of the coupling term with a weaker, local assumption on the stationary equilibrium (that need not be unique), stemming from a second-order strict positivity condition. This new stability assumption, together with a symmetry property of the system, allows us to derive an exponential turnpike property for those solutions that are close to the stationary one, whenever the spatial domain $\Omega$ is either the flat torus $\T^n$ or $\R^n$. Finally, through a fixed-point argument, we establish the actual existence of stable solutions, both on the finite horizon $[0,T]$ and on the infinite horizon, in the periodic setting $\Omega=\T^n$, provided that the initial (and terminal) data are close enough to the stationary equilibrium.
         \end{abstract}
         
	\tableofcontents
		
		\section{Introduction}
	This paper is devoted to the study of the long time behavior of solutions to discounted Mean Field Games systems of PDEs with quadratic Hamiltonian, of the following form:
	\begin{equation}
		\begin{cases}
			-\partial_t u - \Delta u + \frac{|Du|^2}{2} - f(x,m) + \delta u = 0,& \text{ on } (0,T) \times \Omega \\
			\partial_t m - \Delta m - \text{div}(m Du)=0,& \text{ on } (0,T) \times \Omega \\
			m(0,x)=m_0(x), \qquad u(T,x)=u_T(x),& \text{ on } \Omega,
		\end{cases}
		\label{dynsys}
	\end{equation}
	where $T > 0$ is the time-horizon, $f : \Omega \times \R \to \R$ is of class $C^2$ and has bounded first and second derivatives with respect to the second component, $\delta \geq 0$ is the discount factor. Here, $\Omega$ can be either the Euclidean space $\R^n$ or the flat torus $\T^n$ (in which case, data are periodic in the $x$ variable); $m_0, u_T$ are the initial distribution and the terminal cost, respectively, and they are assumed to be of class $C^{2,\alpha}$.
	
	This system of PDEs has been introduced in \cite{LLcras} to describe \textit{Mean Field Nash equilibria} among a population of players with density $m(t,x)$, where a typical player controls his own state
	\[
\di X_s = \alpha_s \di s + \sqrt{2} \, \di B_s \qquad\qquad \text{in }\Omega,
\]
with the aim of minimizing the cost
\[
J(\alpha) =  \mathbb{E} \int_{0}^T e^{-\delta s}\left(\frac12|\alpha_s|^2 + f\big(X_s, m(s,X_s) \big) \right) \di s + e^{-\delta T}u_T(X_T).
\]
	Within this framework, $u$ is the so-called value function of the typical player. Moreover, the same PDE system constitutes the optimality conditions for the \textit{Mean Field optimal control problem} of the functional (see again \cite{LLcras})
	\[
	\mathcal F(w) = \int_0^T \int_{\Omega} e^{-\delta s}\left(\frac{1}2m(s,y) |w(s,y)|^2 + F\big(y, m(s,y) \big) \right) \di y \di s + e^{-\delta T}\int_{\Omega} u_T(y) m(T,y)\di y
	\]
	subject to
	\[
	\partial_t m - \Delta m + \text{div}(m w)=0,\quad  \text{ on } (0,T) \times \Omega \qquad m(0,x)=m_0(x)
	\]
	where $F(x,m) = \int_0^m f(x,\nu)\di \nu$.
	
	\medskip
	
	The main purpose of this paper is to address the following problem. Consider a solution $(\bar u, \bar m)$ to the \textit{stationary}  system
	\begin{equation}
		\begin{cases}
			\Delta \bar{u} - \frac{|D\bar{u}|^2}{2} + f(x,\bar{m}) - \delta \bar{u} = 0,& \text{ on }  \Omega \\
			\Delta \bar{m} + \text{div}(\bar{m} D\bar{u})=0,& \text{ on }  \Omega, \\
			\int_{\Omega} \bar m = 1,
		\end{cases}
		\label{statsys}
	\end{equation}
	then \textit{under what conditions can we guarantee that $(\bar u, \bar m)$ attracts dynamic equilibria $(u, m)$ as $T\to \infty$, at least those that are sufficiently close to $(\bar u, \bar m)$ itself ?} The main point of our work is not to assume any kind of global uniqueness of stationary solutions $(\bar u, \bar m)$ and focus on their \textit{local} stability properties. Note that if $\delta = 0$, then we shall look at the so-called \textit{ergodic} system, where the additional constant $\lambda$ appears in the HJ equation:
	\begin{equation}
		\begin{cases}
			\Delta \bar{u} - \frac{|D\bar{u}|^2}{2} + f(x,\bar{m}) - \lambda = 0,& \text{ on }  \Omega \\
			\Delta \bar{m} + \text{div}(\bar{m} D\bar{u})=0,& \text{ on }  \Omega, \\
			\int_{\Omega} \bar m = 1.
		\end{cases}
		\label{ergodic}
	\end{equation}
	In such case, we will assume throughout the paper that $\lambda = 0$. More precisely, given a triple $(\bar u, \bar m, \lambda)$ satisfying the previous system, we tacitly replace $f$ by $f - \lambda$.
	The previous question is formulated in terms of stability properties of PDE systems, but it can be rephrased within the context of optimal control, in particular when $\delta = 0$. In such case, looking at the \textit{static} Mean Field optimal control problem, that is, of the functional
	\[
	\overline{\mathcal F}(w) =\int_{\Omega}\frac{1}2m(y) |w(y)|^2 + F\big(y, m(y) \big) \di y  
	\]
	subject to
	\[
	- \Delta m + \text{div}(m w)=0,\quad  \text{ on } \Omega \qquad \int_{\Omega} m(y) \di y=1,
	\]
	we want to investigate conditions on static minimizers (or critical points) $(\bar w, \bar m)$ ensuring their ability to attract minimizers (or critical points) of $\mathcal F$ as $T \to \infty$.
	
	\medskip
	
	Being the PDE system under consideration at the crossroads of Mean Field Games and (infinite dimensional) optimal control, we can borrow intuitions from both sides. In the optimal control literature, the phenomenon that solutions spend most of the time close to a steady state is often referred to as the \textit{turnpike property}. Its analysis has a long history that started in the '60s, though the first observations  in applied sciences go back to works of von Neumann. A fundamental observation coming from the optimal control literature is that (local) stability of stationary states originates from the (local) saddle point structure of the optimality conditions (or extremal equations, or Pontryagin system), that takes the form of a Hamiltonian system. This fact was already suggested in \cite{Wilde72} in the Linear-Quadratic setting, and it has been more recently developed in general frameworks in \cite{ZP, TZh, TZZ, TZ14}. We refer to the very nice survey \cite{TZsur} for historical insights, recent developments and further references.
	
	The renewed interest in the turnpike phenomenon in optimal control has grown in parallel with the study of long-time behavior in Mean Field Game theory. Within this context, we mention the pioneering works \cite{CLLP1, CLLP2} and the more recent ones \cite{BZ24, CP19, CCDE, CM24, CP21}. The common thread in these contributions is the presence of some assumption that guarantees in fact a \textit{global} turnpike phenomenon: both dynamic and stationary solutions are \textit{unique}, and the former converge in long time to the latter independently of the initial / terminal conditions. These assumptions typically take the form of \textit{monotonicity} (or ``smallness'' of the data in some sense). It should be stressed that \textit{turnpike} is usually intended in this global sense, while here we focus on local stability. Whether it is possible to observe a global turnpike behavior in presence of several stationary states is a difficult, interesting question, and the answer is very likely to depend on the specific model under consideration. Nevertheless, if there was a specific, global mechanism that drives trajectories close to a certain stationary state, what we could deduce via the methods developed here is that such stationary state would be in fact exponentially attractive, provided that a mild nondegeneracy condition (not related in general to monotonicity) holds.
	
	The main purpose of this work is indeed to investigate problems with absence of monotonicity, where multiple stationary states are likely to coexist. The stability analysis of stationary equilibria in non-monotone frameworks has been addressed in few works \cite{BK, CCS, CC24, MSM22}, but it seems at this stage still limited to quite specific models. See also \cite{CarMas} for the application of Weak KAM methods to MFG. In fact, without monotonicity, dynamic phenomena that are far from the turnpike one have been observed, such as periodic behavior \cite{C19} or heteroclinic connections \cite{CC21}.
	
	It is important to observe that, once specialized to the MFG setting, the methods developed in optimal control theory seem to require assumptions that resemble the ones of monotonicity (at least in a local sense). That is why, though we are taking inspiration from those methods here, some ideas will be introduced to address truly non monotone situations. This crucial point will be discussed in detail in Section \ref{sdisc}.
		
	The presence of the quadratic Hamiltonian simplifies the analysis: the method presented here exploits a sort of \textit{symmetry} property of the MFG, which looks clear in the quadratic case. This symmetry property will be clarified and formalized in a general sense in Section \ref{sdisc} below.
		
	\medskip

	We now state the main results. The crucial assumption is that we are given a stationary state  $(\bar u, \bar m)$, that is a (classical) solution to~\eqref{statsys}, satisfying the following properties (note that $\bar m$ is everywhere positive).
	\begin{itemize}
		\item [\Sp]  
		$\bullet$ \, The identity $D\bar m = -\bar m D\bar u $ holds on $\Omega$, \\
		$\bullet$ \,  $\bar m$ satisfies a Poincar\'e inequality with constant $C_P > 0$, i.e.
		\[
		\int_{\Omega} g^2(x) \bar m(x) \di x - \left(\int_{\Omega} g(x) \bar m(x) \di x \right)^2 \le C_P \int_{\Omega} |Dg|^2(x) \bar m(x) \di x \qquad \forall g \in W^{1,2}(\bar m \di x),
		\]
		$\bullet$ \, there exists $\eta \in (0,1)$ such that 
		\begin{equation}
			\int_{\Omega} f_m(x,\bar{m}) \mu^2 + \bar m\left| D \left( \frac{\mu}{\bar{m}} \right) \right|^2 + \frac{\delta \mu^2}{\bar{m}} \, \di x \geq \eta \int_{\Omega} \bar m\left| D \left( \frac{\mu}{\bar{m}} \right) \right|^2 \di x,
			\label{T3}
		\end{equation}
		for all $\mu \in L^1(\Omega) \cap L^\infty(\Omega)$ such that $\int_{\Omega} \mu(x) \di x = 0$, $\mu/\bar m \in  W^{1,2}(\bar m \di x)$, $\mu/\bar m \in L^\infty(\Omega)$. 
	\end{itemize}
	While the first two points are generally expected to be true, the third one is way more delicate. This is a sort of second-order positivity condition. A thorough description of its derivation, meaning and role in our analysis will be given in Section \ref{sdisc}. We anticipate here that \Sp can be also seen as the positivity of a certain ``principal eigenvalue'' for the linearized stationary system, as described in Remark \ref{firsteigrmk}. This is stronger than the notion of stable solution explored in \cite{BLS25, BriCar17}, where, in some sense, it is required that zero is not an eigenvalue. Note in particular that \cite{BriCar17} focuses on the nondegeneracy properties of the linearized time-dependent problem. Heuristically, what we do here is to show how to transfer the nondegeneracy properties of the linearized \textit{stationary} problem to stability properties of the time-dependent one, uniformly with respect to the time horizon. Several examples of stationary solutions exhibiting such nondegeneracy properties, beyond the usual monotonicity settings, are given below, see Remarks \ref{generic}, \ref{localLLrmk} and Example \ref{exg}.

\medskip
	
	The first result is technical, but crucial to develop what follows. It is an \textit{a priori} estimate on solutions to the MFG system describing a contractivity property; in particular, it states that if we are given a solution $(u,m)$ that is $\eps$-close to the stationary solution $(\bar u, \bar m)$ (in terms of a suitable weighted in time sup-norm), then it has to be in fact $\frac\eps2$-close to $(\bar u, \bar m)$, at least if the initial/final condition $(m(0), u_T)$ is close enough to $(\bar m, \bar u)$.

	\begin{thm} \label{apriori}
		Assume that \Sp holds. Let $(u,m)$ and $(\bar u,\bar m)$ be (classical) solutions to \eqref{dynsys} and \eqref{statsys} respectively, such that
		\[
		\mu=m - \bar{m}, \qquad v=u-\bar{u}
		\]
		satisfy the integrability conditions \eqref{integrab}. Then, there exist $\bar{\eps} > 0, \bar{\gamma} > 0, \sigma > 0$ (which depend only on $\bar{m}, \bar u, \eta, f$, but not on the time horizon $T$), such that the following is true for all $\gamma \le \bar{\gamma}$, $\delta < \sigma$: if
		\begin{equation*}
			\begin{aligned}
			& \| m_0(\cdot)-\bar m(\cdot)\| _{L^{\infty}(\Omega)} + \| u(T,\cdot) - \bar u(\cdot)\|_{W^{1,\infty}(\Omega)} < \gamma,
			\\
			& \left \| \frac{m_0(\cdot)-\bar m(\cdot)}{\sqrt{\bar{m}(\cdot)}} \right \|_{L^{2}(\Omega)} + \left\| \sqrt{\bar{m}(\cdot)} \left| Du(T,\cdot) - D\bar u(\cdot) \right| \right\|_{L^2(\Omega)} < \gamma,
			\\
			& \|m(t,\cdot)-\bar m(\cdot)\|_{L^{\infty}(\Omega)} \le \bar{\eps}\left( e^{-\sigma_1 t} + e^{-\sigma_2(T-t)} \right) \qquad \forall t \in [0,T],
		\end{aligned}
		\end{equation*}
		then
		\begin{equation*}
			\|m(t,\cdot)-\bar m(\cdot)\|_{L^{\infty}(\Omega)} \le \frac{\bar{\eps}}{2} \left( e^{-\sigma_1 t} + e^{-\sigma_2(T-t)} \right) \qquad \forall t \in [0,T],
		\end{equation*}
		where $\sigma_1=\frac{\sigma-\delta}{n+1}$ and $\sigma_2=\frac{\sigma+\delta}{n+1}$.
	\end{thm}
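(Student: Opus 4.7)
The strategy is a bootstrap (or continuation) argument: we exploit the hypothesized a priori exponential bound on $\mu = m-\bar m$ to treat $(v,\mu) = (u-\bar u,\, m-\bar m)$ as a small perturbation of zero, linearize the MFG system around $(\bar u, \bar m)$, and invoke the spectral positivity \eqref{T3} together with the smallness of the boundary data to squeeze the multiplicative constant from $\bar\eps$ down to $\bar\eps/2$.

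First I would subtract \eqref{statsys} from \eqref{dynsys}, using the algebraic identity $|Du|^2 - |D\bar u|^2 = 2\, D\bar u \cdot Dv + |Dv|^2$, to obtain the forward-backward system
\begin{equation*}
\begin{cases}
-\partial_t v - \Delta v + D\bar u \cdot Dv + \tfrac{1}{2}|Dv|^2 - \big[f(x,m)-f(x,\bar m)\big] + \delta v = 0, \\
\partial_t \mu - \Delta \mu - \text{div}\big(\bar m\, Dv + \mu D\bar u + \mu Dv\big) = 0.
\end{cases}
\end{equation*}
Writing $f(x,m)-f(x,\bar m) = f_m(x,\bar m)\,\mu + O(\mu^2)$, this is a \emph{linear} forward-backward system perturbed by terms quadratic in $(v, Dv, \mu)$; the a priori $L^\infty$ bound on $\mu$ and parabolic regularity for $v$ should make these remainders of strictly higher order and absorbable.

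The heart of the proof is an energy estimate for the linearized system. Setting $w = \mu/\bar m$, the identity $D\bar m = -\bar m D\bar u$ (from \Sp) and integration by parts yield an evolution of the form
\begin{equation*}
\frac{\di}{\di t}\int_{\Omega}\frac{\mu^2}{\bar m}\,\di x = -2\int_{\Omega} \bar m |Dw|^2\,\di x - 2\int_{\Omega} \bar m\, Dw \cdot Dv\,\di x + (\text{nonlinear remainders}),
\end{equation*}
paired with a dual computation for $\int \bar m |Dv|^2\,\di x$. Summing these and invoking \Sp, including the weighted Poincaré inequality, produces exactly the coercive quadratic form of \eqref{T3} and hence a Gronwall-type differential inequality. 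The ``symmetry'' specific to the quadratic Hamiltonian enters here: the linearized HJ and FP operators are formally adjoint in the weighted space $L^2(\bar m\,\di x)$, so the coupled system essentially diagonalizes into two decoupled modes decaying forward-in-time at rate $\sigma_1$ and backward-in-time at rate $\sigma_2$, with the shift $\pm\delta$ arising from the discount term.

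Granted weighted $L^2$ exponential control of $\mu/\sqrt{\bar m}$ by $\lambda\cdot(e^{-\sigma_1 t}+e^{-\sigma_2(T-t)})$, I would recover the required $L^\infty$ bound by parabolic $L^p \!\to\! L^\infty$ regularity in the spirit of Moser iteration: the exponent $1/(n+1)$ in $\sigma_1,\sigma_2$ is the natural loss of this upgrade in dimension $n$. Choosing $\bar\lambda$ small enough in terms of $\bar\eps$ then closes the bootstrap at level $\bar\eps/2$. The principal obstacle is the forward-backward coupling: since neither $v$ nor $\mu$ is prescribed at both endpoints, the natural energies are not monotone in time, and one must carefully pair the two modes created by the symmetry while absorbing the nonlinear remainders \emph{uniformly} in $T$ — which is precisely what the a priori exponential hypothesis is designed to make possible.
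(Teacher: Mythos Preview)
Your outline captures the right architecture but misses the single most important idea. Computing $\frac{\di}{\di t}\int \mu^2/\bar m$ and, separately, $\frac{\di}{\di t}\int \bar m |Dv|^2$ and then ``summing'' does \emph{not} close: neither quantity is monotone (one evolves forward, the other backward), and their sum does not produce a differential inequality with a sign. The paper's key device is the combined functional
\[
\tilde\Phi(t)=\int_{\Omega} e^{-\delta t}\Big(\mu v + \frac{\mu^2}{\bar m}\Big)\,\di x,
\]
whose time derivative, after using $D\bar m=-\bar m D\bar u$ and \Sp, satisfies $-\dot{\tilde\Phi}\ge \sigma|\tilde\Phi|$ (Proposition~\ref{phinl1}). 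The cross term $\int\mu v$ is essential: it is exactly what converts the forward--backward coupling into a single scalar inequality that integrates to two-sided exponential decay. Your statement that the system ``essentially diagonalizes into two decoupled modes'' is not what happens; there is no diagonalization, only this carefully chosen Lyapunov quantity. From the decay of $\tilde\Phi$ one then extracts weighted-$L^2$ decay of $\mu/\sqrt{\bar m}$ and $\sqrt{\bar m}\,|Dv|$ separately (Propositions~\ref{phinl2}--\ref{L2Dv}), not the other way around.

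There is a second gap. The $L^2$ machinery above requires the qualitative smallness $\|\mu(t)\|_\infty+\|Dv(t)\|_\infty\le\Theta$, but the hypotheses of the theorem give you only $\|\mu\|_\infty$ small and $\|v_T\|_{W^{1,\infty}}$ small. The paper fills this in (Lemmata~\ref{lemv}--\ref{lemDw}) by a comparison argument via probability flows for $\|v\|_\infty$, and a Cole--Hopf transform $w=e^{-v/2}$ together with heat-kernel gradient estimates for $\|Dv\|_\infty$; your proposal does not address this step. Finally, the $L^2\!\to\!L^\infty$ upgrade on $\mu$ is done by Duhamel with explicit heat-kernel bounds (Lemma~\ref{app1}), interpolating $\|\mu\|_{L^n}$ and $\|\bar m Dv\|_{L^{n+1}}$ against the weighted-$L^2$ decay; the exponent $1/(n+1)$ comes from this interpolation, not from Moser iteration.
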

	

	This estimate is fundamental to obtain the main result, that shows (via a fixed point argument) the existence of \textit{at least} one dynamic solution satisfying the (local) exponential turnpike property around $(\bar u,\bar m)$, provided that the initial / terminal conditions are close enough to the stationary state. We state and prove the theorem in the simpler case where $\Omega = \T^n$, to avoid some non-compactness issues.
		\begin{thm} \label{existenceintro}
	Let $\Omega=\T^n$. Suppose that $(\bar u, \bar m)$ is a (classical) solution to~\eqref{statsys} and that \Sp holds. Then, there exist constants $\bar{\eps} > 0, \gamma > 0, \sigma > 0$ (which depend only on $\bar m, \bar u, \eta, f$ but not on the time horizon $T$), such that, if
	\begin{equation*}
		\delta < \sigma, \quad \| m_0(\cdot)-\bar m(\cdot) \|_{L^{\infty}(\T^n)} + \| u_T(\cdot) - \bar u(\cdot)\|_{W^{1,\infty}(\T^n)} < \gamma,
	\end{equation*}
	then there exists a (classical) solution $(u^T,m^T)$ to~\eqref{dynsys} satisfying the following property:
		\begin{equation*}
		\|m^T(t,\cdot)-\bar m(\cdot)\|_{L^{\infty}(\T^n)} \le \frac{\bar{\eps}}{2} \left( e^{-\sigma_1 t} + e^{-\sigma_2(T-t)} \right) \qquad \forall t \in [0,T],
	\end{equation*}
	where $\sigma_1=\frac{\sigma-\delta}{n+1}$ and $\sigma_2=\frac{\sigma+\delta}{n+1}$.
\end{thm}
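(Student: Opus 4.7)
The plan is to combine a homotopy (continuation) argument in the initial and terminal data with the a priori estimate of Theorem~\ref{apriori}. For $s \in [0,1]$, I would introduce the interpolated data $m_0^{s} = (1-s)\bar m + s m_0$ and $u_T^{s} = (1-s)\bar u + s u_T$, which still satisfy the smallness hypothesis with the same constant $\lambda$. Let $S \subset [0,1]$ denote the set of $s$ for which there exists a classical solution $(u^{s}, m^{s})$ of \eqref{dynsys} with data $(m_0^{s}, u_T^{s})$ satisfying the \emph{strict} turnpike bound
\[
\|m^{s}(t,\cdot) - \bar m\|_{L^\infty(\T^{n})} < \bar\eps\bigl(e^{-\sigma_1 t} + e^{-\sigma_2(T-t)}\bigr) \qquad \forall\, t \in [0,T].
\]
Since the stationary solution $(\bar u,\bar m)$ shows $0 \in S$, it suffices to prove that $S$ is both open and closed in $[0,1]$; then $S=[0,1]$, and the endpoint $s=1$ produces the solution $(u^{T},m^{T})$ we are after.

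For \emph{openness}, at any $s_{0} \in S$ I would linearize \eqref{dynsys} around $(u^{s_0}, m^{s_0})$. The stability assumption \Sp, combined with the symmetry property of the quadratic MFG mentioned in the introduction, yields coercivity for the linearized system at the stationary state $(\bar u, \bar m)$; the closeness of $(u^{s_0}, m^{s_0})$ to $(\bar u, \bar m)$ (provided by parabolic Schauder estimates and the tube bound) transfers this coercivity to the linearization at $(u^{s_0}, m^{s_0})$, making it an isomorphism between suitable parabolic Hölder spaces. An implicit function theorem argument then produces classical solutions $(u^{s}, m^{s})$ for $s$ in a full neighborhood of $s_{0}$, and continuity guarantees these still lie in the strict tube. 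For \emph{closedness}, take $s_{n} \to s_{\infty}$ with $s_{n} \in S$; the uniform bound on $m^{s_n}$ coming from the tube, together with the $C^{2}$ bound on $f$, yield via standard parabolic regularity on $\T^{n}$ a subsequence converging in $C^{1,\alpha} \times C^{2,\alpha}$ to a classical solution $(u^{s_\infty}, m^{s_\infty})$ of \eqref{dynsys} satisfying the \emph{non-strict} turnpike bound with constant $\bar\eps$. Applying Theorem~\ref{apriori} upgrades this constant to $\bar\eps/2$, so the strict bound is restored and $s_{\infty}\in S$.

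The main obstacle — and the reason why Theorem~\ref{apriori} is stated as a strict improvement $\bar\eps \mapsto \bar\eps/2$ rather than a mere uniform bound — is exactly this closedness step: without the gain of a factor two, the turnpike inequality could saturate in the limit and $s_{\infty}$ would escape $S$. A secondary technical point is the verification, along the homotopy, of the integrability hypotheses needed to apply Theorem~\ref{apriori}, in particular the weighted $L^{2}$ norms of $\mu^{s} = m^{s}-\bar m$ and $\sqrt{\bar m}\,(Du^{s}-D\bar u)$ at the endpoints $t=0,T$; on the compact torus these are automatically controlled by the $L^{\infty}$ closeness of the interpolated data together with the uniform positivity of $\bar m$.
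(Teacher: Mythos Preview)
Your continuation argument is a valid alternative, but the paper takes a different route via the Leray--Schauder fixed point theorem. The paper decouples the system by defining $\mathcal{F}(\rho,\nu)=\mu$: given $\rho$ in a weighted-norm ball and $\nu\in[0,1]$, one solves the HJB equation backward with $\nu\rho$ replacing $\mu$ in the coupling term and terminal datum $\nu v_T$, then solves the Fokker--Planck equation forward with initial datum $\nu\mu_0$; fixed points of $\mathcal{F}(\cdot,1)$ are precisely solutions of \eqref{nlsys}. Continuity and compactness of $\mathcal{F}$ on $\T^n$ follow from standard parabolic regularity, $\mathcal{F}(\cdot,0)\equiv 0$ holds trivially, and the a priori improvement of Theorem~\ref{apriori} (checked uniformly in $\nu$, cf.\ Remarks~\ref{nurmk2} and~\ref{nurmk3}) forces every fixed point strictly inside the ball of radius $\bar\eps$. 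The chief advantage of the paper's approach is that Leray--Schauder requires only these soft ingredients, whereas your openness step demands that the linearized coupled forward--backward system at $(u^{s_0},m^{s_0})$ be an isomorphism between parabolic H\"older spaces --- a nontrivial piece of analysis (Fredholmness of the forward--backward problem, then injectivity from the estimates) that the paper never has to perform and that \Sp by itself does not immediately supply. Your closedness step, by contrast, is essentially the same $\bar\eps\mapsto\bar\eps/2$ mechanism the paper exploits to keep fixed points off the boundary.
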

	
	The (local) turnpike property of $Du$ to $\bar Du$ is also obtained, in the weighted-$L^2$ sense (see Theorem \ref{existence} below). Note that a bootstrap argument would allow to improve the $L^2$ sense into the $L^\infty$ one, and then one could propagate the turnpike at the level of the value function $u$; this further step would be rather technical, and it will not be detailed here. Note that the existence of at least one trajectory that spends most of the time close to $(\bar u, \bar m)$ is the best that one can expect in general, and it is sometimes false that all trajectories satisfying the initial/final conditions enjoy that property, see for example the discussion at the end of Remark \ref{localLLrmk}; the presence of heteroclinic connections can also give rise to the existence of trajectories that are arbitrarily close to a stationary state at times $0,T$, but spend $T/2$ time close to another stationary state, see for example \cite{CC21}.
	
	Finally, we can pass to the infinite-horizon $T\to\infty$ limit to produce a dynamic equilibrium on the time interval $[0,\infty)$, and having $(\bar u, \bar m)$ as a long-time limit. This result is given by Theorem \ref{exinfhor}. 
	
	\smallskip
	
	The paper is structured as follows. The rest of the introduction contains a heuristic description of the method and several remarks on \Sp. Section \ref{nlturnl2} is devoted to the proof of the crucial a priori turnpike estimates in $L^2$, and these are improved to $L^\infty$ in Section \ref{nlturnlinf}. Finally, in Section \ref{fix2} the existence of solutions satisfying the turnpike property is proven. In the Appendix, some precise estimates on solutions to linear equations in divergence form are given.
		
\subsection{Discussion on the method}\label{sdisc}

Consider systems~\eqref{dynsys} and~\eqref{statsys}, with solutions $(m,u)$ and $(\bar{m},\bar{u})$, respectively. The approach developed here is of perturbative nature, namely, it revolves around the analysis of the couple
	\[
	\mu:=m - \bar{m} \qquad \text{and} \qquad v:=u-\bar{u},
	\]
	that solves the PDE system
	\begin{equation*}
		\begin{cases}
			-\partial_t v - \Delta v + \langle Dv,D\bar{u} \rangle + \delta v = f(x,\bar{m}+\mu) - f(x,\bar{m}) - \frac{|Dv|^2}{2},& \text{ on } (0,T) \times \Omega \\
			\partial_t \mu - \Delta \mu - \text{div}(\bar{m}Dv) - \text{div}(\mu D\bar{u}) = \text{div}(\mu Dv),& \text{ on } (0,T) \times \Omega \\
			\mu(0,\cdot)=\mu_0(\cdot)=m_0(\cdot) - \bar{m}(\cdot) \qquad v(T,\cdot)=v_T(\cdot) = u_T(\cdot)- \bar{u}(\cdot),& \text{ on } \Omega.
		\end{cases}
	\end{equation*}
 Consider now its linearization :
	\begin{equation*}
		\begin{cases}
			\partial_t \mu = \Delta \mu + \text{div}(\bar{m}Dv) + \text{div}(\mu D\bar{u}), \\
			\partial_t v = - \Delta v + \langle Dv,D\bar{u} \rangle - f_m(x,\bar{m})\mu + \delta v. \\
		\end{cases}
	\end{equation*}
	Our main purpose is to understand the stability properties of the equilibrium state $(0,0)$. Since we want to write the previous problem as a Hamiltonian system, consider first the exponential transformation
	\begin{equation*}
		\begin{aligned}
			&\tilde{\mu}(t,x) := \mu(t,x) e^{-\frac{\delta t}{2}} \\ 
			&\tilde{v}(t,x) := v(t,x) e^{-\frac{\delta t}{2}},
		\end{aligned}
	\end{equation*}
	that satisfy the following system of linear PDEs:
	\begin{equation}
		\begin{cases}
			\partial_t \tilde{\mu} = \Delta \tilde{\mu} + \text{div}(\tilde{\mu} D\bar{u}) + \text{div}(\bar{m} D\tilde{v}) - \frac{\delta}{2} \tilde{\mu}, \\
			\partial_t \tilde{v} = - \Delta \tilde{v} + \langle D\tilde{v}, D \bar{u} \rangle - f_m(x,\bar{m}) \tilde{\mu} + \frac{\delta}{2} \tilde{v}, \\ 
		\end{cases}
		\label{deltahalf}
	\end{equation}
	and that can be rewritten in operator form as follows
	\begin{equation*}
		\begin{bmatrix}
			\dot{\tilde{\mu}} \\
			\dot{\tilde{v}}	
		\end{bmatrix}
		=
		\begin{bmatrix}
			A-\frac{\delta}{2} I & -BB^{*} \\
			-Q & -A^{*} + \frac{\delta}{2} I \\
		\end{bmatrix}
		\begin{bmatrix}
			\tilde{\mu} \\
			\tilde{v}
		\end{bmatrix}
		,
	\end{equation*}
	where
	\begin{align*}
		& A h = \Delta h + \text{div}(h D\bar{u}),\\
		& A^{*} h = \Delta h - \langle Dh,D\bar{u} \rangle,\\
		& BB^{*} h = -\text{div}(\bar{m}Dh),\\
		& Qh = f_m(x,\bar{m}) h.
	\end{align*}
	Notice that the operator $h \mapsto -{\rm div}(\bar{m}Dh)$ is self-adjoint and positive, hence it can be written in the form $BB^{*}$, where $Bf = -{\rm div}(\sqrt{\bar m}f)$. Being a strictly elliptic operator, we may think of its inverse\footnote{note that the invertibility $BB^*$ is sufficient for the (linear) controllability of the system.} (once a normalization condition is introduced). Note also that $Q$ is symmetric. It is well known that, for the Hamiltonian matrix 
	\begin{equation*}
		M=
		\begin{bmatrix}
			A-\frac{\delta}{2} I & -BB^{*} \\
			-Q & -A^{*} + \frac{\delta}{2} I \\
		\end{bmatrix}
	\end{equation*}
	if $\lambda \in \mathbb{C}$ is one of its eigenvalues, then $\lambda^{*}, - \lambda, -\lambda^{*}$ are eigenvalues, too.
	
	Now, linear stability of $(0,0)$, and in particular the turnpike behavior close to it, comes from its hyperbolic nature: we need to prevent $M$ from having purely imaginary eigenvalues, that is, if $\omega$ is any non zero real number, then the operators matrix
	\begin{equation*}
		\begin{bmatrix}
			A-\frac{\delta}{2} I -i\omega I & -BB^{*} \\
			-Q & -A^{*} +\frac{\delta}{2} I - i\omega I \\
		\end{bmatrix}
	\end{equation*}
	must have trivial kernel. So, let $(\bar{\mu},\bar{v})$ be such that 
	\begin{equation*}
		\begin{bmatrix}
			A-\frac{\delta}{2} I -i\omega I & -BB^{*} \\
			-Q & -A^{*} +\frac{\delta}{2} I - i\omega I \\
		\end{bmatrix}
		\begin{bmatrix}
			\bar{\mu} \\
			\bar{v}
		\end{bmatrix}
		=
		\begin{bmatrix}
			0 \\
			0 \\
		\end{bmatrix}
		,
	\end{equation*}
	we have to verify that the only solution be $(\bar{\mu},\bar{v})=(0,0)$. From the first row, we derive that $\bar{v}=(BB^{*})^{-1} (A-\frac{\delta}{2} I - i\omega I ) \bar{\mu}$, from which 
	\begin{multline*}
		\left \{ Q+A^{*}(BB^{*})^{-1}A - \frac{\delta}{2} \left( A^{*} (BB^{*})^{-1} + (BB^{*})^{-1} A \right) + i\omega \left[ (BB^{*})^{-1}A - A^{*}(BB^{*})^{-1} \right] + \right. \\ \left. + \left( \frac{\delta^2}{4} + \omega^2 \right) (BB^{*})^{-1} \right \} \bar{\mu} = 0.
	\end{multline*}
	Since $BB^{*} > 0$, to guarantee that the only solution be $\bar{\mu}=0$ we may assume that
	\begin{itemize}
		\item[\textit{(i)}] $Q+A^{*}(BB^{*})^{-1} A - \frac{\delta}{2} \left( A^{*} (BB^{*})^{-1} + (BB^{*})^{-1} A \right)  \geq 0$ 
		\item[\textit{(ii)}] $A^{*} (BB^{*})^{-1} = (BB^{*})^{-1} A$ 
	\end{itemize}
	
	Let us first show that \textit{(ii)} is a symmetry property that is clear within the quadratic Hamiltonian framework by identity $D \bar u = - D \bar m / \bar m$. Indeed, 
	note first that,
	\begin{equation*}
		(BB^{*}) f = - \text{div}(\bar{m}Df) = -\langle D\bar{m},Df \rangle -\bar{m} \Delta f =  \bar{m} \langle D\bar{u},Df \rangle - \bar{m} \Delta f = -\bar{m} A^{*} f = -T_{\bar{m}} A^{*} f,
	\end{equation*}
	where $T_{\bar{m}}$ is the multiplication operator by $\bar{m}$. Similarly,
	\begin{equation}\label{bbking}
		(BB^{*}) f = - \text{div}(\bar{m}Df) =- \text{div}(D \bar m f + \bar{m}Df + \bar m f D \bar u) = - \Delta(\bar m f)- \text{div}( \bar m f D \bar u) =  -A T_{\bar{m}}  f,
	\end{equation}
	hence,
	\[
	(BB^{*}) A^* = - A T_{\bar{m}} A^* = A (BB^{*}).
	\]
	
	To better understand \textit{(i)}, observe first that\footnote{This observation appears also in \cite{CLLP2}.}, by \eqref{bbking},
	\[
	(BB^{*})^{-1} A \mu = -\frac\mu{\bar m}
	\]
	and again by  $D \bar u = - D \bar m / \bar m$, 
	\begin{equation*}
		A^{*} (BB^{*})^{-1} A \mu = - \Delta \frac\mu{\bar m} + \langle D\frac\mu{\bar m},D\bar{u} \rangle
		= -\frac{1}{\bar{m}} \text{div} \left( \bar{m} D\left( \frac{\mu}{\bar{m}} \right) \right).
	\end{equation*}
%
%
%
	Recalling the definition of $Q$, we put all together and we rewrite \textit{(i)} (using the $L^2$ scalar product and integrating by parts) as
	\begin{equation*}
		\int_{\Omega} f_m(x,\bar{m})\mu^2 + \bar{m} \left| D \left( \frac{\mu}{\bar{m}} \right) \right|^2 +\delta \frac{\mu^2}{\bar{m}} \di x \geq 0.
	\end{equation*}
This inequality is closely related to \Sp: in fact \Sp is a strict version of it. We will show below why the strict version of inequality is needed.
	
	\medskip

	We are going to elaborate more on \Sp in the next subsection. Before we do that, we continue our discussion on the (linear) stability argument, assuming for simplicity that $\delta = 0$. For comparison purposes, we start with a brief description of the viewpoint adopted in \cite{ZP} (which is also common in the MFG literature, see for example \cite{CLLP1, CLLP2}), that looks at the evolution of the state/adjoint-state quantity
	\[
	\Psi(t) = \langle\mu(t), v(t)\rangle.
	\]
	Using the equations \eqref{deltahalf}, ($\tilde \mu = \mu$ and $\tilde v = v$ since $\delta = 0$) 
	\[
	\dot{\Psi}(t) = - \langle (BB^*)v(t), v(t)\rangle - \langle \mu(t), Q\mu(t)\rangle
	\]
	Then, by the Cauchy-Schwarz and Young inequalities, assuming that both $BB^*$ and $Q$ are \textit{positive definite}, there exists $c > 0$ such that
	\begin{equation}\label{phiineq}
	\dot{\Psi}(t) \le - c |\Psi(t)|.
	\end{equation}
	This differential inequality is the core of the exponential stability: once integrated over time, it gives the exponential decay in time that is typical of the turnpike property. In other words, $\Psi(t)$ can be regarded as a \textit{Lyapunov function}.
	
	A fundamental observation here is that assuming that \textit{$Q$ is positive definite} in our setting amounts to ask that $f_m \ge \bar c > 0$, that is exactly requiring the so-called Lasry-Lions monotonicity condition. Note that this assumption is required in several works on the turnpike property in optimal control \cite{ZP, TZh, TZZ, TZ14}, and forces the functionals $\mathcal F$, $\overline{\mathcal F}$ described in the introduction to be optimized to be uniformly convex (at least close to the stationary point).
	
	\smallskip
	
	The main purpose of this paper is to address situations where such monotonicity / convexity fails. Since $Q \not> 0$, we need to take a different route, starting from the identity
	\[
	-(BB^*)^{-1}\dot{\mu} = v - (BB^*)^{-1}A \mu.
	\]
	Letting
	\[
	\Phi(t) = \langle\mu(t), v(t) - (BB^*)^{-1}A \mu(t) \rangle,
	\]
	We have
	\[
	\dot{ \Phi} = -\langle\dot{\mu},(BB^*)^{-1}\dot{\mu} \rangle + \langle\mu, -Q\mu - A^* v - (BB^*)^{-1}A (A\mu - BB^*v)\rangle.
	\]
	Using the symmetry property \textit{(ii)},
	\begin{equation}\label{phidot}
	\dot{ \Phi} = -\langle\dot{\mu},(BB^*)^{-1}\dot{\mu} \rangle - \langle\mu, Q\mu\rangle  -\langle \mu, A^* (BB^*)^{-1}A \mu \rangle,
	\end{equation}
	which gives, assuming \textit{(i)},
	\[
	\frac{\di}{\di t} \langle\mu, (BB^*)^{-1}\dot{\mu} \rangle = -\frac{\di}{\di t}   \Phi \ge \langle\dot{\mu},(BB^*)^{-1}\dot{\mu} \rangle.
	\]
	It should be clear that it is not possible at this stage to derive an inequality of the form \eqref{phiineq}, that is to control the right-hand side from below by $c|\Phi(t)|$; to this purpose a stronger form of \textit{(i)} is required, for instance ensuring the presence of further term proportional to $\langle \mu,(BB^*)^{-1} \mu \rangle$ in the right-hand side. By the structure of our problem, it will be in fact more natural to assume that there exists $\eta > 0$ such that
	\[
	Q+A^{*}(BB^{*})^{-1} A  \geq \eta A^{*}(BB^{*})^{-1} A,
	\]
	which is exactly the main requirement in \Sp. This allows to reach the inequality
	\[
	- \dot{ \Phi} (t) \ge c|\Phi(t)|,
	\]
	that, arguing as above for $\Psi$, gives the exponential stability.
%

	As the reader will see (Proposition \ref{phinl1} below), there will be no explicit use of the operators $A, BB^*$... below, though all the arguments are clearly inspired by the previous discussion. 
	Turning indeed to the explicit form of the operator $(BB^*)^{-1}A$ obtained above, one should look at the evolution of (now for general $\delta \ge 0$)
	\begin{equation*}
		\tilde{\Phi}(t) := \int_{\Omega} e^{-\delta t} \left( \mu(t,x) v(t,x) + \frac{\mu^2(t,x)}{\bar{m}(x)} \right) \di x.
	\end{equation*}
	Following the previous computations we obtain
	\begin{equation}
			\dot{\tilde{\Phi}}(t) = - \int_{\Omega} e^{-\delta t} \left[ \underbrace{f_{m}(x,\bar{m}) \mu^2 + \bar{m}\left| D\left( \frac{\mu}{\bar{m}} \right) \right|^2 + \frac{\delta \mu^2}{\bar{m}}}_{\text{stationary}} + \underbrace{\bar{m}\left|Dv + D\left( \frac{\mu}{\bar{m}} \right) \right|^2}_{\text{dynamic}} \right] \di x.
		\label{dotphi}
	\end{equation}
	It is very convenient to recognize here the splitting between the ``stationary'' component and the ``dynamic'' one, as it clarifies the role of the \textit{static} optimization problem in the dynamic one. Once again, the second term is related to the dynamic behavior of the system for the following reason: observe that
		\begin{equation*}
			\Delta \mu + \text{div}(\mu D\bar{u}) = \text{div}(D\mu + \mu D\bar{u}) = \text{div}\left( \bar{m} D \left( \frac{\mu}{\bar{m}} \right) \right).
		\end{equation*}
		Therefore, the (linearized) equation for $\mu$ can be rewritten as
		\begin{equation*}
			\partial_t \mu - \text{div}(\bar{m}Dv) - \text{div}\left( \bar{m} D \left( \frac{\mu}{\bar{m}} \right) \right) = 0 \Longrightarrow \partial_t \mu = \text{div}\left(\bar{m}\left( Dv+D \left( \frac{\mu}{\bar{m}} \right) \right) \right),
		\end{equation*}
		hence the vector field $Dv + D \left( \frac{\mu}{\bar{m}} \right)$, appearing in the ``dynamic'' part of \eqref{dotphi}, drives in fact the evolution of $\mu$. 
	
	Note that we have focused here on linear stability only. In the general, nonlinear case, the proof of the exponential decay of the quantity $\Phi$ goes along similar lines, but one needs to handle the presence of lower order terms. There is also some technical work that is needed to derive decay bounds on weighted $L^2$ norms, then on $L^\infty$ norms. Moreover, the presence of $\delta > 0$ also needs to be handled with care; in a sense, this is a further perturbation of the problem. Indeed, the previous argument gives the exponential decay of $\tilde \Phi(t) = e^{-\delta t} \Phi(t)$. To recover the exponential decay of $\Phi(t)$, one then needs $\delta$ to be small enough, since the exponential decay for $\Phi$ from the one of $\tilde \Phi$ takes the form
\[
e^{-(\sigma-\delta) t} +  e^{-(\sigma+\delta) (T-t)}.
\]

There is another, elegant approach that allows to turn the linear exponential stability into the nonlinear one, that relies on the analysis of the Riccati feedback operator $\mathcal E(T) \mu_0 = \tilde v (0)$, where $(\tilde v, \tilde \mu)$ is the solution to the linear system \eqref{deltahalf} with final condition $\tilde v(T) = 0$ and initial condition $\tilde \mu(0) = \mu_0$. As shown in \cite{PorrettaMinMax}, the exponential convergence as $T\to\infty$ of $\mathcal E(T)$ is all that one needs to get local, nonlinear long time stability of the stationary state; such exponential convergence is proven in \cite{PorrettaMinMax} using the classical Lasry-Lions monotonicity argument; the approach proposed here allows in fact to show that exponential convergence of $\mathcal E(T)$ holds also under the weaker property \Sp.

%

\subsection{The assumption \Sp}

We discuss in this section the assumption \Sp. 

First, the identity $D\bar m = -\bar m D\bar u$ is true if $\bar m(x)$ is proportional to $e^{-\bar u(x)}$: this ansatz gives in fact a solution to the stationary Fokker-Planck equation, so any given solution $\bar m$ is actually proportional to $e^{-\bar u}$ provided that uniqueness for the Fokker-Planck equation holds. This uniqueness issue has been largely explored, and it holds under rather mild integrability assumptions, for instance if $D \bar u \in L^2(\bar m)$, see \cite[Example 5.5.4]{BKRS} (we refer to the same monograph for a thorough discussion on uniqueness).

Similarly, there is a large body of literature on the Poincar\'e inequality for $\bar m$ if $\Omega$ is bounded (that goes under the name of weighted Poincar\'e inequality); this is valid for example if $\bar m$ is regular enough and bounded away from zero (which is our case, by elliptic regularity and the strong maximum principle). When $\Omega = \R^n$, an additional care is needed: the inequality is somehow related also to the behavior of $\bar m$ as $x \to \infty$, and it is true in many situations, see for example \cite{bakry}. 

\smallskip

The main restriction in \Sp is the inequality \eqref{T3}. We first observe that, under mild assumptions on $f$, such inequality has equivalent formulations. 

\begin{lem}[Equivalent formulations of \Sp]\label{abc}
	Assume that $\bar m, f_m (\cdot,\bar{m}(\cdot)) \in L^\infty(\Omega)$, and that $\bar m$ satisfies a Poincar\'e inequality. Then, the following properties are equivalent :
	\begin{itemize}
		\item [(a)] $\int_{\Omega} f_m (x,\bar{m}) \mu^2 + \bar{m} \left| D\left( \frac{\mu}{\bar{m}} \right) \right|^2 + \delta \frac{\mu^2}{\bar{m}} \di x \geq \eta \int_{\Omega} \bar{m} \left| D\left( \frac{\mu}{\bar{m}} \right) \right|^2 \di x;$
		\item [(b)] $\int_{\Omega} f_m(x,\bar{m}) \mu^2 + \bar{m} \left| D\left( \frac{\mu}{\bar{m}} \right) \right|^2 + \delta \frac{\mu^2}{\bar{m}} \di x \geq \eta' \int_{\Omega} \frac{\mu^2}{\bar{m}} \di x;$
		\item [(c)] $\int_{\Omega} f_m(x,\bar{m}) \mu^2 + \bar{m} \left| D\left( \frac{\mu}{\bar{m}} \right) \right|^2 + \delta \frac{\mu^2}{\bar{m}} \di x \geq \eta'' \int_{\Omega} \mu^2 \di x,$
	\end{itemize}
	for some $\eta, \eta', \eta'' > 0$ and for all $\mu \in L^1(\Omega) \cap L^\infty(\Omega)$ such that $\int_{\Omega} \mu(x) \di x = 0$, $\mu/\bar m \in  W^{1,2}(\bar m \di x)$.
\end{lem}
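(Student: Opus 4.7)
The plan is to close the cyclic chain of implications $(a) \Rightarrow (b) \Rightarrow (c) \Rightarrow (a)$. The first two implications are immediate; the last is the only step requiring a small absorption argument, and the hypotheses $\bar m \in L^\infty(\Omega)$ and $f_m(\cdot,\bar m) \in L^\infty(\Omega)$ enter precisely there.

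For $(a) \Rightarrow (b)$, I would apply the weighted Poincar\'e inequality to $g := \mu/\bar m$; this function lies in $W^{1,2}(\bar m\, \di x)$ by hypothesis, and satisfies $\int_\Omega g\, \bar m\, \di x = \int_\Omega \mu\, \di x = 0$. This gives
\[
\int_\Omega \frac{\mu^2}{\bar m}\, \di x \le C_P \int_\Omega \bar m \Bigl|D\Bigl(\tfrac{\mu}{\bar m}\Bigr)\Bigr|^2 \di x,
\]
and combining with $(a)$ yields $(b)$ with $\eta' = \eta/C_P$. For $(b) \Rightarrow (c)$, I would simply use $\bar m \le \|\bar m\|_{L^\infty}$ to obtain $\int_\Omega \mu^2\, \di x \le \|\bar m\|_{L^\infty} \int_\Omega \mu^2/\bar m\, \di x$, so that $(c)$ holds with $\eta'' = \eta'/\|\bar m\|_{L^\infty}$.

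The main (though still short) step is $(c) \Rightarrow (a)$. Denoting the common left-hand side by $\Psi(\mu)$, I would write $\Psi(\mu) = \theta\, \Psi(\mu) + (1-\theta)\, \Psi(\mu)$ for some $\theta \in (0,1)$ to be chosen. To the first summand I apply hypothesis $(c)$, obtaining $\theta\, \Psi(\mu) \ge \theta \eta'' \int_\Omega \mu^2\, \di x$; in the second summand I discard the nonnegative term $(1-\theta)\delta \int_\Omega \mu^2/\bar m\, \di x$ and estimate the indefinite part through the $L^\infty$ bound on $f_m(\cdot,\bar m)$, namely $(1-\theta) \int_\Omega f_m(x,\bar m)\mu^2\, \di x \ge -(1-\theta)\|f_m(\cdot,\bar m)\|_{L^\infty} \int_\Omega \mu^2\, \di x$. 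Adding these gives
\[
\Psi(\mu) \ge (1-\theta) \int_\Omega \bar m \Bigl|D\Bigl(\tfrac{\mu}{\bar m}\Bigr)\Bigr|^2 \di x + \bigl[\theta \eta'' - (1-\theta)\|f_m(\cdot,\bar m)\|_{L^\infty}\bigr] \int_\Omega \mu^2\, \di x,
\]
and the choice $\theta = \|f_m(\cdot,\bar m)\|_{L^\infty}/(\eta'' + \|f_m(\cdot,\bar m)\|_{L^\infty})$ makes the $\int_\Omega \mu^2$ coefficient vanish, delivering $(a)$ with $\eta = \eta''/(\eta'' + \|f_m(\cdot,\bar m)\|_{L^\infty})$. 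No step presents a real obstacle, and the Poincar\'e hypothesis is used only in the easy direction $(a)\Rightarrow(b)$.
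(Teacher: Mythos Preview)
Your proof is correct and follows essentially the same route as the paper: the same cyclic chain $(a)\Rightarrow(b)\Rightarrow(c)\Rightarrow(a)$, the same use of the weighted Poincar\'e inequality and the bound $\bar m\le\|\bar m\|_{L^\infty}$ in the easy directions, and the same convex-splitting absorption argument for $(c)\Rightarrow(a)$ (your $\theta$ is the paper's $1-\eps$), yielding the identical constant $\eta=\eta''/(\eta''+\|f_m(\cdot,\bar m)\|_{L^\infty})$.
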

\begin{proof}
	Suppose that (a) holds. The first implication is a simple application of the weighted-Poincar\'e inequality for $\frac{\mu}{\bar{m}}$. In fact, we know that
	\begin{equation*}
		\int_{\Omega} \left( \frac{\mu}{\bar{m}} \right) \bar{m} \, \di x =  \int_{\Omega} \mu \, \di x = 0,
	\end{equation*}
	hence
	\begin{multline*}
		\int_{\Omega} f_m(x,\bar{m}) \mu^2 + \bar{m} \left| D\left( \frac{\mu}{\bar{m}} \right) \right|^2 + \delta \frac{\mu^2}{\bar{m}} \di x \overset{(a)}{\geq} \eta \int_{\Omega} \bar{m} \left| D\left( \frac{\mu}{\bar{m}} \right) \right|^2 \di x \\ \geq \frac{\eta}{C_P} \int_{\Omega} \bar{m} \frac{\mu^2}{\bar{m}^2} \di x = \frac{\eta}{C_P} \int_{\Omega} \frac{\mu^2}{\bar{m}} \di x,
	\end{multline*}
	where $C_P$ is the Poincar\'e constant of $\bar{m}$. Therefore, (b) holds with $\eta'=\frac{\eta}{C_P}$. 
	
	Next, let (b) hold. Since $\bar{m}$ is bounded from above and non-negative, we have that
	\begin{equation*}
		\eta' \int_{\Omega} \frac{\mu^2}{\bar{m}} \di x 
		\geq \frac{\eta'}{\|\bar{m}\|_{L^{\infty}(\Omega)}} \int_{\Omega} \mu^2 \di x,
	\end{equation*}
	hence (c) holds with $\eta''=\frac{\eta'}{\| \bar{m} \|_{L^{\infty}}}$.
	
	Finally, let (c) hold. Take $\eps \in (0,1)$. We can split the left hand side of (c) into two pieces, as follows
	\begin{multline*}
			\int_{\Omega} f_m(x,\bar{m}) \mu^2 +  \bar{m} \left| D\left( \frac{\mu}{\bar{m}} \right) \right|^2 + \delta \frac{\mu^2}{\bar{m}} \di x \\=(1-\eps) \int_{\Omega} f_m(x,\bar{m}) \mu^2 +  \bar{m} \left| D\left( \frac{\mu}{\bar{m}} \right) \right|^2 + \delta \frac{\mu^2}{\bar{m}} \di x + \\+  \eps \int_{\Omega} f_m (x,\bar{m}) \mu^2 +  \bar{m} \left| D\left( \frac{\mu}{\bar{m}} \right) \right|^2 + \delta \frac{\mu^2}{\bar{m}} \di x \geq
			\\\geq (1-\eps)\eta'' \int_{\Omega} \mu^2 \di x + \eps \int_{\Omega} f_m(x,\bar{m}) \mu^2 +  \bar{m} \left| D\left( \frac{\mu}{\bar{m}} \right) \right|^2 + \delta \frac{\mu^2}{\bar{m}} \di x=\\=\int_{\Omega} \left[ (1-\eps)\eta'' + \eps f_m(x,\bar{m}) \right] \mu^2 \di x + \eps \int_{\Omega} \bar{m} \left| D\left( \frac{\mu}{\bar{m}} \right) \right|^2 + \delta \frac{\mu^2}{\bar{m}} \di x \geq \\ \geq \int_{\Omega} \left[ (1-\eps)\eta'' - \eps \|f_m(\cdot,\bar{m})\|_{L^{\infty}(\Omega)} \right] \mu^2 \di x + \eps \int_{\Omega} \bar{m} \left| D\left( \frac{\mu}{\bar{m}} \right) \right|^2 + \delta \frac{\mu^2}{\bar{m}} \di x = \blacktriangle
	\end{multline*}
	Now, choosing $\eps=\frac{\eta''}{\eta'' + \| f_m(\cdot,\bar{m})\|_{L^{\infty}(\Omega)}}$, we have
	$$(1-\eps)\eta'' - \eps \| f_m(\cdot,\bar{m})\|_{L^{\infty}(\Omega)} = 0.
	$$
	Hence, we proceed from $\blacktriangle$ to get
	\begin{equation*}
		\blacktriangle \geq \frac{\eta''}{\eta'' + \|f_m(\cdot,\bar{m})\|_{L^{\infty}(\Omega)}} \int_{\Omega} \bar{m} \left| D \left( \frac{\mu}{\bar{m}} \right) \right|^2 \di x,
	\end{equation*}
	so that (a) holds with $\eta:=\frac{\eta''}{\eta'' + \|f_m(\cdot,\bar{m})\|_{L^{\infty}(\Omega)}}$.
\end{proof}

\begin{rmk}[The principal eigenvalue]\label{firsteigrmk} The inequalities (a), (b), (c) of the previous lemma can be interpreted as bounds from below of certain quotients of Rayleigh type. For instance (b) reads as
\[
\inf_{\int_\Omega \mu = 0}\frac{\int_{\Omega} f_m(x,\bar{m}) \mu^2 + \bar{m} \left| D\left( \frac{\mu}{\bar{m}} \right) \right|^2 + \delta \frac{\mu^2}{\bar{m}} \di x}{\int_{\Omega} \frac{\mu^2}{\bar{m}} \di x} \geq \eta' > 0.
\]
If such infimum is attained at some $\mu$, then (formally) the following equality holds
\begin{equation}\label{raylinear}
f_m(x,\bar{m}) \mu - \frac1{\bar m}{\rm div}\left( \bar{m} D\left( \frac{\mu}{\bar{m}} \right) \right) + \delta \frac{\mu}{\bar{m}} = \eta_1 \frac{\mu}{\bar{m}} + \ell,
\end{equation}
where $\eta_1\ge \eta'$ and $\ell \in \R$ ($\ell$ can be characterized in terms of $\mu, \bar m$). Via the change of variables $v = -\frac \mu {\bar m}$ and the identity $D \bar u = - \frac{D \bar m}{\bar m}$ one finds the PDE system solved by the triple $(v, \mu, \ell)$ :
\begin{equation}\label{firsteig}
\begin{cases}
	- \Delta v + \langle Dv,D\bar{u} \rangle + \delta v - f_m(x,\bar{m})\mu = \eta_1 v + \ell \\
	\Delta \mu + \text{div}(\bar{m}Dv) + \text{div}(\mu D\bar{u}) = 0, \\ \int_\Omega \mu = 0.
\end{cases}
\end{equation}
So, another way of seeing \Sp is that 
\begin{align*}
&\textit{the first (or principal) eigenvalue $\eta_1$, that is the smallest number $\eta_1$} \\ & \quad \textit{such that \eqref{firsteig} has nontrivial solutions, must be positive.}
\end{align*}
Arguing similarly with the inequality (c) yields an eigenvalue problem of the form
\[
\begin{cases}
	- \Delta v + \langle Dv,D\bar{u} \rangle + \delta v - f_m(x,\bar{m})\mu = -\hat \eta_1 \mu + \ell \\
	\Delta \mu + \text{div}(\bar{m}Dv) + \text{div}(\mu D\bar{u}) = 0, \\ \int_\Omega \mu = 0.
\end{cases}
\]
Though all considerations are made here in a merely formal way, it would be interesting to develop a systematic spectral theory for the present problems.
\end{rmk}

\begin{rmk}[Non-local couplings]\label{nonlocrem} The methods proposed in this work can be adapted easily to problems involving non-local couplings, that is, when $f : \Omega \times \R \to \R$ is replaced by $f : \Omega \times \mathcal P (\Omega) \to \R$, where $\mathcal P (\Omega)$ is the space of probability measures over $\Omega$, of course under suitable regularity assumptions on $f$. Indeed, the approach relies on linearization arguments, that, in the local case, involve expansions of the form
\[
f(x, \bar{m}(x)+\mu(x)) - f(x,\bar{m}(x)) \simeq f_m(x,\bar m(x)) \mu(x) .
\]
In the non-local case, one would work with the expansion
\[
f(x, \bar{m}+\mu) - f(x,\bar{m}) \simeq \int_\Omega \delta_m f(x,\bar m,y) \mu(y) \di y,
\]
where $\delta_m f$ is the so-called flat derivative. In particular the main inequality in assumption \Sp would take the form
\[
	\int_{\Omega}\int_{\Omega} \delta_m f(x,\bar{m},y) \mu(y) \mu(x) \di y \di x +  \int_{\Omega} \bar m\left| D \left( \frac{\mu}{\bar{m}} \right) \right|^2 + \frac{\delta \mu^2}{\bar{m}} \, \di x \geq \eta \int_{\Omega} \bar m\left| D \left( \frac{\mu}{\bar{m}} \right) \right|^2 \di x.
\]
\end{rmk}

We now point out that \eqref{T3} can be also related to local minimality properties of $\bar m$ for a static optimization problem, at least when $\delta = 0$. This is motivated by the fact, already mentioned in the introduction, that the stationary PDE system \eqref{statsys} constitutes the optimality conditions of the minimization of the functional $\overline{\mathcal{F}}(m,w)$ under the constraints $-\Delta m + \text{div}(m w)=0$ and $\int_{\Omega} m \, \di x=1$. In the quadratic case, it is possible to further simplify the static optimization problem by introducing a functional $\widetilde{\mathcal{F}}$, which depends only on the variable $m$, defined as follows
\begin{equation*}
	\widetilde{\mathcal{F}}(m):=\int_{\Omega} F(x,m) + \frac{|Dm|^2}{2m} \di x = \int_{\Omega} F(x,m) + 2 |D\sqrt{m}|^2 \di x, \qquad \forall \sqrt{m} \in W^{1,2}(\Omega), \int_{\Omega} m =1.
\end{equation*}
The next Lemma explains that \eqref{T3} holds whenever $\bar m$ is a strict (local) minimum of $\widetilde{\mathcal{F}}$.

\begin{lem}
	If $\bar{m}$ is a minimum for $\widetilde{\mathcal{F}}$ such that $\bar m$ is continuous and everywhere positive, then \eqref{T3} holds with $\eta = 0$. If, moreover, there exists $\eta'' > 0$ such that
	\begin{equation} 
		\widetilde{\mathcal{F}}(m) - \widetilde{\mathcal{F}}(\bar{m}) \geq \frac{\eta''}{2} \int_{\Omega} (m-\bar{m})^2 \di x \qquad \forall \sqrt m \in W^{1,2}(\Omega),
		\label{coerci}
	\end{equation}
	then assertion (c) of Lemma \ref{abc} holds (hence \eqref{T3} holds for some $\eta > 0$).
\end{lem}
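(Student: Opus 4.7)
The plan is to identify the left-hand side of \eqref{T3} (with $\delta=0$) with the second-variation quadratic form of $\widetilde{\mathcal{F}}$ at $\bar m$ along zero-mean perturbations, and then to convert the minimality of $\bar m$ (respectively, the quadratic coercivity \eqref{coerci}) into the non-negativity (respectively, strict positivity) of that quadratic form. I will first work with smooth compactly supported perturbations, where all the computations are justified, and then extend the resulting inequalities to the full class in \Sp by density.

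Concretely, fix $\mu\in C_c^\infty(\Omega)$ with $\int_\Omega \mu\,\di x=0$ and set $m_t:=\bar m + t\mu$. Since $\bar m$ is continuous and strictly positive, for $|t|$ sufficiently small we have $m_t>0$, $\int_\Omega m_t\,\di x=1$ and $\sqrt{m_t}\in W^{1,2}(\Omega)$, so $m_t$ is admissible for $\widetilde{\mathcal{F}}$. A pointwise Taylor expansion of the potential term gives
$$\int_\Omega F(x,m_t)\,\di x \;=\; \int_\Omega F(x,\bar m)\,\di x + t\!\int_\Omega f(x,\bar m)\mu\,\di x + \frac{t^2}{2}\!\int_\Omega f_m(x,\bar m)\mu^2\,\di x + o(t^2).$$
For the Fisher information term, I will expand $\tfrac{|D\bar m + tD\mu|^2}{2(\bar m+t\mu)}$ as a power series in $t$; the coefficient of $t^2$ simplifies, by the algebraic identity
$$\frac{|D\mu|^2}{2\bar m} - \frac{\mu\,\langle D\bar m,D\mu\rangle}{\bar m^2} + \frac{\mu^2|D\bar m|^2}{2\bar m^3} \;=\; \frac{\bar m}{2}\left|D\!\left(\frac{\mu}{\bar m}\right)\right|^{2},$$
to $\tfrac{\bar m}{2}|D(\mu/\bar m)|^2$. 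Summing the two contributions,
$$\widetilde{\mathcal{F}}(m_t) - \widetilde{\mathcal{F}}(\bar m) \;=\; t\!\int_\Omega f(x,\bar m)\mu\,\di x \;+\; \frac{t^2}{2}\!\int_\Omega\!\left[f_m(x,\bar m)\mu^2 + \bar m\left|D\!\left(\frac{\mu}{\bar m}\right)\right|^{2}\right]\!\di x + o(t^2).$$

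For the first assertion, minimality of $\bar m$ forces $\widetilde{\mathcal{F}}(m_t) - \widetilde{\mathcal{F}}(\bar m) \ge 0$ for small $|t|$ of either sign; the $t$-linear term must therefore vanish and the $t^2$ coefficient must be non-negative, which is exactly \eqref{T3} with $\delta=0$ and $\eta=0$ for such $\mu$. For the second assertion, inserting $m=m_t$ into \eqref{coerci} yields $\widetilde{\mathcal{F}}(m_t)-\widetilde{\mathcal{F}}(\bar m) \ge \tfrac{\eta''}{2}t^2\int_\Omega \mu^2\,\di x$; dividing by $t^2/2$ and letting $t\to 0$ gives exactly condition (c) of Lemma \ref{abc} with the same constant $\eta''$, and hence \Sp with some $\eta>0$ by Lemma \ref{abc}. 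A standard density argument then extends both inequalities from $C_c^\infty$ to the full class of $\mu$ prescribed in \Sp, using that $\bar m$ is continuous and bounded below by a positive constant on the relevant regions and that all quantities are continuous in the natural norms. The only mildly delicate point I anticipate is the second-variation identity for the Fisher information together with the uniform control of the $o(t^2)$ remainder, but both reduce to the pointwise algebraic rearrangement displayed above and to $\bar m$ being bounded away from zero on the support of $\mu$, so no genuine obstacle is expected.
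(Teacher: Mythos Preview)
Your proposal is correct and follows essentially the same route as the paper: a second-order expansion of $\widetilde{\mathcal{F}}(\bar m + t\mu)$ identifies the quadratic form in \eqref{T3} with the second variation, and minimality (respectively, the coercivity \eqref{coerci}) then yields non-negativity (respectively, Lemma~\ref{abc}(c)). One small slip: your displayed expansion omits the Fisher-information contribution to the $t$-linear term, though this is harmless since the full linear term vanishes by minimality anyway; for the extension step, the paper spells out the cutoff $\mu_R = (\mu - c_R)\phi(\cdot/R)$ with $c_R$ chosen to preserve the zero-mean constraint, which is precisely the ``standard density argument'' you allude to.
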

\begin{proof}
	Take $\mu$ such that $\mu \in L^1(\Omega) \cap L^\infty(\Omega)$, $\int_{\Omega} \mu(x) \di x = 0$, $\mu/\bar m \in  W^{1,2}(\bar m \di x)$. Assume also that it has compact support. We take a small variation around the minimum $\bar{m}$, namely $\bar m + \eps \mu$, for $\eps > 0$ small enough so that $\bar m + \eps \mu > 0$ (recall that $\bar m$ is continuous and everywhere positive, and $\mu \in L^\infty$). Then, we compute variations of $\widetilde{\mathcal{F}}$ as follows
	\begin{multline*}
		\F(\bar m + \eps \mu) = \F(\bar m) + \delta\F(\bar m)[\eps \mu] + \frac12 \delta^2\F(\bar m)[\eps \mu, \eps \mu] \\
		-\eps^3 \int_{\Omega}\left[ \frac{\mu \bar m}{2(\bar m+ \eps \mu)} \left|D\left( \frac \mu{\bar m}\right) \right|^2 - \int_0^1 \frac{(1-s)^2}{2} f_{mm}(x, \bar m + s \eps \mu)\mu^3\di s \right] \di x,
	\end{multline*}
	where
	\begin{align*}
	&\delta\F(\bar m)[\eps \mu] = \eps \int_{\Omega} \frac{\langle D \bar m, D \mu \rangle}{\bar m} - \mu \frac{|D \bar m|^2}{2\bar m^2} + f(x,\bar m) \mu \, \di x, \\
	& \delta^2\F(\bar m)[\eps \mu, \eps \mu] = \eps^2 \int_{\Omega} \bar m \left|D\left( \frac \mu{\bar m}\right) \right|^2 + f_m(x,\bar m) \mu^2 \, \di x.
	\end{align*}

	Since $\bar{m}$ is a minimum of $\F$, then letting $\eps \to 0$ implies that $\delta\F(\bar m)[\mu] = 0$ and $\delta^2\F(\bar m)[\mu, \mu] \ge 0$ for all $\mu$ (with compact support). Note that, if $\mu$ has not compact support, one can construct a sequence $\mu_R= (\mu -c_R)\phi(x/R)$, where $\phi$ is a cutoff function, $c_R = \int \mu \phi/\int \phi$ , so that $\delta^2\F(\bar m)[\mu_R, \mu_R] \to \delta^2\F(\bar m)[\mu, \mu]$ as $R \to \infty$; therefore, \eqref{T3} holds with $\eta = 0$.

	For the second assertion, if we assume the validity of~\eqref{coerci}, then $\bar m$ is still a minimizer of $\F$, and
	\begin{equation*}
		\widetilde{\mathcal{F}}(\bar{m}+\eps \mu) - \widetilde{\mathcal{F}}(\bar{m}) \geq \frac{\eta \eps^2}{2} \int_{\Omega} \mu^2 \di x \Longleftrightarrow \frac12 \delta^2\F(\bar m)[\eps \mu, \eps \mu] +o(\eps^2) \geq \frac{\eta\eps^2}{2} \int_{\Omega} \mu^2 \di x,
	\end{equation*}
	from which we derive Lemma \ref{abc}(c).
\end{proof}



In light of the previous lemmata, we have the following observation on the generic nature of \Sp.

\begin{rmk}[\Sp is a generic property of minimizers]\label{generic}
Note that any (constrained) minimizer $\bar m$ of
\[
	\widetilde{\mathcal{F}}(m)=\int_{\Omega} F(x,m) + \frac{|Dm|^2}{2m} \di x
\]
gives a stationary solution of the PDE system \eqref{statsys}. Indeed, taking variations we have $\delta\F(\bar m)[\mu] = 0$ for all test functions $\mu$ such that $\int_{\Omega} \mu = 0$; this means that
\[
-{\rm div}\left(\frac{D \bar m}{\bar m}\right) - \frac12\left|\frac{D \bar m}{\bar m}\right|^2 + f(x, \bar m) = \lambda
\]
for some $\lambda\in \R$. If we let $\bar u = -\log \bar m$, then the triple $(\bar u, \bar m, \lambda)$ solves \eqref{ergodic}. Note also that the left-hand side of \eqref{raylinear} is nothing but the linearization of the previous equation.

Now, for any $\eta'' > 0$, one can define
\[
\widetilde{\mathcal{F}}_{\eta''}(m):=\widetilde{\mathcal{F}}(m) + \frac{\eta''}{2} \int_{\Omega} (m-\bar{m})^2 \, \di x,
\]
and clearly $\bar m$ is also a minimizer of $\widetilde{\mathcal{F}}_{\eta''}$, yielding in particular a solution of
\[
\begin{cases}
	\Delta \bar{u} - \frac{|D\bar{u}|^2}{2} + f(x,\bar{m}) + \eta''(m - \bar m) = 0,& \text{ on }  \Omega \\
	\Delta \bar{m} + \text{div}(\bar{m} D\bar{u})=0,& \text{ on }  \Omega, \\
	\int_{\Omega} \bar m = 1.
\end{cases}
\]
Moreover, $\widetilde{\mathcal{F}}_{\eta''}$ satisfies the inequality \eqref{coerci}, hence \Sp can be verified. In other words, \textit{any} minimizer $\bar m$ of a given problem with coupling $f$ is actually a (locally) stable solution of a (slightly) perturbed problem, having the coupling $f + \eta''(m-\bar m)$. This is true without any assumption on the sign of $f_m$, and becomes of course relevant when $f$ is monotone decreasing, in which case several stationary states may coexist (as $f + \eta''(m-\bar m)$ remains decreasing if $\eta''$ is small).
\end{rmk}


The construction and the analysis of minimizers of problems involving non-monotone couplings has been carried out in several works, see for example \cite{CCCon, CVe, CKWZ, KTZ, KTZZ, MS18}. The study of their \textit{nondegeneracy} properties is, to our knowledge, open even in special cases. We believe that this constitutes an interesting research direction: in light of our results, such nondegeneracy would be in fact sufficient to show that those stationary minimizers are actually locally stable for the time-dependent problem.

To this purpose, note that the minimization of $\F$ can be recast, after the change of variables $m=\varphi^2$ into the minimization of
\[
\varphi \mapsto \int_{\Omega} F(x,\varphi^2) + 2 |D\varphi|^2 \di x, \qquad \forall \varphi \in W^{1,2}(\Omega), \int_{\Omega} \varphi^2 \, \di x=1,
\]
that is related to the normalized solutions of the (stationary) nonlinear Schr\"odinger equation, and several ideas from the literature on nonlinear Schr\"odinger equations could be borrowed to prove nondegeneracy of solutions, see for instance \cite{PPVV, W99}.

\begin{rmk}[The \textit{local} Lasry-Lions mildly non-monotone case]\label{localLLrmk} In the standard \textit{global} Lasry-Lions monotonicity regime, that is when $f_m \ge \ell > 0$ everywhere, it is clear that Lemma \ref{abc} (c) holds with $\eta'' = \ell$, and so \Sp holds. Actually, we can allow for a \textit{local, mild non-monotonicity assumption}
\begin{equation}\label{localmono}
\inf_{x \in \Omega} f_m(x,\bar{m}(x))\bar m(x) = -c > - \frac 1 {C_P}
\end{equation}
Indeed, by means of the Poincar\'e inequality,
\[
\int_{\Omega} f_m(x,\bar{m}) \mu^2 + \bar{m} \left| D\left( \frac{\mu}{\bar{m}} \right) \right|^2 + \delta \frac{\mu^2}{\bar{m}} \di x \geq  \left( -c + \frac{1}{C_P} \right)  \int_{\Omega}  \frac{\mu^2}{\bar{m}} \di x,
\]
and Lemma \ref{abc} applies.

We stress that \eqref{localmono} is a local assumption. As an illustrative example, consider, in dimension $n=1$ and with $\delta = 0$, the problem with coupling
\[
f(x,m) = \frac{3A}2 m^{1/2} - \frac{A}2 m^{3/2},
\]
that has the trivial stationary solution $(\bar u, \bar m, \lambda) = (0, 1, A)$ for every $A > 0$. We have that $\bar m$ satisfies \Sp : $C_P$ is the constant of the standard Poincar\'e-Wirtinger inequality, and $f_m(x, \bar m) \equiv 0$, hence \eqref{localmono} holds. So, $\bar m$ is a stable solution (for every $A > 0$).

Note that $f$ is not globally Lasry-Lions monotone (not even in the mild sense), since $f_m(m) \to -\infty$ as $m \to \infty$. Actually, the problem admits another stationary solution $(\tilde u, \tilde m, \tilde \lambda)$, that is given by the global minimum of the minimization problem
\begin{equation*}
	\widetilde{\mathcal{F}}(m)=\int_{-1/2}^{1/2} A m^{3/2} - \frac{A}5 m^{5/2} + \frac{|Dm|^2}{2m} \, \di x, \qquad \forall \sqrt{m} \in W^{1,2}(\T), \int_{-1/2}^{1/2} m =1.
\end{equation*}
Indeed, such a minimum exists (as the exponent $3/2$ is mass-subcritical in dimension one, see e.g. \cite{CCCon}) and it is not achieved by $\bar m = 1$ when $A$ is large. To verify this, take any smooth $q \in C^\infty_0(-1/2,1/2)$ such that $\sqrt{q} \in W^{1,2}(-1/2,1/2)$. Then, for $r > 1$,
\[
\widetilde{\mathcal{F}}(r q(r \cdot))=\int_{-1/2}^{1/2} A r^{1/2 }q^{3/2} - \frac{A}5 r^{3/2} q^{5/2} + r^2 \frac{|Dq|^2}{2q} \di x = A(a_1 r^{1/2} - a_2 r^{3/2}) + a_3 r^2
\]
where $a_1, a_2, a_3 > 0$. Choose $r$ large in a way that $a_1 r^{1/2} - a_2 r^{3/2} < 0$. Then, for $A$ large enough,
\[
\widetilde{\mathcal{F}}(\tilde m) = \min_m \widetilde{\mathcal{F}}(m) \le \widetilde{\mathcal{F}}(r q(r \cdot)) < 0 < \frac{4A}5 = \widetilde{\mathcal{F}}(\bar m).
\]
Note that a minimizer $\tilde m$ cannot satisfy \Sp, since the problem is translation invariant and $\{\tilde m(\cdot-y)\}_{y \in \T}$ provides a continuum of minimizers, that are degenerate: \eqref{T3} holds only with $\eta = 0$. Nevertheless, the problem could be perturbed in a way that $\tilde m$ becomes stable (see Remark \ref{generic}), and it is reasonable that the trivial solution $\bar m = 1$ would be perturbed into another stable solution. Note also that the geometry of $\widetilde{\mathcal{F}}$ suggests the presence of another family of solutions of mountain-pass type, arising from the presence of the isolated, local minimum $\bar m$ and the global minima $\{\tilde m(\cdot-y)\}_{y \in \T}$.

We also observe that stability of the trivial stationary state guarantees the existence of dynamic equilibria that remain close to it, at least for initial / final conditions that are close. Anyhow, even if we take the initial / final conditions to be exactly $m(0) = \bar m = 1$ and $u(T) = \bar u = 0$, we \textit{cannot} expect that all solutions $(u,m)$ coincide with $(\bar u, \bar m)$ for all times, not even that the two couples are always close. Indeed, it would be convenient for any \textit{dynamic} minimizer $(u,m)$ (that is a minimizer of the corresponding time-dependent functional involving $\widetilde{\mathcal{F}}$ and a Lagrangian term related to the velocity) to be close to (some translation of) $(\tilde u, \tilde m)$ for most of the time.

Finally, the local condition $f_m(x,\bar{m}(x)) \ge -c/\bar m(x)$ is comparable with the one assumed in \cite{CP21}, that reads $f_m(x,m) \ge -\gamma$ for all $x,m$. Despite the difference between the local/global nature of the two conditions, it could be noticed that the first one appears to be somehow weaker when tested on problems on the whole Euclidean space, where $\bar{m}(x)$ typically vanishes as $x \to \infty$. However, our local condition just guarantees local stability, and we do not know whether the monotonicity of $f(m) + c\log m$ (for small $c > 0$) would be enough to imply global uniqueness (as when $f(m) + \gamma m$ is monotone increasing \cite{CP21}).
\end{rmk}

\begin{exmpl}[A Linear-Quadratic-Gaussian problem] \label{exg} Let $g:(0,\infty) \to (0,\infty)$ be differentiable. Consider a stationary problem with $\delta=0$, $n=1$ and $\Omega=\R$ with quadratic cost and coupling via second-order moments of $m$:
	\begin{equation*} \label{statg1}
		\begin{cases}
			-\bar u_{xx} + \frac{1}{2} \bar u_x^2 +\bar \lambda = g\left( \int_{\R} x^2 \bar m(x) \, \di x \right) x^2 ,& \text{ on } \R \\
			\bar m_{xx} + (\bar m \bar u_x)_x=0,& \text{ on } \R.
		\end{cases}
	\end{equation*}
Using the standard Quadratic-Gaussian ansatz, we get solutions of the form
\[
(\bar u, \bar m, \bar \lambda) = \left(\frac{x^2}{2 \bar M}, \frac{e^{-x^2 / (2\bar M)}}{\int_\R e^{-x^2 / (2\bar M)}}, \frac1{\bar M} \right)
\]
if (and only if)
\[
g(\bar M) = \frac{1}{2\bar M^2}
\]
Notice that $\bar M =  \int_{\R} x^2 \bar m$. In other words, given $g$, it may intersect the curve $M \mapsto 1/(2M^2)$ several times, and each intersection gives a stationary solution. Are these solutions stable? We claim that if
\begin{equation}\label{gprimecond}
g'(\bar M) > -\frac{1}{\bar M^3},
\end{equation}
then \textit{any $(v, \mu, \ell, \eta)$ such that $\|v''\|_{L^\infty(\R)}, \|x^2 \mu\|_{L^1(\R)} < \infty$ satisfying the linearized system}
\begin{equation} \label{firsteigsecg}
		\begin{cases}
			- v_{xx} + v_x \cdot \bar u_x - g'(\bar M) \left( \int_{\R} y^2 \mu(y) \, \di y \right) x^2= \eta v + \ell \\
			\mu_{xx} + (\bar m\cdot v_x)_x + (\mu \cdot \bar u_x)_x = 0, \qquad (v = -\mu/\bar m)  \\
			 \int_\R \mu = 0.
		\end{cases}
	\end{equation}
is such that $\eta \ge \eta_1$ for some $\eta_1 > 0$. This has a strong connection with the validity of \Sp, as discussed in Remark \ref{firsteigrmk}. Interestingly, $-M^{-3}$ is the derivative of $M^{-2}/2$, hence any time $g$ crosses $M \mapsto 1/(2M^2)$ from below, a stable solution (in the sense of spectral properties of \eqref{firsteigsecg}) is generated, see Figure \ref{ggraph}. Local monotonicity in the Lasry-Lions sense holds only if $g'(\bar M) > 0$, and it can be checked that displacement monotonicity (see \cite{CM24}) is locally verified only if $g'(\bar M) > -\bar M^{-3}/4$, hence our analysis can detect further (locally) stable solutions.

\begin{figure}
\centering
\begin{tikzpicture}
\begin{axis}[
    domain=0.2:3,
    samples=200,
    axis lines=left,
    xlabel={$M$},
    ylabel={$g(M)$},
    ymin=0, ymax=2,
    xmin=0.2, xmax=3,
    ticks = none,
        width=0.5\textwidth,
        height=0.3\textwidth,
        ylabel style={rotate=-90}
]

\addplot[blue, dashed] {1/(2*x^2)};

\addplot[red, thick] {(2+cos(100*x))/(1+ 2*x^2)};

\addplot[only marks, mark=*, mark size=2.5pt]
coordinates {
    (0.575, {1/(2*(0.575)^2)})   
    (2.09, {1/(2*(2.09)^2)})   
};

\end{axis}
\end{tikzpicture}
\caption{Solid line is the graph of $g(M)$, while dashed one is the graph of $(2M)^{-1}$. Intersections indicated with black dots give rise to stable stationary solutions.}\label{ggraph}
\end{figure}

Before we proceed with the proof of the claim, it should be noted that the existence Theorem \ref{existenceintro} does not apply directly to this example, which is set on the unbounded space $\R$, and involves solutions with quadratic growth like $\bar u$ (that is the reason why it is reasonable to restrict to perturbations $v$ having bounded second order derivatives). In addition, the problem involves a nonlocal coupling rather than a local one (see Remark \ref{nonlocrem} on this aspect). We still believe that this example is instructive, since it shows how to apply our arguments in a specific situation that exhibits several stationary states. We believe that a more general version of Theorem \ref{existenceintro}, accommodating a framework where solutions are unbounded like $\bar u$ (and not necessarily quadratic polynomials in $x$), could be obtained with some additional technical work.

To prove the claim, we observe first that $v$ (with bounded second derivatives) can be only a quadratic polynomial. Indeed, let $w(x):=v_{xx}(x)$ be the second derivative of $v$. If we plug $\bar u_x = x/\bar M$  in~\eqref{firsteigsecg} and differentiate twice the equation for $v$, we get that the equation satisfied by $w$ is
\begin{equation} \label{Hermite}
	-w_{xx} + \frac{1}{\bar M} w_x \cdot x - \left (\eta - \frac{2}{\bar M}\right) w = 2g'(\bar M) \left( \int_{\R} y^2 \mu(y) \, \di y \right).
\end{equation}
Equation~\eqref{Hermite} takes the form of a (non-homogeneous) Hermite equation. Now, if the right-hand side is zero, it is well known that the values of $\eta$ which give solutions with polynomial growth are
$$\eta_n = (2n+2) \frac{1}{\bar M} \qquad n \in \mathbb{N}.$$
In particular, for each $\eta_n$ the corresponding (admissible) solution $w$ is the Hermite polynomial of degree $n$. Since we are looking for a bounded $w$, we just focus on the node $n=0$, which corresponds to $w=C$, for some constant $C$. Therefore, the function $v$ is a polynomial of degree (at most) $2$. Otherwise, if the right-hand side $g'(\bar M) \left( \int_{\R} y^2 \mu(y) \, \di y \right) \neq 0,$ then we have
$$w(x):=z(x)+\frac{2 g'(\bar M) \left( \int_{\R} y^2 \mu(y) \, \di y \right)}{\frac{2}{\bar M} - \eta},$$
where $z$ solves
$$-z_{xx} + \frac{1}{\bar M} z_x \cdot x - \left( \eta-\frac{2}{\bar M} \right) z = 0.$$
As above, $z$ is constant, hence $v$ is quadratic (note that if $\eta = 2/\bar M$ we are done, since such value of $\eta$ is positive).

Then, $v$ solving \eqref{firsteigsecg} has to be of the following form:
\[
v(x) =\alpha x^2 + \beta x + \gamma.
\]
Note that, since $\int_\R \mu = 0$, testing the equation for $\mu$ by $x^2$ gives the identity
\[
\int_{\R} x^2 \mu \, \di x = -2\alpha \bar M^2,
\]
hence plugging the two identities above into the equation for $v$ gives the following set of conditions:
\begin{equation*}
		\begin{cases}
			\frac{2\alpha}{\bar M} + 2\alpha \bar M^2 g'(\bar M) = \eta \alpha\\
			\frac{\beta}{\bar M} = \eta \beta \\
			-2\alpha = \eta \gamma +\ell
		\end{cases}
	\end{equation*}
Notice first that the case $\alpha = \beta = 0$ has to be ruled out, since, by the identity $v = -\mu/\bar m$, if $v$ is constant then $\mu$ is constant as well, but identically zero since $\int \mu = 0$, so $(v, \mu)$ is the trivial solution. Hence, assume that $\alpha \neq 0$. Then from the first equation
\[
\eta = \frac{2}{\bar M} + 2 \bar M^2 g'(\bar M) > 0
\]
in view of \eqref{gprimecond}. If $\alpha = 0$, the second equation forces $\eta = \frac{1}{\bar M} > 0$ (since $\beta$ cannot be zero). Therefore we obtain that $\eta \ge \min\{\frac{2}{\bar M} + 2 \bar M^2 g'(\bar M), \frac{1}{\bar M} \} > 0$.
\end{exmpl}

\subsection{Final remarks and open questions}
	
	\begin{rmk}[More general Hamiltonians] The presence of a quadratic Hamiltonian allows to present the approach proposed in this paper in a more transparent way, but we believe that more general situations can be addressed similarly. If we revisit the discussion of Section \ref{sdisc} replacing the Hamiltonian $|Du|^2$ by $H(x,Du)$, we first observe that the linearized parabolic system reads
	\[
	\begin{cases}
			\partial_t \mu = \Delta \mu + \text{div}(\mu H_p(D \bar u)) + \text{div}(\bar m H_{pp}(D \bar u)Dv ) , \\
			\partial_t v = - \Delta v + \langle Dv,H_p(D \bar u) \rangle - f_m(x,\bar{m})\mu + \delta v \\
		\end{cases}
	\]
	so we have the same operator form with
	\begin{align*}
		& A \mu = \Delta \mu + \text{div}(\mu H_p(D \bar u)),\\
		& BB^{*} v = -\text{div}(\bar m H_{pp}(D \bar u)Dv).
	\end{align*}
	Heuristically, our approach can be developed following almost identical lines provided that the symmetry condition 
	\[
		A^{*} (BB^{*})^{-1} = (BB^{*})^{-1} A
	\]
	holds. There are situations besides the purely quadratic one where the previous identity is satisfied, but those will be explored in future work.
	
%
	\end{rmk}
	
	\begin{rmk}[The potential structure] In the MFG framework, the variational point of view on the problem under consideration is related with the structure of the Lagrangian cost $|\alpha|^2 + f(x,m)$. It is known that, given a more general Lagrangian $L(x, \alpha, m)$, a potential description may not be available. Nevertheless, the approach presented here may still apply, since it works at the level of PDE systems; indeed, one would need to linearize the parabolic problem, and address the symmetry and nondegeneracy conditions discussed in Section \ref{sdisc}. 
	
	Moreover, one could replace $f$ by a nonlocal coupling (see the previous Remark \ref{nonlocrem}); while in the local case the problem  with cost $|\alpha|^2 + f(x,m)$ has always potential structure, the nonlocal case may fail to have a variational characterization.
	\end{rmk}
	
	\begin{rmk}[Large discount]
		The approach proposed here requires $\delta$ to be small; if one looks at the main statements, the presence of the discount ``unbalances'' the forward / backward in time exponential decay terms, and the arguments break down if $\delta$ is too large. The validity of the (local) turnpike phenomenon is not clear when $\delta$ is large, and it would be worth investigating, also in light of the recent results \cite{BCflock, CCont} on models involving large $\delta$.
		
	\end{rmk}
	
	\begin{rmk}[Final condition depending on $m$] If one replaces the final condition $u_T(x)$ with an $m$-dependent final cost $g(x, m)$, we expect that a local Lasry-Lions monotonicity condition of the form $g_m(x, \bar m(x)) \ge 0$ should be sufficient to preserve the stability of $(\bar u, \bar m)$; indeed, that would add to $\Phi(T)$ a positive contribution, which would preserve the validity of the crucial estimates on $\Phi$ (see for example \ref{nuovaint}).
	\end{rmk}

	\begin{rmk}[Dissipativity] The approach presented here can be also revisited under the lens of (local) \textit{dissipativity}, a notion that is known to be connected to the turnpike property in optimal control theory (see for instance \cite{TZh}). In particular, integrating \eqref{phidot} on $[t_1, t_2]$ gives
	\[
	\Phi(t_1) \ge \int_{t_1}^{t_2}\langle\mu, Q\mu\rangle  + \langle \mu, A^* (BB^*)^{-1}A \mu \rangle \di t + \Phi(t_2) ,
	\]
	and whenever the right-hand side controls from above $\int_{t_1}^{t_2} |\mu|^2$, it is known that the turnpike property can be proven, at least in an integral sense (this kind of control from above is in fact our assumption \Sp). Comparing the previous inequality with \cite[Definition 1]{TZh}, one may recognize that $\Phi$ plays the role of a \textit{storage function}, though it is time-dependent (and not state-dependent) and the inequality is reversed.
	
	We also mention the work \cite{KFG}, that uses the notion of local dissipativity to address the local turnpike property in some discrete-time optimal control problems.
	\end{rmk}
	
	\bigskip
	
	\textbf{Acknowledgements.} M. C. is member of the Gruppo Nazionale per l'Analisi Matematica, la Probabilit\`a e le loro Applicazioni (GNAMPA) of the Istituto Nazionale di Alta Matematica (INdAM). He has been partially funded by the EuropeanUnion–NextGenerationEU under the National Recovery and Resilience Plan (NRRP), Mission 4 Component 2 Investment 1.1 - Call PRIN 2022 No. 104 of February 2, 2022 of Italian Ministry of University and Research; Project 2022W58BJ5 (subject area: PE - Physical Sciences and Engineering) “PDEs and optimal control methods in mean field games, population dynamics and multi-agent models”, and by the King Abdullah University of Science and Technology Research Funding (KRF) under award no. CRG2024-6430.6.

\section{$L^2$ turnpike a priori estimates} \label{nlturnl2}
	The results in this section are the crucial steps towards the proof of Theorem \ref{apriori}, and concern the $L^2$ local turnpike behavior of solutions. Let us first recall the non-linear system which comes by taking the difference between the equations satisfied by $u$,$m$ and their stationary counterparts $\bar{u}$ and $\bar{m}$ :
	\begin{equation}
		\begin{cases}
			-\partial_t v - \Delta v + \langle Dv,D\bar{u} \rangle - f(\bar{m}+\mu) + f(\bar{m}) + \frac{|Dv|^2}{2} + \delta v=0,&  \text{ on } (0,T) \times \Omega \\
			\partial_t \mu - \Delta \mu - \text{div}(\bar{m}Dv) - \text{div}(\mu D\bar{u}) - \text{div}(\mu Dv)= 0,&  \text{ on } (0,T) \times \Omega  \\
			\mu(0,\cdot)=\mu_0(\cdot) \quad v(T,\cdot)=v_T(\cdot),& \text{ on }  \Omega. \\
		\end{cases}
		\tag{NL}
		\label{nlsys}
	\end{equation}
	Now, consider a solution to the previous system, i.e. a couple $(v,\mu)$ which satisfies the two coupled equations. We Taylor expand $f$ around $\bar{m}$ with integral remainder, so that the first equation becomes
	\begin{multline*}
	-\partial_t v - \Delta v + \langle Dv,D\bar{u} \rangle - f_m(\bar{m})\mu - \xi \mu^2 + \frac{|Dv|^2}{2} + \delta v= 0, \\
	 \text{where \quad $\xi(x,t)=\int_0^1(1-z)f_{mm}(x, \bar m(x) + z \mu(x,t)) \, \di z$}
	 \end{multline*}
	We need to assume some integrability conditions for $\mu$ and $v$ in order to be able to perform the computations in Lemmata of this section:
	\begin{equation}\label{integrab}
	\begin{aligned}
	& \partial_t \mu(t), \mu(t) \in L^1(\Omega), \quad  \mu(t), D \mu(t), D^2 \mu(t), \mu(t) / \bar m \in L^\infty(\Omega), \\
	& \partial_t v(t), v(t), Dv(t), D^2 v(t), D^3 v(t) \in L^\infty(\Omega), \\
	& \int_\Omega \mu(t) = 0
	\end{aligned}
	\end{equation}
for a.e. $t \in (0,T)$.
\begin{rmk} \label{nurmk2}
	It is worth noting that, as all the estimates in this section are either monotone with respect to the initial and terminal data or independent of them, they extend to the following PDE system:
	\begin{equation} \label{nupde}
			\begin{cases}
			-\partial_t v - \Delta v + \langle Dv,D\bar{u} \rangle - f(\bar{m}+ \nu \mu) + f(\bar{m}) + \frac{|Dv|^2}{2} + \delta v=0,&  \text{ on } (0,T) \times \Omega \\
			\partial_t \mu - \Delta \mu - \text{div}(\bar{m}Dv) - \text{div}(\mu D\bar{u}) - \text{div}(\mu Dv)= 0,&  \text{ on } (0,T) \times \Omega  \\
			\mu(0,\cdot)=\nu \mu_0(\cdot) \quad v(T,\cdot)=\nu v_T(\cdot),& \text{ on }  \Omega, \\
		\end{cases}
	\end{equation}
for all $\nu \in [0,1]$; this will be important to conclude the fixed point argument in Section \ref{fix2}. Indeed, it will be crucial below to use the inequality in \Sp in the more general form
\[
\int_{\Omega} \nu f_m(x,\bar{m}) \mu^2 + \bar m\left| D \left( \frac{\mu}{\bar{m}} \right) \right|^2 + \frac{\delta \mu^2}{\bar{m}} \, \di x \geq \eta \int_{\Omega} \bar m\left| D \left( \frac{\mu}{\bar{m}} \right) \right|^2 \di x,
\]
where $\eta$ is independent of $\nu \in [0,1]$. This is true because the inequality is assumed in \Sp to hold when $\nu = 1$, while for $\nu = 0$ it is obviously satisfied with $\eta = 1$. The general case follows by convex combination.

\end{rmk}

The following Lemma is the fundamental tool to get a weighted $L^2$ exponential-type turnpike property, which will be used in the next Section. Let us first denote by $C_f$ the global bound on first and second derivatives of $f$ with respect to $m$.
$$ \| f_m \|_{L^{\infty}(\Omega \times \R)} + \| f_{mm} \|_{L^{\infty}(\Omega \times \R)} \le C_f.$$

	\begin{prop} \label{phinl1}
		Suppose that \eqref{integrab} and \Sp holds. Let  
		$$\tilde{\Phi}(t):=\int_{\Omega}e^{-\delta t} \left( \mu(t,x) v(t,x) + \frac{\mu^2(t,x)}{\bar{m}(x)} \right) \di x.$$
		Then, there exist $\Theta \le 1$ depending on $\bar{m}, \eta$ and a constant $\sigma > 0$ depending on $\eta,C_P,C_f,\bar{m}$ (where $C_P$ is the Poincar\'e constant of $\bar{m}$) such that, if
		$$\sup_{t \in [0,T]}\|\mu(t,\cdot)\|_{L^\infty(\Omega)} + \sup_{t \in [0,T]} \| Dv(t,\cdot) \|_{L^\infty(\Omega)} \le \Theta,$$
		then
		\begin{equation}
			\Phi(T) e^{-(\sigma+\delta)(T-t)} \le \Phi(t) \le \Phi(0)e^{-(\sigma-\delta) t},
			\label{phidelta}
		\end{equation}
		for all $t \in [0,T]$, where
		\begin{equation*}
			\Phi(t):=e^{\delta t} \tilde{\Phi}(t) = \int_{\Omega} \mu(t,x) v(t,x) + \frac{\mu^2(t,x)}{\bar{m}(x)} \, \di x.
		\end{equation*}
	\end{prop}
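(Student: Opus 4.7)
The plan is to derive a differential inequality of the form $\dot{\tilde\Phi}(t) \le -\sigma|\tilde\Phi(t)|$ for some $\sigma > 0$, from which the bounds in \eqref{phidelta} will follow by elementary ODE considerations.

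First I would differentiate $\tilde\Phi$ along the solution of \eqref{nlsys}. The decisive algebraic identity is $D\mu + \mu D\bar u = \bar m\,D(\mu/\bar m)$, which encodes the first bullet of \Sp and allows one to recombine the cross terms produced by integration by parts into perfect squares. The outcome is a decomposition
\[
\dot{\tilde\Phi}(t) = -e^{-\delta t}\bigl[S(t) + D(t)\bigr] + e^{-\delta t}\,\mathcal N(t),
\]
where $S(t)$ equals the left-hand side of \eqref{T3}, $D(t) = \int_\Omega \bar m\bigl|Dv + D(\mu/\bar m)\bigr|^2\,\di x$, and
\[
\mathcal N(t) = -\int_\Omega \xi\mu^3\,\di x - \tfrac12\int_\Omega \mu|Dv|^2\,\di x - 2\int_\Omega \mu\langle Dv, D(\mu/\bar m)\rangle\,\di x
\]
collects the cubic errors coming from $-\xi\mu^2$ and $|Dv|^2/2$ in the HJ equation and from $\mathrm{div}(\mu Dv)$ in the Fokker--Planck equation. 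The linear part matches precisely the heuristic identity \eqref{dotphi}.

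Next I would estimate $\mathcal N$ and bound $|\Phi|$ from above by $S+D$. For $\mathcal N$, each cubic term admits a bound of the form $C\Theta(S+D)$: the contributions $\int\xi\mu^3$ and $\int\mu|Dv|^2$ are handled directly using $\|\mu\|_\infty \le \Theta$ together with Lemma \ref{abc}, while the cross term $\int\mu\langle Dv, D(\mu/\bar m)\rangle$ is treated by Cauchy--Schwarz followed by $\int\bar m|Dv|^2 \le 2D + 2S/\eta$ (which follows from the definition of $D$ and \Sp) and an AM--GM step. Taking $\Theta$ small enough---depending only on $\bar m, \eta, C_P, C_f$---then yields $\dot{\tilde\Phi}(t) \le -\tfrac12 e^{-\delta t}(S(t)+D(t))$. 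Separately, $\int \mu^2/\bar m$ is dominated by $S$ via Lemma \ref{abc}(b); and for $\int\mu v$, the zero-mean condition $\int\mu = 0$ allows me to subtract the $\bar m$-weighted mean $\bar v$ of $v$, apply Cauchy--Schwarz, and invoke the weighted Poincar\'e inequality, giving
\[
\Bigl|\int_\Omega \mu v\,\di x\Bigr| \le \Bigl(\int_\Omega \frac{\mu^2}{\bar m}\,\di x\Bigr)^{1/2}\Bigl(C_P\int_\Omega \bar m|Dv|^2\,\di x\Bigr)^{1/2},
\]
which together with the same bound on $\int\bar m|Dv|^2$ and AM--GM yields $|\Phi(t)| \le K(S(t)+D(t))$. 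Combined, these estimates produce $\dot{\tilde\Phi}(t) \le -\sigma|\tilde\Phi(t)|$ with $\sigma := 1/(2K)$.

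Finally I would integrate the differential inequality. The bound $\dot{\tilde\Phi} \le -\sigma|\tilde\Phi|$ forces $\tilde\Phi$ to be non-increasing; on any sub-interval where $\tilde\Phi \ge 0$ it reduces to $\dot{\tilde\Phi} \le -\sigma\tilde\Phi$, giving $\tilde\Phi(t) \le \tilde\Phi(0)e^{-\sigma t}$; on any sub-interval where $\tilde\Phi \le 0$ it reads $\dot{\tilde\Phi} \le \sigma\tilde\Phi$, so $e^{-\sigma t}\tilde\Phi$ is non-increasing and $\tilde\Phi(t) \ge \tilde\Phi(T)e^{-\sigma(T-t)}$. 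A short case analysis on the signs of $\tilde\Phi(0)$ and $\tilde\Phi(T)$ closes both inequalities, and multiplying through by $e^{\delta t}$ gives \eqref{phidelta}. I expect the main obstacle to lie in the absorption step: the cross term $\int\mu\langle Dv, D(\mu/\bar m)\rangle$ couples $Dv$ (controlled in $L^2(\bar m\,\di x)$ only via $D$) with $D(\mu/\bar m)$ (controlled in the same weighted space only via $S$ and \Sp), and the power counting needed to bound this product by $\Theta(S+D)$ rather than by $\Theta\sqrt{S+D}$ is tight; it is what ultimately fixes the size of the threshold $\Theta$.
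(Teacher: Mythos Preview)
Your proposal is correct and follows essentially the same route as the paper: compute $\dot{\tilde\Phi}$, identify the stationary/dynamic split \eqref{dotphi}, invoke \Sp\ and the weighted Poincar\'e inequality, absorb the cubic errors using the smallness of $\|\mu\|_\infty+\|Dv\|_\infty$, and arrive at $\dot{\tilde\Phi}\le -\sigma|\tilde\Phi|$. The only notable difference is bookkeeping: you keep $S$ and $D$ as master quantities and bound everything in terms of $S+D$, whereas the paper first applies \Sp\ and then works directly with $\int\bar m|Dv|^2$, $\int\bar m|Dz|^2$, $\int\mu^2/\bar m$; also, for the error $\tfrac12\int\mu|Dv|^2$ the paper uses \emph{both} pointwise bounds, writing $|\mu||Dv|^2\le\Theta|\mu||Dv|\le\tfrac{\Theta}{2}(\mu^2/\bar m+\bar m|Dv|^2)$, which avoids any implicit lower bound on $\bar m$ and is needed when $\Omega=\R^n$ --- your phrase ``handled directly using $\|\mu\|_\infty\le\Theta$ together with Lemma~\ref{abc}'' does not quite cover this.
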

	\begin{proof}
		We proceed directly with the computation of the derivative of $\tilde{\Phi}$. We first perform some integration by parts, recalling that $D\mu + \mu D\bar{u}= \bar{m}D\left( \frac{\mu}{\bar{m}} \right)$, since $D \bar m = - \bar m D \bar u$. Hypothesis~\eqref{integrab} ensures the operation of integration by parts (in particular when $\Omega = \R^n$) and that derivatives can be moved into integrals.
		\begin{equation*}
			\dot{\tilde{\Phi}}(t)=\int_{\Omega} e^{-\delta t} \left[ (\partial_t \mu) v + \mu (\partial_t v) - \delta \mu v + 2\frac{\mu (\partial_t \mu)}{\bar{m}} - \delta \frac{\mu^2}{\bar{m}} \right] \di x=
		\end{equation*}
		\begin{multline*}
			=\int_{\Omega} e^{-\delta t} \left[ v(\Delta \mu + \text{div}(\mu D\bar{u} + \bar{m}Dv + \mu Dv ) \right] \di x + \\
			+\int_{\Omega} e^{-\delta t} \left[ \mu \left( -\Delta v + \langle Dv,D\bar{u} \rangle - f_m(\bar{m})\mu - \xi \mu^2 + \frac{|Dv|^2}{2} + \delta v \right) \right] - \delta \mu v e^{-\delta t} \di x +  \\
			+ \int_{\Omega} e^{-\delta t} \left[ 2\frac{\mu}{\bar{m}} \left( \text{div}(D\mu + \mu D\bar{u} + \bar{m} Dv + \mu Dv \right) \right] - \delta \frac{\mu^2}{\bar{m}} e^{-\delta t} \di x=
		\end{multline*}
		\begin{multline*}
			=\int_{\Omega} e^{-\delta t} \left[ - \bar{m} |Dv|^2 - \mu |Dv|^2 - f_m(\bar{m})\mu^2 - \xi \mu^3 + \frac{\mu|Dv|^2}{2} - \delta \frac{\mu^2}{\bar{m}} - 2\bar{m} \left| D\left(\frac{\mu}{\bar{m}} \right) \right|^2 \right] \di x +\\
			+\int_{\Omega} e^{-\delta t} \left[- 2 \bar{m} \langle Dv,D\left( \frac{\mu}{\bar{m}} \right) \rangle - 2 \mu \langle Dv,D\left( \frac{\mu}{\bar{m}} \right) \rangle \right] \di x.
		\end{multline*}
		From now on, for the sake of simplicity, we will denote by
		$$z:=\frac{\mu}{\bar{m}}.$$
		Changing all the signs, we get the following equality:
		\begin{multline}
			-\dot{\tilde{\Phi}}(t) = \int_{\Omega} e^{-\delta t} \Big[ \frac{\mu |Dv|^2}{2} + \bar{m}|Dv|^2 +  f_m(\bar{m})\mu^2 + \xi \mu^3 + \delta \frac{\mu^2}{\bar{m}} +\\ + 2\bar{m}|Dz|^2 + 2\bar{m} \langle Dv,Dz \rangle + 2\mu \langle Dv,Dz \rangle \Big] \di x.
			\label{part1}
		\end{multline}
	We now use the hypothesis \Sp with $\mu$. 
		Hence, for all $t \in [0,T]$,
		\begin{equation}
			\int_{\Omega} f_m (\bar{m})\mu^2 + \bar{m}|Dz|^2 + \delta \frac{\mu^2}{\bar{m}} \di x \geq \eta \int_{\Omega} \bar{m} |Dz|^2 \di x,
			\label{coerc}
		\end{equation}
		for some $\eta \in (0,1)$.
		Then, if one plugs~\eqref{coerc} into~\eqref{part1}, this yields
		\begin{equation*}
			\begin{split}
				-\dot{\tilde{\Phi}}(t) &\geq \int_{\Omega} e^{-\delta t} \left[ \frac{\mu |Dv|^2}{2} + \bar{m}|Dv|^2 + \xi \mu^3+ (1+\eta)\bar{m}|Dz|^2  + 2\bar{m} \langle Dv,Dz \rangle + 2\mu \langle Dv,Dz \rangle \right] \di x = \\
				&=\int_{\Omega} e^{-\delta t} \left[ \frac{\mu|Dv|^2}{2}+ \bar{m}|Dv+Dz|^2 + \eta \bar{m}|Dz|^2 +\xi  \mu^3 + 2\mu \langle Dv,Dz \rangle \right] \di x.
			\end{split}
		\end{equation*}
		Since it can be easily shown that (as $\eta \in (0,1)$),
		\begin{equation*}
			\bar{m}|Dv+Dz|^2 + \eta \bar{m} |Dz|^2 \geq \frac{\eta}{4} \bar{m} |Dv|^2 + \frac{\eta}{4} \bar{m} |Dz|^2,
		\end{equation*}
		we can rewrite the whole inequality as follows:
		\begin{equation}
			-\dot{\tilde{\Phi}}(t) \geq \int_{\Omega} e^{-\delta t} \left[ \frac{\eta}{4} \bar{m}|Dv|^2 + \frac{\eta}{4} \bar{m}|Dz|^2 +\xi \mu^3 + 2\mu \langle Dv,Dz \rangle + \frac{\mu|Dv|^2}{2} \right] \di x.
			\label{part2}
		\end{equation}
		Now, we deal with the last three terms of~\eqref{part2} using the smallness hypothesis on $\mu$ and $Dv$ :
		\begin{itemize}
			\item $2 \mu \langle Dv,Dz \rangle \geq - 2 |\mu| |Dv| |Dz| \geq - 2 \Theta |\mu| |Dz| \geq - \Theta \frac{\mu^2}{\bar{m}} - \Theta \bar{m}|Dz|^2;$
			\item $\xi \mu^3 \geq - C_f \left \| \mu(t,\cdot) \right \|_{L^{\infty}(\Omega)} \left \| \bar{m} \right \|_{L^{\infty}(\Omega)} \frac{\mu^2}{\bar{m}} \geq - 
			C_f \|\bar{m}\|_{L^{\infty}(\Omega)} \Theta \frac{\mu^2}{\bar{m}};$
			\item $\frac{1}{2} \mu |Dv|^2 \geq -\frac{1}{2} |\mu| |Dv|^2 \geq - \frac{1}{2} \Theta |\mu| |Dv| \geq - \frac{1}{4} \Theta \frac{\mu^2}{\bar{m}} - \frac{1}{4} \Theta \bar{m} |Dv|^2.$
		\end{itemize}
		We plug all the previous estimates into~\eqref{part2} and we get:
		\begin{multline*}
			-\dot{\tilde{\Phi}}(t) \geq \\
			\int_{\Omega} e^{-\delta t} \bar{m} |Dv|^2 \left[ \frac{\eta}{4} - \frac{\Theta}{4} \right] + e^{-\delta t} \bar{m} |Dz|^2 \left[ \frac{\eta}{4} - \Theta \right]+ e^{-\delta t} \frac{\mu^2}{\bar{m}}  \left[ - \Theta C_f \|\bar{m}\|_{L^{\infty}(\Omega)} - \frac{5 \Theta}{4} \right] \di x \geq
		\end{multline*}
		Now, since $\int_{\R^n} \bar{m}(x) z(t,x) \di x = \int_{\Omega} \mu(t,x) \di x = 0$ for all $t$, by the Poincar\'e inequality we have that
		\[
		 \int_{\Omega} \frac{\mu^2}{\bar{m}} \di x = \int_{\Omega} z^2 \bar{m} \di x \le C_P \int_{\Omega} |Dz|^2 \bar{m} \di x,
		\]
		hence
		\begin{multline*}
			-\dot{\tilde{\Phi}}(t) \geq \\
			\int_{\Omega} e^{-\delta t} \bar{m} |Dv|^2 \left[ \frac{\eta}{4} - \frac{\Theta}{4} \right] + e^{-\delta t} \bar{m} |Dz|^2 \left[ \frac{\eta}{8} - \Theta \right] + e^{-\delta t} \left[\frac{\eta}{8C_P}- \Theta C_f \|\bar{m}\|_{L^{\infty}(\Omega)} -\frac{5\Theta}{4} \right] \frac{\mu^2}{\bar{m}} \di x.
		\end{multline*}
		
		Now, if $$\Theta \le \frac\eta{16} \quad \text{and} \quad \Theta \le \frac{\eta}{16C_P(C_f \|\bar{m}\|_{L^{\infty}(\Omega)} + 5/4)}$$ the following inequality follows:
		\begin{equation}\label{part3}
			\begin{split}
				-\dot{\tilde{\Phi}}(t) & \geq \int_{\Omega} e^{-\delta t}  \left[ \frac{3\eta}{16} \bar{m} |Dv|^2+  \frac\eta{16}  \bar{m} |Dz|^2 +\left(\frac{\eta}{8C_P}- \Theta C_f \|\bar{m}\|_{L^{\infty}(\Omega)} -\frac{5\Theta}{4} \right) \frac{\mu^2}{\bar{m}} \right] \di x \\
				& \geq \int_{\Omega} e^{-\delta t}  \left[ \frac{3\eta}{16} \bar{m} |Dv|^2+  \frac\eta{16}  \bar{m} |Dz|^2+ \frac{\eta}{16C_P} \frac{\mu^2}{\bar{m}} \right] \di x \\
				& \geq \frac{\eta}{32 C_P} \int_{\Omega} e^{-\delta t} \left( \bar{m}\hat{v}^2 + \frac{\mu^2}{\bar{m}} \right) + e^{-\delta t} \frac{\mu^2}{\bar{m}} \di x,
			\end{split}
		\end{equation}
		where $\hat{v}=v-\int_{\Omega} \bar{m}(y) v(t,y) \, \di y$. \\
		Hence, by Young's inequality and the triangle inequality, we get that, for $\sigma = \frac{\eta}{32 C_P}$,
		\begin{equation*}
			-\dot{\tilde{\Phi}}(t) \geq \sigma \int_{\Omega} e^{-\delta t} |\mu \hat{v}| + e^{-\delta t} \left| \frac{\mu^2}{\bar{m}} \right| \di x \geq \sigma e^{-\delta t} \left| \int_{\Omega} \mu \hat{v} + \frac{\mu^2}{\bar{m}} \di x \right| \geq \sigma e^{-\delta t} \left| \int_{\Omega} \mu \hat{v} + \frac{\mu^2}{\bar{m}} \di x \right| = \sigma |\tilde{\Phi}(t)|,
		\end{equation*}
		since
		\begin{equation*}
			\int_{\Omega} \mu(t,x)\hat{v}(t,x) \di x = \int_{\Omega} \mu v \, \di x - \int_{\Omega} \mu(t,x) \underbrace{\left( \int_{\Omega} \bar{m}(y)v(t,y) \di y \right)}_{=\text{constant w.r.t. } x} \di x = \int_{\Omega} \mu(t,x) v(t,x) \di x.
		\end{equation*}
		Then, we finally obtain that
		\begin{equation*}
			\begin{cases}
				\dot{\tilde{\Phi}}(t) \le \sigma \tilde{\Phi}(t) \\
				\dot{\tilde{\Phi}}(t) \le -\sigma \tilde{\Phi}(t),
			\end{cases}
		\end{equation*}
		from which, after integration on $[t,T]$ and $[0,T]$, one gets
		\begin{equation}\label{tildephiineq}
			\tilde{\Phi}(T) e^{-\sigma(T-t)} \le \tilde{\Phi}(t) \le \tilde{\Phi}(0) e^{-\sigma t},
		\end{equation}
		which implies~\eqref{phidelta}.
		
		Finally, note that integrating \eqref{part3} on $[t_1,t_2]$ gives
		\begin{equation}\label{nuovaint}
		\int_{t_1}^{t_2} \int_{\Omega} e^{-\delta t} \left[\bar{m} |Dv|^2+  \bar{m} |Dz|^2+  \frac{\mu^2}{\bar{m}}\right] \di x \di t \le c [\tilde{\Phi}(t_1)-  \tilde{\Phi}(t_2)],
		\end{equation}
		where $c$ depends on $\eta, C_P$.
	\end{proof}
	
	The next two Lemmata show how to convert the exponential decay estimate \eqref{phidelta} into a similar one on $\mu$ and $Dv$.
	
	\begin{prop}\label{phinl2}
		Suppose that \eqref{integrab} and \Sp holds, together with the following condition on $\mu$ and $Dv$:
		\begin{equation*}
			\sup_{t \in [0,T]} \left \| \mu(t,\cdot) \right \|_{L^\infty(\Omega)} + \sup_{t \in [0,T]} \left \| Dv(t,\cdot) \right\|_{L^\infty(\Omega)} \le \Theta,
		\end{equation*}
		where $\Theta$ is as in Proposition~\ref{phinl1}.
		Then, we the following inequality holds:
		\begin{equation*}
			\left \| \frac{\mu(t,\cdot)}{\sqrt{\bar{m}(\cdot)}}\right \|^2_{L^2(\Omega)} = \left \| \frac{m(t,\cdot)-\bar{m}(\cdot)}{\sqrt{\bar{m}(\cdot)}} \right \|^2_{L^2(\Omega)} \le \lambda_1 e^{-(\sigma-\delta) t} + \lambda_2 e^{-(\sigma+\delta) (T-t)},
		\end{equation*}
		for all $t \in [0,T]$, with
		$$\lambda_1 = C\left(|\Phi(0)|+ \left\| \frac{\mu_0}{\sqrt{\bar{m}}} \right\|_{L^2(\Omega)}^2 \right), \quad \lambda_2 = C|\Phi(T)|$$
		\label{L2mu}
		where $\sigma$ is defined in Proposition~\ref{phinl1} and $C$ depends on $\eta, C_P$.
	\end{prop}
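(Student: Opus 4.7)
The strategy is to derive a scalar differential inequality for $W(t) := \int_\Omega \mu^2(t,\cdot)/\bar m \, dx$ and close it using the pointwise exponential bounds on $\tilde\Phi$ already established in Proposition \ref{phinl1}. To get the differential inequality, I differentiate $W$ and use the equation for $\mu$ rewritten via $D\mu + \mu D \bar u = \bar m D z$ with $z := \mu/\bar m$; integrating by parts yields
\[
\dot W(t) = -2\int_\Omega \bar m|Dz|^2 \, dx - 2\int_\Omega \bar m \langle Dz,Dv\rangle \, dx - 2\int_\Omega \mu \langle Dz,Dv\rangle \, dx.
\]
Young's inequality on the two cross terms, together with the $L^\infty$ smallness $\|\mu\|_\infty + \|Dv\|_\infty \le \Theta$ and the weighted Poincar\'e inequality (applicable since $\int z\bar m = \int \mu = 0$), gives
\[
\dot W + c_0 W \le C \rho, \qquad \rho(t) := \int_\Omega \bar m |Dv|^2 \, dx,
\]
with $c_0, C$ depending on $C_P, \|\bar m\|_\infty$; by shrinking $\Theta$ one may arrange $c_0 \ge \sigma - \delta$.

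To turn the inhomogeneous term $\rho$ into something controllable, I re-read the computation leading to \eqref{part3} in the proof of Proposition \ref{phinl1}: it in fact delivers the pointwise inequality $c_1 e^{-\delta t}\rho(t) \le -\dot{\tilde\Phi}(t)$ for an explicit $c_1 > 0$. Multiplying the differential inequality by the integrating factor $e^{(\sigma-\delta)t}$, integrating on $[0,t]$, and substituting this bound (using that $(\sigma-\delta)+\delta = \sigma$) gives
\[
W(t) \le W(0)e^{-(\sigma-\delta)t} + \frac{C}{c_1}e^{-(\sigma-\delta)t}\int_0^t e^{\sigma s}(-\dot{\tilde\Phi}(s))\, ds.
\]
An integration by parts rewrites the residual integral as $\tilde\Phi(0) - e^{\sigma t}\tilde\Phi(t) + \sigma \int_0^t e^{\sigma s}\tilde\Phi(s)\,ds$, and each of these three pieces is controlled by the two-sided bound $|\tilde\Phi(s)| \le |\tilde\Phi(0)|e^{-\sigma s} + |\tilde\Phi(T)|e^{-\sigma(T-s)}$ that follows immediately from \eqref{phidelta}. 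Converting back from $\tilde\Phi$ to $\Phi$ via $|\tilde\Phi(0)| = |\Phi(0)|$ and $|\tilde\Phi(T)|e^{\delta T} = |\Phi(T)|$, the pieces involving $|\Phi(0)|$ come out with the weight $e^{-(\sigma-\delta)t}$ while those involving $|\Phi(T)|$ carry $e^{-(\sigma+\delta)(T-t)}$, reproducing the shape of the claimed estimate with $\lambda_1 = C(W(0) + |\Phi(0)|)$ and $\lambda_2 = C|\Phi(T)|$.

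The delicate point is the signed integral $\sigma\int_0^t e^{\sigma s}\tilde\Phi(s)\,ds$ produced by the integration by parts: after bounding $|\tilde\Phi(s)|$, the contribution proportional to $|\Phi(0)|$ comes out as $\sigma t |\Phi(0)|$, generating a polynomial pre-factor in front of $e^{-(\sigma-\delta)t}$. This is harmless, since $t\, e^{-\alpha t}$ is bounded by a constant depending only on $\alpha$ for any $\alpha > 0$, and the resulting infinitesimal loss of exponential rate is absorbed into the freedom in choosing $\sigma$ slightly smaller than the one of Proposition \ref{phinl1} (which only affects the universal constant $\sigma$, still depending on $\eta$ and $C_P$). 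All resulting constants depend only on $\eta, C_P, C_f, \|\bar m\|_\infty$, as required.
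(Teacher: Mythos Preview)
Your approach is correct and is a legitimate alternative, but it differs from the paper's in a way that costs you a little precision.

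The paper works with $e^{-\delta t}W(t)$ rather than $W(t)$ itself: multiplying the $\mu$-equation by $z e^{-\delta t}$ and integrating gives, after one Young inequality,
\[
\int_\Omega \frac{\mu^2(t_2)}{\bar m}e^{-\delta t_2}\,\di x \le \int_\Omega \frac{\mu^2(t_1)}{\bar m}e^{-\delta t_1}\,\di x + c\big(\tilde\Phi(t_1)-\tilde\Phi(t_2)\big),
\]
using the integrated bound \eqref{nuovaint} on the right-hand side. For $t\ge 1$ the paper then invokes the mean value theorem on $[t-1,t]$ to find $\tau$ with $e^{-\delta\tau}W(\tau)\le c(\tilde\Phi(t-1)-\tilde\Phi(t))$, and propagates from $\tau$ to $t$ with the displayed inequality. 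This two-step localisation delivers the bound with \emph{exactly} the $\sigma$ of Proposition~\ref{phinl1}, and never needs to further shrink $\Theta$.

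Your Gr\"onwall route, by contrast, produces after integration by parts the term $\sigma t\,|\Phi(0)|\,e^{-(\sigma-\delta)t}$, which you correctly note forces an $\epsilon$-loss in the rate. That is harmless for the downstream arguments (they only need \emph{some} positive $\sigma$ depending on $\eta,C_P$), but it does not literally reproduce the statement, which pins $\sigma$ to the value from Proposition~\ref{phinl1}. Similarly, your requirement $c_0\ge\sigma-\delta$ is satisfiable (since $c_0\approx C_P^{-1}>\eta/(32C_P)=\sigma$ for small $\Theta$), but it means implicitly tightening the $\Theta$ of Proposition~\ref{phinl1}; the paper's argument avoids this coupling entirely. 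In short: your proof is sound modulo an inessential weakening of the constants, while the paper's mean-value-theorem trick is the sharper and cleaner device here.
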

	\begin{proof}
		We start from the equation for $\mu$ and we multiply it by $\frac{\mu}{\bar{m}} e^{-\delta t}$. Then, we integrate by parts with respect to $x$ to get,
		with $z=\frac{\mu}{\bar{m}}$ and $D\bar u = -D \bar m / \bar m$ as before,
		\begin{equation*}
			\int_{\Omega} \partial_t \left( \frac{\mu^2}{2\bar{m}} e^{-\delta t}  \right) + \delta \frac{\mu^2}{2\bar{m}} e^{-\delta t} + e^{-\delta t} \bar{m} |Dz|^2 + e^{-\delta t} \mu \langle Dv,Dz \rangle + e^{-\delta t} \bar{m} \langle Dv,Dz \rangle \di x = 0.
		\end{equation*}
		We then use a Young inequality on the last term to get
		\begin{equation*}
			\frac{\di}{\di t} \int_{\Omega} \frac{\mu^2}{2\bar{m}} e^{-\delta t} \di x + \int_{\Omega} \delta e^{-\delta t}  \frac{\mu^2}{2\bar{m}} + \frac{1}{2} \bar{m} |Dz|^2 e^{-\delta t} \di x \le \int_{\Omega} \frac{1}{2} e^{-\delta t} \bar{m} |Dv|^2 - e^{-\delta t} \mu \langle Dv,Dz \rangle \di x.
		\end{equation*}
		Integration in time between $t_1$ and $t_2$ yields
		\begin{multline}
				\int_{\Omega} \frac{\mu^2(t_2,x)}{\bar{m}(x)} e^{-\delta t_2} \di x  \le \\ \int_{\Omega} \frac{\mu^2(t_1,x)}{\bar{m}(x)} e^{-\delta t_1} \di x + \int_{t_1}^{t_2} \int_{\Omega} e^{-\delta t} \bar{m}|Dv|^2 \di x \di t - 2 \int_{t_1}^{t_2} \int_{\Omega} e^{-\delta t} \mu \langle Dv,Dz \rangle \di x \di t \le \\
				 \le \int_{\Omega} \frac{\mu^2(t_1,x)}{\bar{m}(x)} e^{-\delta t_1} \di x + \int_{t_1}^{t_2} \int_{\Omega} e^{-\delta t} \left( \bar{m}|Dv|^2 + \Theta \frac{\mu^2}{\bar{m}} + \Theta \bar{m}|Dz|^2 \right) \di x \di t.
			\label{muintpart}
		\end{multline}
		Plugging \eqref{nuovaint} into~\eqref{muintpart} yields
		\begin{equation}
				\int_{\Omega} \frac{\mu^2(t_2,x)}{\bar{m}(x)} e^{-\delta t_2} \di x \le  \int_{\Omega} \frac{\mu^2(t_1,x)}{\bar{m}(x)} e^{-\delta t_1} \di x + c \left( \tilde{\Phi}(t_1)  - \tilde{\Phi}(t_2)  \right),
			\label{muintpart2}
		\end{equation}
		where $c$ depends on $\eta, C_P$. 
		
		Fix now $t \in [1,T]$. There exists $\tau \in (t-1,t)$ such that
		\begin{equation*}
			\int_{\Omega} \frac{\mu^2(\tau,x)}{\bar{m}(x)} e^{-\delta \tau} \di x = \int_{t-1}^{t} \int_{\Omega} \frac{\mu^2(s,x)}{\bar{m}(x)} e^{-\delta s} \di x \di s \le c \left( \tilde{\Phi}(t-1)  - \tilde{\Phi}(t)  \right),
		\end{equation*}
		again by \eqref{nuovaint}. Then, using~\eqref{muintpart2} with $t_1 = \tau$ and $t_2 = t$,
		\begin{equation*}
				\int_{\Omega} \frac{\mu^2(t,x)}{\bar{m}(x)} e^{-\delta t} \di x \le \int_{\Omega} \frac{\mu^2(\tau,x)}{\bar{m}(x)} e^{-\delta \tau} \di x +c \left( \tilde{\Phi}(\tau)  - \tilde{\Phi}(t)  \right) \le c \left(\tilde{\Phi}(\tau) + \tilde{\Phi}(t-1) -2 \tilde{\Phi}(t)  \right),
		\end{equation*}
		whose right-hand side can be bounded, using \eqref{phidelta}, by
		\[
		2c\left( e^{\sigma} e^{-\sigma t} \left| \Phi(0) \right| + e^{-\sigma(T-t)} e^{-\delta T}  \left| \Phi(T) \right| \right).
		\]
		We have a similar result for $t \in [0,1]$, since by \eqref{muintpart2}
		\begin{equation*}
				\int_{\Omega} \frac{\mu^2(t,x)}{\bar{m}(x)} e^{-\delta t} \di x \le  \int_{\Omega} \frac{\mu_0^2(x)}{\bar{m}(x)} \di x + c \left( \tilde{\Phi}(0)  - \tilde{\Phi}(t)  \right),
		\end{equation*}
		from which we obtain the thesis.
	\end{proof}
	
	\begin{prop}
		Suppose that \eqref{integrab} and \Sp holds, together with the following condition on $\mu$ and $Dv$:
		\begin{equation*}
			\sup_{t \in [0,T]} \left \| \mu(t,\cdot) \right \|_{L^\infty(\Omega)} + \sup_{t \in [0,T]} \left \| Dv(t,\cdot) \right \|_{L^\infty(\Omega)} \le \Theta,
		\end{equation*}
		where $\Theta$ is as in Proposition~\ref{phinl1}.
		Then, the following inequality holds:
		\begin{equation*}
			\left \| \sqrt{\bar{m}(\cdot)} |Dv|(t,\cdot) \right \|^2_{L^2(\Omega)} = \left \|\sqrt{\bar{m}(\cdot)} |Du(t,\cdot)-D\bar{u}(\cdot)| \right \|^2_{L^2(\Omega)} \le \lambda_3 e^{-(\sigma-\delta) t} + \lambda_4 e^{-(\sigma+\delta) (T-t)},
		\end{equation*}
		for all $t \in [0,T]$, with
		$$\lambda_3=C|\Phi(0)|, \qquad \lambda_4=C |\Phi(T)| + \| \sqrt{\bar{m}}|Dv|^2(T,\cdot)\|^2_{L^2(\Omega)},$$
		where $\sigma$ is defined in Proposition~\ref{phinl1} and $C$ depends on $\eta, C_P, \Theta,\bar{u},C_f,\bar{m}$.
		\label{L2Dv}
	\end{prop}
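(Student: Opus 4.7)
The approach parallels the proof of Proposition \ref{L2mu}, with $v$ playing the role of $\mu$ and time reversed, since the equation for $v$ is backward in time. A natural test function is $\bar m\, \hat v\, e^{-\delta t}$, where $\hat v(t,x) := v(t,x) - \int_\Omega \bar m(y)\, v(t,y)\,\di y$ has zero $\bar m$-mean, so that the weighted Poincaré inequality applies. Multiplying the equation for $v$ in \eqref{nlsys} by this test function, integrating by parts and using the identity $D\bar m = -\bar m D\bar u$, one obtains
\begin{equation*}
-\frac{\di}{\di t}\left[\frac{e^{-\delta t}}{2}\int_\Omega \bar m\, \hat v^2\right] + \frac{\delta}{2}\,e^{-\delta t}\int_\Omega \bar m\, \hat v^2 + \int_\Omega e^{-\delta t}\bar m |Dv|^2 = \int_\Omega e^{-\delta t}\bar m\, \hat v\left[f_m(\bar m)\mu + \xi\mu^2 - \frac{|Dv|^2}{2}\right]\di x.
\end{equation*}
Estimating the right-hand side via Cauchy--Schwarz, the bound $(\int\bar m\hat v^2)^{1/2}\le C_P^{1/2}(\int\bar m|Dv|^2)^{1/2}$, and the smallness $|\mu|, |Dv|\le\Theta$ to absorb a fraction of $\int\bar m|Dv|^2$ on the left, one arrives (for $\Theta$ small) at
\begin{equation*}
-\frac{\di}{\di t}\left[\frac{e^{-\delta t}}{2}\int_\Omega \bar m\, \hat v^2\right] + c_0 \int_\Omega e^{-\delta t}\bar m |Dv|^2 \le C\, e^{-\delta t}\int_\Omega \frac{\mu^2}{\bar m}\,\di x,
\end{equation*}
for some $c_0 > 0$. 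Integrating between $t_1\le t_2$ provides backward propagation of $\int\bar m\hat v^2$ and recovers the integrated bound on $\int e^{-\delta s}\bar m |Dv|^2\,\di s$ already ensured by \eqref{nuovaint}.

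To upgrade this to a pointwise bound on $\int\bar m|Dv|^2(t)$, I follow the scheme of Proposition \ref{L2mu}: for $t\in[0, T-1]$, the mean value theorem applied to \eqref{nuovaint} on $(t,t+1)$ produces $\tau\in(t,t+1)$ such that $e^{-\delta\tau}\int\bar m|Dv|^2(\tau)\le c[\tilde\Phi(t)-\tilde\Phi(t+1)]$, and the control is then propagated backward from $\tau$ to $t$. The pointwise backward propagation rests on a second differential identity, obtained by testing the $v$-equation against $-\text{div}(\bar m Dv)\,e^{-\delta t}$: exploiting the identity $-\Delta v + \langle Dv, D\bar u\rangle = -\text{div}(\bar m Dv)/\bar m$, one finds
\begin{equation*}
-\frac{1}{2}\frac{\di}{\di t}\!\left[e^{-\delta t}\int_\Omega \bar m|Dv|^2\right] + \frac{\delta}{2}e^{-\delta t}\int_\Omega \bar m|Dv|^2 + e^{-\delta t}\int_\Omega \frac{[\text{div}(\bar m Dv)]^2}{\bar m} = e^{-\delta t}\int_\Omega D\!\left[f_m(\bar m)\mu + \xi\mu^2 - \tfrac{|Dv|^2}{2}\right]\!\cdot\!\bar m\, Dv.
\end{equation*}
A short-time Grönwall estimate on $(t,\tau)$, combined with Proposition \ref{L2mu} for $\int\mu^2/\bar m$, with \eqref{nuovaint} to handle integrated cross-terms, and with the exponential decay of $\tilde\Phi$ from \eqref{phidelta}, delivers the claimed bound for $t\in[0,T-1]$. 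For $t\in[T-1, T]$, the same propagation applied with $\tau = T$ produces the terminal contribution $\|\sqrt{\bar m}|Dv|(T,\cdot)\|^2_{L^2}$ appearing in $\lambda_4$.

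The main technical obstacle is the pointwise backward propagation itself. Unlike Proposition \ref{L2mu}, where a single test function propagates $\int\mu^2/\bar m$ both in integral and pointwise form, here the first identity controls $\int\bar m\hat v^2$ pointwise but $\int\bar m|Dv|^2$ only integrally, forcing the use of the secondary test against $-\text{div}(\bar m Dv)$. Bounding the right-hand side of this second identity requires controlling $D\mu$ and $D^2 v$, precisely the quantities rendered bounded by the integrability conditions \eqref{integrab}, and exploiting the smallness of $|\mu|, |Dv|$ to extract factors of $\Theta$ that keep the short-time Grönwall constant of order $O(1)$.
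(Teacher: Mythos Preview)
Your second test function $-\text{div}(\bar m\,Dv)\,e^{-\delta t}$ is, after one integration by parts, exactly the paper's approach: differentiate the $v$-equation in $x_j$, multiply by $\bar m\,v_j\,e^{-\delta t}$, integrate, and sum over $j$. The paper then arrives directly at
\[
-\frac{\di}{\di t}\int_\Omega \frac{\bar m|Dv|^2}{2}\,e^{-\delta t}\,\di x \;\le\; C\int_\Omega\Big(\bar m|Dv|^2 + \frac{\mu^2}{\bar m}\Big)e^{-\delta t}\,\di x,
\]
with $C=C(\Theta,\bar u,C_f,\bar m)$, integrates on $[t_1,t_2]$, and invokes \eqref{nuovaint} together with the mean value argument on $[t,t+1]$ exactly as you outline. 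No Gr\"onwall step is needed, and the first test function $\bar m\,\hat v$ is superfluous (as you already note, it only reproduces what \eqref{nuovaint} gives).

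The gap is in your handling of the right-hand side of the second identity. You propose to control $D\mu$ and $D^2 v$ via the qualitative bounds in \eqref{integrab}, extracting factors of $\Theta$ from $|\mu|,|Dv|$. But the term $f_m(\bar m)\,D\mu\cdot\bar m\,Dv$ carries no small prefactor, and \eqref{integrab} only asserts that $D\mu, D^2 v\in L^\infty$, with no quantitative control in terms of $\eta,C_P,\Theta,\bar u,C_f,\bar m$. Any constant produced this way would depend on the particular solution, contradicting the statement of the proposition. The correct move is to \emph{undo} the integration by parts you performed on the right: keep it as
\[
-\int_\Omega\big(f_m(\bar m)\mu+\xi\mu^2-\tfrac{|Dv|^2}{2}\big)\,\text{div}(\bar m\,Dv)\,e^{-\delta t}\,\di x,
\]
apply Young's inequality, and absorb the resulting $\int[\text{div}(\bar m\,Dv)]^2/\bar m$ into the coercive term on the left. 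The remainder is bounded by $C\int\bar m\big(f_m\mu+\xi\mu^2-|Dv|^2/2\big)^2\le C\int(\mu^2/\bar m+\Theta^2\,\bar m|Dv|^2)$, yielding the desired differential inequality with the right dependencies. The paper carries out the equivalent computation component by component, absorbing the $v_{jj}$ terms into $\int\bar m|Dv_j|^2$.
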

	\begin{proof}
		Fix $j=1,...,n$. Differentiate the equation for $v$ with respect to $x_j$, multiply it by $\bar{m} v_j e^{-\delta t}$, where $v_j:=\frac{\partial v}{\partial x_j}$, to get
		\begin{multline*}
			-\bar{m} v_j \left( \partial_t v_j \right) e^{-\delta t} - \Delta v_j \bar{m} v_j e^{-\delta t} + \left( \langle Dv,D\bar{u} \rangle \right)_j \bar{m} v_j e^{-\delta t} - (f_m(\bar{m}) \mu)_j \bar{m} v_j e^{-\delta t} -\\
			- \left( \xi \mu^2 \right)_j \bar{m} v_j e^{-\delta t} + \bar{m} v_j \langle Dv,Dv_j \rangle e^{-\delta t} + \delta \bar{m} v_j^2 e^{-\delta t}= 0.
		\end{multline*}
		Now, we integrate with respect to $x$; this yields
		\begin{multline*}
			-\frac{d}{dt} \int_{\Omega} \frac{\bar{m}v_j^2 e^{-\delta t}}{2} \di x + \underbrace{\int_{\Omega} \frac{\delta}{2} \bar{m} v_j^2 e^{-\delta t} \di x}_{\geq 0} = \int_{\Omega} \Delta v_j \bar{m} v_j e^{-\delta t} + (f_m(\bar{m}) \mu)_j \bar{m} v_j e^{-\delta t} \di x + \\
			+ \int_{\Omega} \left( \xi \mu^2 \right)_j \bar{m} v_j e^{-\delta t} -  \bar{m} v_j \langle Dv,Dv_j \rangle  e^{-\delta t} - \left( \langle Dv,D\bar{u} \rangle \right)_j \bar{m} v_j e^{-\delta t} \di x,
		\end{multline*}
		therefore, integrating by parts and using the identity $D \bar m = - \bar m D \bar u $,
		\begin{multline*}
			-\frac{d}{dt} \int_{\Omega} \frac{\bar{m}v_j^2 e^{-\delta t}}{2} \di x  \le \int_{\Omega} e^{-\delta t} \left( - \bar{m} |Dv_j|^2 + \bar{m} \langle D\bar{u},Dv_j \rangle v_j - f_m(\bar{m}) \mu \bar{m} v_{jj} + \bar{m} \mu \bar{u}_j f_{m}(\bar{m}) v_j \right) \di x +  \\
			+ \int_{\Omega} e^{-\delta t} \left( - \xi \mu^2 \bar{m} v_{jj} + \xi \mu^2 \bar{m} \bar{u}_j v_j - \bar{m} \bar{u}_j v_j \langle Dv,D\bar{u} \rangle + \bar{m} \langle Dv,D\bar{u} \rangle v_{jj} - \bar{m} v_j \langle Dv,Dv_j \rangle \right) \di x.
		\end{multline*}
		Moving the first term on the right hand side to the left, and employing Young's inequality we get
		\begin{multline*}
			-\frac{d}{dt} \int_{\Omega} \frac{\bar{m}v_j^2 e^{-\delta t}}{2} \di x + \int_{\Omega} \bar{m}|Dv_j|^2 e^{-\delta t} \di x \le 
			\int_{\Omega} e^{-\delta t} \Big( \frac{\bar{m}|Dv_j|^2}{5} + \frac{5}{4} \bar{m} |D\bar{u}|^2 v_j^2 + \frac{5}{4} (f_m(\bar{m}))^2 \bar{m}{\mu^2} \\
			  \frac{\bar{m}}{5} \left| v_{jj} \right|^2 + (f_m(\bar{m}))^2 \bar{m} \frac{\mu^2}{2} + \frac{\bar{m} \bar{u}_j^2  v_j^2}{2} + \frac{5}{4} \xi^2 \Theta^2 \bar{m}\mu^2 + \frac{\bar{m}}{5} \left| v_{jj} \right|^2 + \frac{\xi^2}{2} \Theta^2 \bar{m} \mu^2 +  \frac{1}{2}\bar{u}_j^2\bar{m}v_j^2\\
			 + \frac{\bar{m}|Dv|^2}2 + \frac{\bar{m}v_j^2 |D\bar{u}|^2\bar{u}_j^2}{2} + \frac{\bar{m}}{5} \left| v_{jj} \right|^2  + \frac{5}{4} \bar{m} |D\bar{u}|^2|Dv|^2 + \frac{\bar{m}|Dv_j|^2}{5} + \frac{5}{4} \Theta \bar{m} |Dv|^2  \Big) \di x \\
			\le \int_{\Omega} e^{-\delta t} \bar{m}|Dv_j|^2 + C_1 \bar{m}|Dv|^2 e^{-\delta t} + C_2  \frac{\mu^2}{\bar{m}} e^{-\delta t} \di x,
		\end{multline*}
		where 
		\begin{equation*}
			C_1:=\frac{7}{2} \|D\bar{u}\|_{L^{\infty}(\Omega)}^2 + \frac {\|D\bar{u}\|_{L^{\infty}}^4 }{2} + \frac{1}{2} + \frac{5}{4} \Theta =C_1(\Theta,\bar{u})
		\end{equation*}
		and
		\begin{equation*}
			C_2:=\left(\frac{5}{4}+\frac12\right) C_f^2 \| \bar{m}\|_{L^{\infty}}^2  + \left(\frac{5}{4}+\frac12\right) C_f^2 \Theta^2 \|\bar{m}\|^2_{L^{\infty}}  =C_2(C_f,\bar{m},\Theta,\bar{u}).
		\end{equation*}
		This implies that
		\begin{equation*}
			-\frac{d}{dt} \int_{\Omega} \frac{\bar{m}v_j^2}{2} e^{-\delta t} \di x \le C \int_{\Omega} \left( \bar{m}|Dv|^2 + \frac{\mu^2}{\bar{m}} \right) e^{-\delta t} \di x,
		\end{equation*}
		where $C = C(\Theta,\bar{u},C_f,\bar{m}) :=\max\{ C_1,C_2 \}$. Then, after summing over $j=1,...,n$, the previous becomes (after replacing $C$ with $nC$)
		\begin{equation}
			-\frac{d}{dt} \int_{\Omega} \frac{\bar{m}|Dv|^2}{2} e^{-\delta t} \di x \le C \int_{\Omega} \left( \bar{m}|Dv|^2 + \frac{\mu^2}{\bar{m}} \right) e^{-\delta t} \di x.
			\label{mdv}
		\end{equation}
		
		We follow now the same arguments of Proposition~\ref{L2mu} to get the desired result. First of all, we integrate~\eqref{mdv} on both sides between two generic times $t_1$ and $t_2$ ($t_1 < t_2$) to get
		\begin{equation*}
			\int_{\Omega} \frac{\bar{m}}{2} |Dv|^2(t_1,x) e^{-\delta t_1} \di x \le \int_{\Omega} \frac{\bar{m}}{2} |Dv|^2(t_2,x) e^{-\delta t_2} \di x + C \int_{t_1}^{t_2} \int_{\Omega} \left( \bar{m}|Dv|^2 + \frac{\mu^2}{\bar{m}} \right) e^{-\delta s} \di x \di s,
		\end{equation*}
		which, according to~\eqref{nuovaint} and \eqref{tildephiineq}, can be bounded from above as follows
		\begin{multline*}
			\le \int_{\Omega} \frac{\bar{m}}{2} |Dv|^2(t_2,x) e^{-\delta t_2} \di x + c\left( \tilde{\Phi}(t_1) - \tilde{\Phi}(t_2) \right) \le \\
			\le \int_{\Omega} \frac{\bar{m}}{2} |Dv|^2(t_2,x) e^{-\delta t_2} \di x + c \left( \tilde{\Phi}(0) e^{-\sigma t_1} - \tilde{\Phi}(T) e^{-\sigma(T-t_2)} \right).
		\end{multline*}
		Fix now $t \in [0,T-1]$. The Mean Value Theorem guarantees the existence of some $\tau \in [t,t+1]$ such that
		\begin{equation*}
			\int_{\Omega} \frac{\bar{m}}{2} |Dv|^2(\tau,x) e^{-\delta \tau} \di x = \int_{t}^{t+1} \int_{\Omega} \frac{\bar{m}}{2} |Dv|^2(s,x) e^{-\delta s} \di x \di s.
		\end{equation*}
		Then, choosing $t_1=t, t_2=\tau$, we get, again in view of \eqref{nuovaint} and \eqref{tildephiineq}, 
		\begin{multline*}
			\int_{\Omega} \frac{\bar{m}}{2} |Dv|^2(t,x) e^{-\delta t} \di x \le \int_{\Omega} \frac{\bar{m}}{2} |Dv|^2(\tau,x) e^{-\delta \tau} \di x + c \left( \tilde{\Phi}(0) e^{-\sigma t} - \tilde{\Phi}(T) e^{-\sigma(T-\tau)} \right) \le \\
			\le \int_t^{t+1} \int_{\Omega} \frac{\bar{m}}{2} |Dv|^2(s,x) e^{-\delta s} \di x + c \left( \tilde{\Phi}(0) e^{-\sigma t} - \tilde{\Phi}(T) e^{-\sigma(T-\tau)} \right) \le \\
			\le c \left(\tilde{\Phi}(t) - \tilde{\Phi}(t+1)+ \tilde{\Phi}(0) e^{-\sigma t} - \tilde{\Phi}(T) e^{-\sigma(T-\tau)} \right) \\ \le c \left( 2\tilde{\Phi}(0) e^{-\sigma t} + |\tilde{\Phi}(T)| e^{-\sigma(T-t-1)} + |\tilde{\Phi}(T)|e^{-\sigma(T-\tau)} \right) \le \\ c\left( 2\tilde{\Phi}(0) e^{-\sigma t} + 2e^\sigma |\tilde{\Phi}(T)| e^{-\sigma(T-t)} \right).
		\end{multline*}
		Finally, if $t \in [T-1,T]$, we choose $t_1=t, t_2=T$ to get (note that $1 \le e^\sigma e^{-\sigma(T-t)}$)
		\begin{multline*}
			\int_{\Omega} \frac{\bar{m}}{2} |Dv|^2(t,x) e^{-\delta t} \di x \le \int_{\Omega} \frac{\bar{m}}{2} |Dv|^2(T,x) e^{-\delta T} \di x + c \left( \tilde{\Phi}(0) e^{-\sigma t} - \tilde{\Phi}(T)\right) \\
			\le e^\sigma e^{-\delta T-\sigma(T-t)} \int_{\Omega} \frac{\bar{m}}{2} |Dv|^2(T,x)  \di x + c \tilde{\Phi}(0) e^{-\sigma t} + c e^\sigma |\tilde{\Phi}(T)| e^{-\sigma(T-t)}.
		\end{multline*}
		Putting all together, we deduce that
		\[
			\left \| \sqrt{\bar{m}}|Dv|(t,\cdot) \right\|^2_{L^2(\Omega)} e^{-\delta t} \le 2c|\tilde{\Phi}(0)| e^{-\sigma t} +2ce^\sigma |\tilde{\Phi}(T)| e^{-\sigma(T-t)}+e^\sigma e^{-\delta T-\sigma(T-t)}\left\| \sqrt{\bar{m}}|Dv|(T,\cdot) \right\|^2_{L^2(\Omega)}
		\]
		from which we derive the conclusion, after recalling that $\tilde{\Phi}(t) =e^{-\delta t} \Phi(t)$.
	\end{proof}
	

\section{$L^\infty$ turnpike a priori estimates} \label{nlturnlinf}

	The goal of this section is to prove Theorem \ref{apriori}. To this aim, we need to improve the $L^2$ decay estimates of the previous Section into $L^\infty$ ones; this will require some technical work, and some further conditions on the initial / terminal data.
	
	Throughout the section, $v, \mu, \Phi$ are as in the previous one; the constants $\sigma, \Theta$, $\lambda_1, \lambda_2, \lambda_3, \lambda_4$ are those appearing in Propositions \ref{phinl1}, \ref{phinl2}, \ref{L2Dv}. Finally, we set
	$$\sigma_1=\frac{\sigma-\delta}{n+1}, \qquad \sigma_2=\frac{\sigma+\delta}{n+1}.$$
	
	\begin{rmk} \label{nurmk3}
		As in Section~\ref{nlturnl2}, we specify that all the estimates in this section hold also for solutions to~\eqref{nupde} for all $\nu \in [0,1]$. 
	\end{rmk}

	\begin{prop} \label{propfix1}
		Suppose that the assumptions of Theorem \ref{apriori} are in force. Then, there exist $\bar{\eps} \in (0,\frac{1}{2}) $ and $\bar{\lambda} > 0$, depending on $\eta,\bar{m},C_f$, but independent of $T$, such that the following is true: for all $\gamma \le \bar{\gamma}$, $\eps \le \bar \eps$, if
		\begin{equation}
			\| \mu(t,\cdot)\|_{L^{\infty}(\Omega)} \le \eps \left( e^{-\sigma_1 t} + e^{-\sigma_2(T-t)} \right) \qquad \forall t \in [0,T],
			\label{epsmu}
		\end{equation}
		\begin{equation}
			\|\mu_0(\cdot)\|_{L^{\infty}(\Omega)} + \| v_T(\cdot) \|_{W^{1,\infty}(\Omega)} < \gamma
			\label{cond1}
		\end{equation}
		then any solution $(v,\mu)$ to the non-linear system~\eqref{nlsys} satisfies the following estimates:
		\begin{equation}
			\|\mu(t,\cdot)\|_{L^{\infty}(\Omega)} < \Theta \qquad \|v(t,\cdot)\|_{W^{1,\infty}(\Omega)} < \Theta,
			\label{muDvestimates}
		\end{equation}
		for all $t \in [0,T]$, where $\Theta$ is as in Proposition~\ref{phinl1}.
	\end{prop}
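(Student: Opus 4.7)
The first bound, $\|\mu(t,\cdot)\|_{L^{\infty}}<\Theta$, is immediate from \eqref{epsmu}: since $\sigma_1,\sigma_2>0$ we have $\|\mu(t)\|_{\infty}\le 2\eps\le 2\bar\eps$, so it suffices to fix $\bar\eps\le\Theta/2$. For the $L^{\infty}$ bound on $v$ itself, I would argue by the maximum principle for the HJ equation. At a spatial maximum of $v(t,\cdot)$ (existing by \eqref{integrab} and, in the case $\Omega=\T^n$, by compactness), $Dv=0$ and $-\Delta v\ge 0$, so the first equation of \eqref{nlsys} reduces to the ODE inequality
\[
-\dot v^{\ast}(t)+\delta\,v^{\ast}(t)\;\le\;\|f(\bar m+\mu(t))-f(\bar m)\|_{\infty}\;\le\;C_f\|\mu(t)\|_{\infty},\qquad v^{\ast}(T)\le\lambda,
\]
and symmetrically for $-v$. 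Integrating backward from $T$ and using \eqref{epsmu} together with $\delta<\sigma$ (so that $\sigma_1+\delta$ and $\sigma_2-\delta$ remain positive), the time integral $\int_t^T e^{-\delta(s-t)}\|\mu(s)\|_{\infty}\,\di s$ is bounded by a constant multiple of $\eps$ uniformly in $T$, giving $\|v(t,\cdot)\|_{\infty}\le C(\lambda+\eps)$, which is less than $\Theta$ for $\bar\lambda,\bar\eps$ small enough.

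The main difficulty is the $L^{\infty}$ bound on $Dv$. A direct Bernstein argument applied to $W=|Dv|^{2}$ leads, at a spatial maximum of $W$, to an inequality of the form $-\dot W^{\ast}\le C_0 W^{\ast}+|Dg|^{2}$, whose backward integration produces constants growing like $e^{C_0 T}$; moreover $Dg$ involves $D\mu$, which is not controlled by our hypotheses. I would therefore proceed by a continuation argument based on linear parabolic estimates. Rewriting the equation for $v$ as
\[
-\partial_t v - \Delta v + \bigl(D\bar u+\tfrac12 Dv\bigr)\cdot Dv + \delta v \;=\; f(\bar m+\mu)-f(\bar m),
\]
under the a priori assumption $\|Dv(s,\cdot)\|_{\infty}\le\Theta$ for $s\in[t,T]$, the drift $D\bar u+\tfrac12 Dv$ is bounded in terms of $\bar u$ and $\Theta$ only, and the right-hand side is bounded by $C_f\|\mu\|_{\infty}\le 2C_f\bar\eps$. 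I would cover $[0,T]$ by windows of fixed length (say~$1$) and on each window $[t_0,t_0+1]$ apply the parabolic gradient estimates for linear divergence-form equations from the Appendix, feeding in the information on $v(t_0+1,\cdot)$ from the $L^{\infty}$ bound on $v$ established above, the $L^{\infty}$ bound on the source, and, on the final window, the $W^{1,\infty}$ norm of $v_T$. This produces a local estimate of the form $\|Dv(t_0,\cdot)\|_{\infty}\le \tilde C(\lambda+\eps)$, with $\tilde C$ depending only on $\Theta,\bar u,f$ and the window length. Choosing $\bar\lambda,\bar\eps$ so small that $\tilde C(\bar\lambda+\bar\eps)<\Theta$, the continuation closes in the standard way: the set of $t\in[0,T]$ for which $\|Dv(s,\cdot)\|_{\infty}\le\Theta$ for all $s\ge t$ is closed, open, and contains a neighborhood of $T$ by the terminal condition.

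The principal obstacle is keeping all constants uniform in $T$; this is addressed by performing the parabolic regularity step on fixed-length windows, so that the constants depend on the window length and on the a priori bound $\Theta$ for $Dv$, but not on the horizon. A secondary technical point is the final window $[T-1,T]$, where one needs the full $W^{1,\infty}$ terminal datum for $v_T$ (rather than just the $L^{\infty}$ bound on $v$): this is available precisely because of the hypothesis \eqref{cond1}. Finally, in accordance with Remark \ref{nurmk3}, the whole argument applies verbatim to solutions of the $\nu$-parameterized system \eqref{nupde} with $\nu\in[0,1]$, which is exactly what is needed for the fixed-point construction of Section \ref{fix2}.
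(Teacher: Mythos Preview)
Your proposal is correct in outline and reaches the conclusion, but it takes a different route from the paper for the gradient bound on $v$. For $\|\mu\|_\infty$ you argue exactly as the paper does. For $\|v\|_\infty$, your maximum-principle ODE is the pointwise version of the paper's duality argument (Lemma~\ref{lemv}), which tests against adjoint flows $\tilde\Gamma,\bar\Gamma$; the two are equivalent. One minor slip: your claim that $\sigma_2-\delta>0$ follows from $\delta<\sigma$ is not quite right, since $\sigma_2=\frac{\sigma+\delta}{n+1}$ and $\sigma_2-\delta>0$ would need $\sigma>n\delta$. This is harmless, because one can simply drop the factor $e^{-\delta(s-t)}\le 1$ and bound $\int_t^T(e^{-\sigma_1 s}+e^{-\sigma_2(T-s)})\,\di s\le \sigma_1^{-1}+\sigma_2^{-1}$, exactly as the paper does.

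The substantive difference is in the gradient estimate. You freeze the drift as $D\bar u+\tfrac12 Dv$ and run a continuation argument, invoking the Appendix estimate (Lemma~\ref{app2}) under the a priori bound $\|Dv\|\le\Theta$ to close the loop. The paper instead exploits the quadratic Hamiltonian via the Cole--Hopf transform $w=e^{-v/2}$ (Lemma~\ref{lemDw}): this turns the equation into a genuinely linear one with drift $D\bar u$ only, so Lemma~\ref{app2} applies to $\tilde w=w-1$ directly, with no continuation needed. Your approach is more robust (it would survive a non-quadratic Hamiltonian), while the paper's is cleaner here precisely because the quadratic structure makes the nonlinearity disappear. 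Note also that the window decomposition is not really needed in your argument: Lemma~\ref{app2} already contains a cut-off step that yields the uniform-in-$T$ interior gradient bound from $\|v\|_\infty$ alone, so once the drift is bounded (by continuation or by Cole--Hopf) a single application on $[t_0,T]$ suffices.
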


		We divide the proof of Proposition~\ref{propfix1} into two Lemmas.
	\begin{lem} \label{lemv}
		Suppose that $(v,\mu)$ is a solution to~\eqref{nlsys} and that the assumptions of Proposition~\ref{propfix1} are in force. Then,	the following estimate holds:
		\begin{equation*}
			\|v(t,\cdot)\|_{L^{\infty}(\Omega)} \le \| v_T(\cdot) \|_{L^{\infty}(\Omega)} + 2 \eps \left( \frac{1}{\sigma_1} + \frac{1}{\sigma_2} \right) C_f
		\end{equation*}
		for all $t \in [0,T]$.
	\end{lem}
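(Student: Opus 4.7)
My plan is to compare $v$, which solves a backward quadratic HJB with source term of size at most $C_f|\mu| \le C_f\eps\bigl(e^{-\sigma_1 t}+e^{-\sigma_2(T-t)}\bigr)$ (by hypothesis \eqref{epsmu}), against spatially constant super- and sub-solutions built by integrating this bound in time.

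First I would introduce the time-dependent barrier
\[
\phi(t) := C_f\eps\int_t^T\bigl(e^{-\sigma_1 s}+e^{-\sigma_2(T-s)}\bigr)\di s \;\le\; C_f\eps\Bigl(\frac{1}{\sigma_1}+\frac{1}{\sigma_2}\Bigr),
\]
so that $\phi(T)=0$ and $-\dot\phi(t)=C_f\eps\bigl(e^{-\sigma_1 t}+e^{-\sigma_2(T-t)}\bigr)$, and set $w^\pm(t):=\pm\bigl(\|v_T\|_{L^\infty(\Omega)}+\phi(t)\bigr)$. Since $w^\pm$ depend only on $t$, the spatial terms $\Delta w^\pm$, $\langle Dw^\pm,D\bar u\rangle$, $|Dw^\pm|^2/2$ all vanish, and a direct computation shows that $w^+$ is a classical supersolution and $w^-$ a subsolution of the equation for $v$, with the extra term $\delta(\|v_T\|_{L^\infty}+\phi)\ge 0$ providing further slack. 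Moreover $w^-(T)\le v_T(x)\le w^+(T)$ pointwise.

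Then, setting $\psi:=v-w^+$ and subtracting, the nonlinear term $|Dv|^2/2$ is preserved as $|D\psi|^2/2$ because $w^+$ has zero gradient, so that
\[
-\partial_t\psi - \Delta\psi + \Bigl\langle D\psi,\, D\bar u + \tfrac{1}{2}D\psi\Bigr\rangle + \delta\psi \le 0, \qquad \psi(T,\cdot)\le 0.
\]
This is a linear backward parabolic inequality with drift bounded by $\|D\bar u\|_{L^\infty}+\tfrac{1}{2}\|Dv\|_{L^\infty}$, which is finite by \eqref{integrab}. The weak maximum principle then gives $\psi\le 0$ on $[0,T]\times\Omega$, and the symmetric argument applied to $w^- - v$ yields $v\ge w^-$. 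Combining,
\[
|v(t,x)| \le \|v_T\|_{L^\infty(\Omega)}+\phi(t) \le \|v_T\|_{L^\infty(\Omega)}+C_f\eps\Bigl(\frac{1}{\sigma_1}+\frac{1}{\sigma_2}\Bigr),
\]
which is in fact sharper than the claimed estimate (the factor $2$ in the statement being harmless slack).

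The only point requiring care is the application of the maximum principle when $\Omega=\R^n$: the linearised inequality has bounded coefficients but is posed on an unbounded domain, so one must ensure that the extrema of $\psi$ are attained rather than escaping to infinity. This is where the uniform $L^\infty$ bounds on $v$ and $Dv$ provided by \eqref{integrab} are essential, since they allow a Phragm\'en--Lindel\"of type argument (or the standard trick of subtracting a small $\eps(T-t)$ to rule out interior maxima); on $\Omega=\T^n$ this step is immediate by compactness.
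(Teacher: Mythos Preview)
Your argument is correct and reaches the conclusion by a direct comparison-principle route, whereas the paper proceeds via duality. Specifically, the paper introduces the discounted function $\tilde v = v e^{-\delta t}$, tests it against two adjoint Fokker--Planck flows $\tilde\Gamma$ and $\bar\Gamma$ starting from $\delta_x$ with drifts $D\bar u$ and $D\bar u + Dv$ respectively, and reads off upper and lower bounds from the resulting representation formulas; the asymmetry between the two drifts is precisely what absorbs the sign of the quadratic term $\tfrac12|Dv|^2$. Your barrier construction achieves the same effect more elementarily: because $w^\pm$ are spatially constant, the convex term survives unchanged when subtracting, and the resulting semilinear inequality is linearised by freezing $D\bar u + \tfrac12 D\psi$ (resp.\ $D\bar u - \tfrac12 D\chi$) as a bounded drift. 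Your mean-value bound $|f(x,\bar m + \mu) - f(x,\bar m)| \le C_f|\mu|$ even saves a factor~$2$ relative to the paper, which expands to second order and estimates $|f_m\mu| + |\xi\mu^2| \le 2C_f|\mu|$ using $\|\mu\|_\infty \le 1$. The trade-off is that the duality approach handles $\Omega = \R^n$ automatically (the representation formula is global once the adjoint flows exist), whereas your route needs the Phragm\'en--Lindel\"of step you flag; on $\T^n$ your argument is the more transparent of the two.
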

	\begin{proof}
		Let $\tilde{v}(t,x):=v(t,x)e^{-\delta t}$. First, we have to establish the equation satisfied by $\tilde{v}$:
		\begin{equation}
			- \partial_t \tilde{v} - \Delta \tilde{v} + \langle D\tilde{v},D\bar{u} \rangle - f_m(\bar{m}) \mu e^{-\delta t} -\xi \mu^2 e^{-\delta t} + \frac{|Dv|^2}{2} e^{-\delta t} = 0.
			\label{eqtildev}
		\end{equation}
		Then, consider $\tilde{\Gamma}$ solution to
		\begin{equation*}
			\begin{cases}
				\partial_s \tilde{\Gamma} - \Delta \tilde{\Gamma} - \text{div}(\tilde{\Gamma} D\bar{u}) = 0,& \text{ on } (t,T) \times \Omega \\
				\tilde{\Gamma}(t,\cdot) = \delta_x,& \text{ on } \{ s=t \} \times \Omega \\
			\end{cases}
		\end{equation*}
		multiply it by $\tilde v$ and then multiply~\eqref{eqtildev} by $\tilde{\Gamma}$
		After integration with respect to time and space, one gets
		\begin{multline*}
			\int_t^T \int_{\Omega} - \partial_s \tilde{v} \tilde{\Gamma} - \left( \Delta \tilde{v} \right) \tilde{\Gamma} + \langle D\tilde{v},D\bar{u} \rangle \tilde{\Gamma} - f_m(\bar{m}) \mu e^{-\delta s} \tilde{\Gamma} - \xi \mu^2 e^{-\delta s} \tilde{\Gamma} \, \di y \di s + \\
			+ \int_t^T \int_{\Omega} \frac{1}{2} |Dv|^2 e^{-\delta s} \tilde{\Gamma} - \partial_s \tilde{\Gamma} \tilde{v} + \Delta \tilde{\Gamma} \tilde{v} + \text{div}(\tilde{\Gamma} D\bar{u}) \tilde{v}\, \di y \di s = 0.
		\end{multline*}
		Integration by parts in space yields
		\begin{equation*}
			\int_{\Omega} - \tilde v \tilde{\Gamma} \bigg |_{t}^{T} \di y = \int_t^T \int_{\Omega} f_m(\bar{m})\mu e^{-\delta s} \tilde{\Gamma} + \xi \mu^2 e^{-\delta s} \tilde{\Gamma} - \frac{1}{2} |Dv|^2 \tilde{\Gamma} e^{-\delta s} \di y \di s.
		\end{equation*}
		Using the initial condition on $\tilde{\Gamma}$, the fact that $\tilde{\Gamma}$ is a flow of probability measures and the square norm of the gradient is positive, we derive that
		$$\tilde{v}(t,x) \le \int_{\Omega} \tilde v(T,y) \tilde{\Gamma}(T,y) \di y + \int_t^T \int_{\Omega} f_m(\bar{m})\mu e^{-\delta s} \tilde{\Gamma} + \xi \mu^2 e^{-\delta s} \tilde{\Gamma} \, \di y \di s \le$$
		$$\le \|\tilde{v}_T(\cdot)\|_{L^{\infty}(\Omega)} + 2C_f \int_t^T \|\mu(s,\cdot)\|_{L^{\infty}(\Omega)} e^{-\delta s} \di s,$$
		where we used the fact that $ \| \mu(s,\cdot) \|_{L^{\infty}(\Omega)} \le 1$ and the Hölder's inequality. Now, we use assumption~\eqref{epsmu} to get
		\begin{equation}
			\tilde{v}(t,x) \le \|\tilde{v}_T(\cdot)\|_{L^{\infty}(\Omega)} +2 C_f \int_t^T \eps \left( e^{-\sigma_1 s} + e^{-\sigma_2(T-s)} \right) e^{-\delta s} \di s.
			\label{side1}
		\end{equation}
		To get the other side of the inequality, we define a second flow of probability, called $\bar{\Gamma}$, such that
		\begin{equation*}
			\begin{cases}
				\partial_s \bar{\Gamma} - \Delta \bar{\Gamma} - \text{div}(\bar{\Gamma}(D\bar{u} + Dv)) = 0,& \text{ on } (t,T) \times \Omega \\
				\bar{\Gamma}(t,\cdot) = \delta_x,& \text{ on } \{ s=t \} \times \Omega \\
			\end{cases}
		\end{equation*}
		and we do the same computations as before. We get
		\begin{equation*}
			\tilde{v}(t,x)= \int_{\Omega} \tilde{v}_T(y) \bar{\Gamma}(T,y) \di y + \int_t^T \int_{\Omega} f_m(\bar m) \mu e^{-\delta s}  \bar{\Gamma} + \xi \mu^2 e^{-\delta s}  \bar{\Gamma} - \frac{1}{2} |Dv|^2  e^{-\delta s}  \bar{\Gamma} + \langle Dv,D\tilde{v} \rangle \bar{\Gamma} \, \di y \di s.
		\end{equation*}
		Recall that $\tilde{v}(t,x)=v(t,x) e^{-\delta t}$, hence from the previous equation we get
		\begin{equation*}
			\tilde{v}(t,x)= \int_{\Omega} \tilde{v}_T(y) \bar{\Gamma}(T,y) \di y + \int_t^T \int_{\Omega} f_m(\bar m) \mu e^{-\delta s}  \bar{\Gamma} + \xi \mu^2 e^{-\delta s}  \bar{\Gamma} + \frac{1}{2} |Dv|^2 e^{-\delta s} \bar{\Gamma} \, \di y \di s \geq
		\end{equation*}
		\begin{equation*}
			\geq \int_{\Omega} \tilde{v}_T(y) \bar{\Gamma}(T,y) \di y + \int_t^T \int_{\Omega} f_m(\bar m) \mu e^{-\delta s}  \bar{\Gamma} + \xi \mu^2 e^{-\delta s}  \bar{\Gamma} \di y \di s.
		\end{equation*}
		Changing all signs yields
		\begin{equation*}
			-\tilde{v}(t,x) \le - \int_{\Omega} \tilde{v}_T(y) \bar{\Gamma}(T,y) \di y + \int_t^T \int_{\Omega} - f_m(\bar m) \mu e^{-\delta s}  \bar{\Gamma} - \xi \mu^2 e^{-\delta s}  \bar{\Gamma} \, \di y \di s \le
		\end{equation*}
		\begin{equation}
			\le \|\tilde{v}_T(\cdot)\|_{L^{\infty}(\Omega)} + 2C_f \int_t^T \eps \left( e^{-\sigma_1 s} + e^{-\sigma_2(T-s)} \right) e^{-\delta s} \di s.
			\label{side2}
		\end{equation}
		Putting~\eqref{side1} and~\eqref{side2} together, we finally get
		\begin{equation} \label{bothsides}
			\|\tilde{v}(t,\cdot)\|_{L^{\infty}(\Omega)} \le \|\tilde{v}_T(\cdot)\|_{L^{\infty}(\Omega)} + 2C_f \int_t^T \eps \left( e^{-\sigma_1 s} + e^{-\sigma_2(T-s)} \right) e^{-\delta s} \di s,
		\end{equation}
		from which, by definition of $\tilde{v}$, we have
		\begin{equation*}
			\|v(t,\cdot)\|_{L^{\infty}(\Omega)} \le \|v_T(\cdot)\|_{L^{\infty}(\Omega)} e^{-\delta(T-t)}
			+2C_f \int_t^T \eps \left( e^{-\sigma_1 s} + e^{-\sigma_2(T-s)} \right) e^{-\delta (s-t)} \di s.
		\end{equation*}
		Since $s \ge t$, we have that $e^{-\delta(s-t)} \le 1$, hence 
		\begin{equation*}
			\|v(t,\cdot)\|_{L^{\infty}(\Omega)} \le \|v_T(\cdot)\|_{L^{\infty}(\Omega)} + 2 \eps \left( \frac{1}{\sigma_1} + \frac{1}{\sigma_2} \right) C_f,
		\end{equation*}
		which is the thesis.
	\end{proof}
	Now, let us define 
	\begin{equation}C_v := \| v_T(\cdot) \|_{L^{\infty}(\Omega)} +2 \eps \left( \frac{1}{\sigma_1} + \frac{1}{\sigma_2} \right) C_f.
		\label{Cv}
	\end{equation}
	It's clear that it tends to $0$ as both $\| v_T(\cdot) \|_{L^{\infty}(\Omega)}$ and $\eps$ approach $0$.
	Then, we have to get a "smallness" estimate also for $Dv$. Since the equation for $v$ is quadratic in the gradient variable, we can linearize it by using the so-called Cole-Hopf transform, i.e. by defining
	\begin{equation*}
		w(t,x):=e^{-\frac{v(t,x)}{2}}.
	\end{equation*}
	Now, we prove the following
	\begin{lem}	\label{lemDw}
		Suppose that $(v,\mu)$ is a solution to~\eqref{nlsys} and that the assumptions of Proposition~\ref{propfix1} are in force. Then,	the following estimate holds:
		\begin{equation*}
		\|Dv(t,\cdot)\|_{L^{\infty}(\Omega)} \le 4 e^{\frac{C_v}{2}} \| Dv_T(\cdot) \|_{L^{\infty}(\Omega)} + 2 e^{\frac{C_v}{2}} C' \left( \eps + \eps^2 + \delta C_v + C_v \right),
		\end{equation*}
		for all $t \in [0,T]$, $C_v$ is as in~\eqref{Cv} and $C'$ depends on $\bar u, C_v$.
	\end{lem}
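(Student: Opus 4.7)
The plan is to linearize the Hamilton--Jacobi equation for $v$ via the Cole--Hopf substitution $w := e^{-v/2}$. Using the identities $Dv = -2\,Dw/w$ and $\Delta v = -2\,\Delta w/w + |Dv|^2/2$, the quadratic term $|Dv|^2/2$ in the equation for $v$ cancels exactly upon substitution, and one finds after multiplication by $w/2$ that $w$ solves the (backward) linear parabolic equation
\begin{equation*}
-\partial_t w - \Delta w + \langle Dw, D\bar u\rangle = -\tfrac{w}{2}\bigl(f_m(\bar m)\mu + \xi\mu^2\bigr) + \tfrac{\delta w v}{2},
\qquad w(T,\cdot) = e^{-v_T/2}.
\end{equation*}
From Lemma~\ref{lemv}, $\|v(t,\cdot)\|_{L^\infty}\le C_v$, hence $e^{-C_v/2}\le w\le e^{C_v/2}$; in particular $|Dv| = 2|Dw|/w \le 2e^{C_v/2}|Dw|$ and $|Dw(T,\cdot)|\le \tfrac12 e^{C_v/2}|Dv_T|$. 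The statement thus reduces to a $T$-uniform $L^\infty$ bound on $Dw$.

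To get that bound, I would proceed by a duality argument that mirrors the one used in Lemma~\ref{lemv}. Let $\Gamma(\cdot;t,x)$ be the forward adjoint flow solving $\partial_s\Gamma - \Delta\Gamma - \text{div}(\Gamma D\bar u)=0$ on $(t,T)\times\Omega$ with $\Gamma(t,\cdot)=\delta_x$. Pairing $w$ with $\Gamma$ and integrating by parts yields the representation
\begin{equation*}
w(t,x) = \int_\Omega w(T,y)\Gamma(T,y)\,\di y + \int_t^T\!\!\int_\Omega \Bigl[-\tfrac{w}{2}\bigl(f_m(\bar m)\mu + \xi\mu^2\bigr) + \tfrac{\delta w v}{2}\Bigr](s,y)\,\Gamma(s,y)\,\di y\,\di s.
\end{equation*}
Applying $D_x$ and transferring the spatial derivative onto the factors $w$, $\mu$, $v$ via integration by parts in $y$ (using the smoothness of $\bar m$, $\bar u$ and the kernel estimates recalled in the Appendix), one arrives at an inequality of the shape
\begin{equation*}
\|Dw(t,\cdot)\|_{L^\infty} \le 2\,\|Dw(T,\cdot)\|_{L^\infty} + \tilde C\,\|w\|_\infty\bigl(\|\mu\|_\infty + \|\mu\|_\infty^2 + \delta\|v\|_\infty + \|v\|_\infty\bigr),
\end{equation*}
for a constant $\tilde C$ depending only on $\bar u$ and $C_v$. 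Substituting $\|w\|_\infty\le e^{C_v/2}$, $\|\mu\|_\infty\le\eps$ from~\eqref{epsmu} and $\|v\|_\infty\le C_v$ produces exactly the four contributions $\eps$, $\eps^2$, $\delta C_v$, $C_v$ appearing in the statement.

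Translating back via $\|Dv(t,\cdot)\|_{L^\infty}\le 2e^{C_v/2}\|Dw(t,\cdot)\|_{L^\infty}$ and $\|Dw(T,\cdot)\|_{L^\infty}\le \tfrac12 e^{C_v/2}\|Dv_T\|_{L^\infty}$ yields the desired bound, with the factor $4e^{C_v/2}$ absorbing $2 e^{C_v/2}\cdot 2\cdot \tfrac12 e^{C_v/2}$ (using $e^{C_v/2}\le 2$ in the perturbative regime where $C_v$ is small). The main obstacle is producing the $T$-uniform gradient estimate on $w$: the naive $L^1$ bound $\int_\Omega |D_x\Gamma(s,y;t,x)|\,\di y\lesssim (s-t)^{-1/2}$ is not integrable uniformly in $T$. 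This is circumvented by exploiting the smoothness and boundedness of $D\bar u$ (and of the other coefficients) to integrate by parts in $y$ in the source integral, thereby moving $D_x$ onto $w$, $\mu$, $v$ up to commutator terms dominated by $\int_\Omega\Gamma(s,\cdot)\,\di y\equiv 1$. This is precisely the situation handled by the linear divergence-form estimates in the Appendix, and it is the crux of the lemma.
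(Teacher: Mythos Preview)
Your Cole--Hopf setup and the final conversion back to $Dv$ are correct, and you have identified the right target inequality. The gap is in the middle step: the $T$-uniform gradient bound on $w$.

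Your proposed cure for the non-integrability of $\int_t^T (s-t)^{-1/2}\,\di s$ --- integrating by parts in $y$ to move $D_x$ onto the source --- does not work. After integration by parts the source integral carries $D_y\bigl(w(f_m\mu + \xi\mu^2 - \delta v)\bigr)$, which involves $D\mu$ (not controlled in $L^\infty$) and $Dw$ itself (circular). The Appendix lemma you invoke (Lemma~\ref{app2}) does \emph{not} proceed this way: it keeps $D\Gamma$ on the source and handles the long-time regime by a time cutoff, which produces an extra additive term $\|z\|_{L^\infty}$ in the bound. Applied directly to $z=w$, this term is of order $e^{C_v/2}\approx 1$ and is \emph{not} small, so the resulting estimate on $Dv$ would not be perturbative.

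The paper's missing ingredient is to apply Lemma~\ref{app2} not to $w$ but to $\tilde w := w-1$, which satisfies the same linear equation (same drift $D\bar u$, same source $g=-\tfrac{w}{2}(f_m\mu+\xi\mu^2-\delta v)$), has $D\tilde w=Dw$, and crucially obeys $\|\tilde w\|_{L^\infty}\le C_v$ because $e^{\pm C_v/2}-1=O(C_v)$. The cutoff term then contributes $C_v$, which is exactly the fourth summand in the parenthesis $(\eps+\eps^2+\delta C_v+C_v)$; note that this $C_v$ does \emph{not} come from the source $g$ (which only produces $\eps$, $\eps^2$, $\delta C_v$), so your inequality with the bare $\|v\|_\infty$ term cannot be obtained from your stated representation and integration-by-parts scheme.
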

	\begin{proof}
		From the definition of $w$ we derive that $v=-2 \text{ log}(w)$, hence by the chain rule,
		\begin{equation*}
		\begin{aligned}
		& \partial_t v = -\frac{2}{w} \partial_t w, \\
		& \partial_{x_i} v = - \frac{2}{w} \partial_{x_i} w, \\
		& \partial^2_{x_i x_j} v= -\frac{2}{w}\partial^2_{x_i x_j} w + \frac{2}{w^2} \partial_{x_i} w \cdot \partial_{x_j} w, \\
		&\Delta v = - \frac{2}{w} \Delta w + \frac{2}{w^2} |Dw|^2 \\
		&Dv= - \frac{2}{w} Dw.
		\end{aligned}
		\end{equation*}
		After substituting the previous expressions into the equation satisfied by $v$, we get that $w$ satisfies the following Cauchy problem
		\begin{equation*}
			\begin{cases}
				-\partial_t w - \Delta w + \langle Dw,D\bar{u} \rangle + \frac{w}{2} \left( f_m(\bar m) \mu + \xi \mu^2 - \delta v \right)= 0,& \text{ on } (0,T) \times \Omega \\
				w(T,\cdot)=e^{-\frac{v_T(\cdot)}{2}},& \text{ on } \Omega \\
			\end{cases}
			\label{eqforw}
		\end{equation*}
		We have shown in Lemma~\ref{lemv} that $\|v(t,\cdot)\|_{L^{\infty}(\Omega)} \le C_{v}$ for all $t$, with $C_v$ independent of $T$. 
		We now turn our attention to the function $\tilde w := w - 1$, that we expect to be close to zero; indeed, since by Lemma~\ref{lemv} we can choose $C_v$ as small as we need (in particular smaller than $1$), we can derive the following estimates on $\tilde w$.
		$$e^{-\frac{C_v}{2}} \le w(t,x) \le e^{\frac{C_v}{2}}, \qquad \forall (t,x) \in [0,T] \times \Omega,$$
		hence
		$$e^{-\frac{C_v}{2}} - 1 \le w(t,x) - 1 \le e^{\frac{C_v}{2}} - 1, \qquad \forall (t,x) \in [0,T] \times \Omega.$$
		It's true that
		$$e^{\frac{C_v}{2}} - 1 \le C_v \quad \text{ and } \quad e^{-\frac{C_v}{2}} - 1 \ge -\frac{C_v}{2} \ge - C_v,$$
		provided that $C_v \le 1$. We conclude that
		$$-C_v \le w(t,x) - 1 \le C_v \Rightarrow \| \tilde w (t,\cdot) \|_{L^{\infty}(\Omega)} = \|w(t,\cdot) - 1\|_{L^{\infty}(\Omega)} \le C_v \qquad \forall t \in [0,T].$$
		Such function $\tilde w$ satisfies the following equation:
		\begin{equation*}
			\begin{cases}
				-\partial_t \tilde{w} - \Delta \tilde{w} + \langle D\tilde{w},D\bar{u} \rangle + \frac{w}{2} \left( f_m(\bar m) \mu + \xi \mu^2 - \delta v \right) = 0,& \text{ on } (0,T) \times \Omega \\
				\tilde{w}(T,\cdot)=e^{-\frac{v_T(\cdot)}{2}} - 1,& \text{ on } \Omega. \\
			\end{cases}
			\label{eqfortildew}
		\end{equation*}
		We can apply Lemma~\ref{app2} with $H=-D\bar{u}$ and $g=-\frac{w}{2} \left( f_m(\bar m) \mu + \xi \mu^2 - \delta v \right)$ and we get the following estimate:
		\begin{equation}
			\begin{split}
			\underset{t \in [0,T]}{\sup} \|D\tilde{w}(t,\cdot)\|_{L^{\infty}(\Omega)} &\le 2 \| D \tilde w_T(\cdot) \|_{L^{\infty}(\Omega)} \\
			&+ C \left[ \| w \|_{L^{\infty}([0,T] \times \Omega)} \left( \| f_m \|_{L^{\infty}(\Omega)} \| \mu \|_{L^{\infty}([0,T] \times \Omega)} + \| \xi \|_{L^{\infty}(\Omega)} \| \mu \|^2_{L^{\infty}([0,T] \times \Omega)} \right)\right] \\
			&+C \left( \delta \| w \|_{L^{\infty}([0,T] \times \Omega)} \| v \|_{L^{\infty}([0,T] \times \Omega)}+ \| \tilde w \|_{L^{\infty}([0,T] \times \Omega)} \right) \le \\
			&\le 2 \| D \tilde w_T(\cdot) \|_{L^{\infty}(\Omega)} + C' \left( \eps + \eps^2 + \delta C_v + C_v \right),
			\end{split}
			\label{estapp2}
		\end{equation}
		for a possibly different constant $C'$, which depends also on $C_f$.\\
		Recall that $D\tilde{w}=Dw$, and that $Dv=-2 e^{\frac{v}{2}} Dw$. Estimate~\eqref{estapp2} can be rewritten as follows:
		\begin{equation*}
			\underset{t \in [0,T]}{\sup} \|Dv(t,\cdot)\|_{L^{\infty}(\Omega)} \le 4 e^{\frac{C_v}{2}} \| Dv_T(\cdot) \|_{L^{\infty}(\Omega)} + 2 e^{\frac{C_v}{2}} C'_H \left( \eps + \eps^2 + \delta C_v + C_v \right),
		\end{equation*}
		which is the thesis.
	\end{proof}
	We are ready to show Proposition~\ref{propfix1}.
	\begin{proof}[Proof of Proposition~\ref{propfix1}]
		Gathering all the estimates from Lemmas~\ref{lemv} and~\ref{lemDw} and assumption~\eqref{epsmu} and using hypothesis~\eqref{cond1}, we have that
		\begin{equation*}
			\| \mu(t,\cdot)\|_{L^{\infty}(\Omega)} \le \eps \left( e^{-\sigma_1 t} + e^{-\sigma_2(T-t)} \right) \le 2 \eps,
		\end{equation*}
		\begin{equation*}
			\begin{split}
			\|v(t,\cdot)\|_{L^{\infty}(\Omega)} &\le C_v = \| v_T(\cdot) \|_{L^{\infty}(\Omega)} + 2 \eps \left( \frac{1}{\sigma_1} + \frac{1}{\sigma_2} \right)C_f \le \\
			&\le \gamma +  2 \eps \left( \frac{1}{\sigma_1} + \frac{1}{\sigma_2} \right) C_f,
			\end{split}
		\end{equation*}
		and
		\begin{equation*}
			\begin{split}
			\|Dv(t,\cdot)\|_{L^{\infty}(\Omega)} &\le 4 e^{\frac{C_v}{2}} \| Dv_T(\cdot) \|_{L^{\infty}(\Omega)} + 2 e^{\frac{C_v}{2}} C'_H \left( \eps + \eps^2 + \delta C_v + C_v \right) \le \\ &\le 4 e^{\frac{C_v}{2}} \gamma + 2 e^{\frac{C_v}{2}} C'_H \left( \eps + \eps^2 + \delta C_v + C_v \right),
		\end{split}
		\end{equation*}
	for all $t \in [0,T]$.
	Since we can choose $\eps$, $\gamma$ as small as possible, we can establish that there exist $\bar \eps, \bar \gamma$, depending on $\Theta$, which depends only on $\eta,f,\bar m$, small in such a way that \eqref{muDvestimates} holds.
	\end{proof}
	We now state the core of the fixed point argument. First of all, we have to derive some useful estimates for $\Phi(0)$ and $\Phi(T)$, which are valid under the result of Proposition~\ref{propfix1}.
	\begin{lem}
		Under the assumptions of Proposition~\ref{propfix1}, one has the following estimate
		\begin{equation}
			|\Phi(0)| + |\Phi(T)| \le \left\| \frac{\mu_0}{\sqrt{\bar{m}}} \right\|_{L^{2}(\Omega)} + \left\| \frac{\mu_0}{\sqrt{\bar{m}}} \right\|^2_{L^{2}(\Omega)} + \frac{C_P}{2} \left\| \sqrt{\bar{m}} |Dv|(T,\cdot)\right\|^2_{L^2(\Omega)},
			\label{phi0T}
		\end{equation}
		where $C_P$ is the Poincar\'e-Wirtinger constant of $\bar m$.
		\label{lemphi}
	\end{lem}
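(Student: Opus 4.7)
The plan is to bound $|\Phi(0)|$ and $|\Phi(T)|$ independently by splitting
\[
\Phi(t) = \int_\Omega \mu(t)\,v(t) \,\di x + \int_\Omega \frac{\mu(t)^2}{\bar m} \,\di x
\]
and estimating each of the two pieces via Cauchy--Schwarz, the weighted Poincar\'e inequality built into \Sp, and the $L^\infty$ controls furnished by Lemma~\ref{lemv} together with the standing hypothesis \eqref{epsmu}. The purely quadratic piece $\int \mu^2/\bar m$ is already in the form $\|\mu/\sqrt{\bar m}\|^2_{L^2}$, so all the effort goes into the cross term $\int \mu\,v$.

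For the cross term, mass conservation guarantees $\int_\Omega \mu(t)\,\di x = 0$ for every $t$, so one may subtract from $v$ its $\bar m$-weighted average $\bar v := \int_\Omega \bar m\,v\,\di x$ (using $\int \bar m = 1$) without altering the integral. Cauchy--Schwarz in $L^2(\bar m\,\di x)$ and the Poincar\'e inequality for $\bar m$ then yield
\[
\left|\int_\Omega \mu\,v\,\di x\right| = \left|\int_\Omega \mu\,(v - \bar v)\,\di x\right| \le \left\|\frac{\mu}{\sqrt{\bar m}}\right\|_{L^2}\|\sqrt{\bar m}\,(v-\bar v)\|_{L^2} \le \sqrt{C_P}\left\|\frac{\mu}{\sqrt{\bar m}}\right\|_{L^2}\|\sqrt{\bar m}\,|Dv|\|_{L^2},
\]
and Young's inequality splits this into $\tfrac12\|\mu/\sqrt{\bar m}\|^2_{L^2} + \tfrac{C_P}{2}\|\sqrt{\bar m}\,|Dv|\|^2_{L^2}$. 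Evaluating at $t = T$, where $Dv(T,\cdot)$ is data, supplies directly the $\tfrac{C_P}{2}\|\sqrt{\bar m}\,|Dv(T,\cdot)|\|^2_{L^2}$ contribution in \eqref{phi0T}, leaving only a residual proportional to $\|\mu(T,\cdot)/\sqrt{\bar m}\|^2_{L^2}$.

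At $t = 0$, the gradient $|Dv(0,\cdot)|$ does not appear on the right of \eqref{phi0T}, so instead of the Poincar\'e step I would invoke a cruder $L^1$--$L^\infty$ estimate,
\[
\left|\int_\Omega \mu_0\,v(0,\cdot)\,\di x\right| \le \|\mu_0\|_{L^1}\,\|v(0,\cdot)\|_{L^\infty} \le C_v\left\|\frac{\mu_0}{\sqrt{\bar m}}\right\|_{L^2},
\]
using $\|\mu_0\|_{L^1} \le \|\mu_0/\sqrt{\bar m}\|_{L^2}\,(\int_\Omega \bar m)^{1/2} = \|\mu_0/\sqrt{\bar m}\|_{L^2}$ by Cauchy--Schwarz with $\int \bar m = 1$, and the uniform bound $\|v(0,\cdot)\|_{L^\infty} \le C_v$ supplied by Lemma~\ref{lemv}. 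Under the standing smallness of $\eps$ and $\lambda$ in Proposition~\ref{propfix1}, $C_v \le 1$, so this produces the non-squared $\|\mu_0/\sqrt{\bar m}\|_{L^2}$ contribution; coupled with the quadratic piece $\|\mu_0/\sqrt{\bar m}\|^2_{L^2}$ coming from $\int \mu_0^2/\bar m$, this accounts for the first two terms on the right of \eqref{phi0T}.

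The main obstacle will be disposing of the residual $\|\mu(T,\cdot)/\sqrt{\bar m}\|^2_{L^2}$ accumulated in the bound for $|\Phi(T)|$, since it is not among the terms in the claimed inequality. The natural remedy is to invoke the $L^\infty$ hypothesis \eqref{epsmu} at $t = T$, which gives $\|\mu(T,\cdot)\|_{L^\infty} \le 2\eps$, together with the positive lower bound $\bar m \ge c > 0$ (valid by the strong maximum principle on $\T^n$ and by the standing assumptions on $\R^n$), to estimate $\|\mu(T,\cdot)/\sqrt{\bar m}\|^2_{L^2} \lesssim \eps^2 \int_\Omega \bar m^{-1}\,\di x$. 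This is arbitrarily small in the regime of Proposition~\ref{propfix1} and can therefore be absorbed into the other terms, completing the proof of \eqref{phi0T}.
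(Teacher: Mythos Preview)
Your treatment of $|\Phi(0)|$ is essentially the paper's: Cauchy--Schwarz in $L^2(\bar m\,\di x)$ together with $\|v(0,\cdot)\|_{L^\infty}\le 1$ gives the first two terms on the right of \eqref{phi0T}. The problem is your handling of $|\Phi(T)|$.

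The ``residual'' $\|\mu(T,\cdot)/\sqrt{\bar m}\|_{L^2}^2$ cannot be absorbed the way you suggest. On $\Omega=\R^n$ your bound $\|\mu(T)/\sqrt{\bar m}\|_{L^2}^2 \le \|\mu(T)\|_{L^\infty}^2\int_\Omega \bar m^{-1}\,\di x$ is typically infinite, since $\bar m$ decays at infinity. Even on $\T^n$, where $\int \bar m^{-1}<\infty$, you obtain a fixed extra term of order $\bar\eps^{\,2}$ on the right of \eqref{phi0T}; this cannot be ``absorbed into the other terms'', because those terms are pure data quantities that are driven to zero as $\lambda\to 0$ (this is exactly what is needed in Lemma~\ref{lambdasmall} and in the proof of Proposition~\ref{propfix2}, where it is used that $\lambda_i\to 0$ as $A,B\to 0$). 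An additive constant independent of the data would break that conclusion.

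The paper avoids the residual altogether by not bounding $|\Phi(T)|$ directly. Instead it treats upper and lower bounds separately and crosses them using \eqref{phidelta}. For the lower bound on $\Phi(T)$ one writes $\Phi(T)=\int \mu(T)\hat v(T)+\int \mu(T)^2/\bar m$, applies Young so that the negative contribution $-\tfrac12\int \mu(T)^2/\bar m$ is exactly cancelled by half of the positive quadratic piece, and then Poincar\'e gives $\Phi(T)\ge -\tfrac{C_P}{2}\|\sqrt{\bar m}\,|Dv(T)|\|_{L^2}^2$ with no $\mu(T)$-remainder. The upper bound on $\Phi(T)$ is obtained from the decay inequality $\Phi(T)\le \Phi(0)\,e^{-(\sigma-\delta)T}\le \|\mu_0/\sqrt{\bar m}\|_{L^2}+\|\mu_0/\sqrt{\bar m}\|_{L^2}^2$, and the lower bound on $\Phi(0)$ comes likewise from $\Phi(0)\ge \Phi(T)\,e^{-(\sigma+\delta)T}$. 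The two ideas you are missing are thus the sign exploitation in the Young step and the use of \eqref{phidelta} to transfer the one-sided bounds.
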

	\begin{proof}
		Before starting, we notice that, by Proposition~\ref{propfix1}, we have that in particular
		\begin{equation}
			\|v(t,\cdot)\|_{L^{\infty}(\Omega)} < \Theta \le 1.
			\label{small}
		\end{equation}
		We start from $\Phi(0)$, by applying Holder's inequality:
		$$\Phi(0) = \int_{\Omega} \mu_0 \cdot v(0) + \frac{\mu_0^2}{\bar{m}} \di x \le \left( \int_{\Omega} \frac{\mu_0^2}{\bar{m}} \, \di x \right)^{\frac{1}{2}} \cdot \left( \int_{\Omega} \bar{m}\underbrace{v^2(0)}_{\le 1} \di x \right)^{\frac{1}{2}} + \int_{\Omega} \frac{\mu_0^2}{\bar{m}}\,  \di x \le \left( \int_{\Omega} \frac{\mu_0^2}{\bar{m}} \, \di x \right)^{\frac{1}{2}} + \int_{\Omega} \frac{\mu_0^2}{\bar{m}} \, \di x,$$
		where we used~\eqref{small}.
		Then, we observe that, if we define $\hat v:=v - \int_{\Omega} v$, since $\int_{\Omega} \mu = 0$, we have the following
		\begin{multline*}
			\Phi(T)=\int_{\Omega} \mu(T,x) v(T,x) + \frac{\mu^2(T,x)}{\bar{m}} \di x = \\ = \int_{\Omega} \mu(T,x) \hat v(T,x) \di x + \int_{\Omega} \mu(T,x)\left(\int_{\Omega} v(T,y) \di y \right)\di x + \int_{\Omega} \frac{\mu^2(T,x)}{\bar{m}} \di x  = \\ = \int_{\Omega} \mu(T,x) \hat v(T,x) +  \frac{\mu^2(T,x)}{\bar{m}} \di x \geq \int_{\Omega} - \frac{\mu^2(T,x)}{2\bar{m}} - \frac{\bar{m}\hat v^2(T,x)}{2} + \frac{\mu^2(T,x)}{\bar{m}} \di x \geq \\ \geq \int_{\Omega} \frac{\mu^2(T,x)}{2\bar{m}} - \frac{C_P}{2} \bar{m} |D \hat v|^2(T,x) \di x \geq -\int_{\Omega} \frac{C_P}{2} \bar{m} |Dv|^2(T,x) \di x,
	\end{multline*}
		where we used Young's inequality and we applied the weighted Poincaré inequality.
		To get the reverse inequalities, we use~\eqref{phidelta} and we obtain
		\begin{equation*}
			\Phi(0) \ge \Phi(T) e^{-(\sigma+\delta)(T-t)} \ge - e^{-(\sigma+\delta)(T-t)} \int_{\Omega} \frac{C_P}{2} \bar m |Dv|^2(T,x) \di x \ge -\int_{\Omega} \frac{C_P}{2} \bar m |Dv|^2(T,x) \di x,
		\end{equation*}
		and
		\begin{equation*}
			\Phi(T) \le \Phi(0) e^{-(\sigma-\delta)t} \le \left( \int_{\Omega} \frac{\mu_0^2}{\bar{m}} \, \di x \right)^{\frac{1}{2}} + \int_{\Omega} \frac{\mu_0^2}{\bar{m}} \, \di x.
		\end{equation*}
		Putting all the previous estimates together, we get~\eqref{phi0T}.
	\end{proof}
	\begin{rmk}
		Observe that the second estimate is true also without the assumptions of Proposition~\ref{propfix1}.
	\end{rmk}
	From Lemma~\ref{lemphi}, together with the results in Propositions~\ref{L2mu} and~\ref{L2Dv}, we deduce the following estimates for $\lambda_i$, with $i=1,...,4$.
	\begin{lem}	\label{lambdasmall}
		Under the assumptions of Proposition~\ref{propfix1}, the following (weighted) $L^2$ exponential-type turnpike properties hold:
		\begin{equation*}
			\left \| \frac{\mu(t,\cdot)}{\sqrt{\bar{m}(\cdot)}}\right \|^2_{L^2(\Omega)} \le \lambda_1 e^{-(\sigma-\delta) t} + \lambda_2 e^{-(\sigma+\delta) (T-t)}, \qquad \forall t \in [0,T],
		\end{equation*}
		\begin{equation*}
			\left \| \sqrt{\bar{m}} |Dv|(t,\cdot) \right \|^2_{L^2(\Omega)} \le \lambda_3 e^{-(\sigma-\delta) t} + \lambda_4 e^{-(\sigma+\delta) (T-t)}, \qquad \forall t \in [0,T],
		\end{equation*}
		where $\sigma$ is as in Proposition~\ref{phinl1},
		\begin{align*}
			&\lambda_1 \le C \left( A + 2 A^2 + \frac{C_P}{2} B^2 \right),  &  &\lambda_3 \le C \left( A + A^2 + \frac{C_P}{2} B^2 \right),         \\
			&\lambda_2 \le C \left( A + A^2 + \frac{C_P}{2} B^2 \right), &  &\lambda_4 \le C \left( A + A^2 + \frac{C_P}{2} B^2 + B^2 \right),
		\end{align*}
		with $A:= \left\| \frac{\mu_0}{\sqrt{\bar{m}}} \right\|_{L^2(\Omega)}$ and $B:=\left\| \sqrt{\bar{m}}|Dv|(T,\cdot) \right\|_{L^2(\Omega)}$ and $C$ depends on $\bar{m},\bar{u},f$.
	\end{lem}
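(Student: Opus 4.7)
The plan is to simply combine the three ingredients already established: the expressions for $\lambda_1,\lambda_2,\lambda_3,\lambda_4$ given in Propositions~\ref{L2mu} and~\ref{L2Dv}, and the bound on $|\Phi(0)|+|\Phi(T)|$ from Lemma~\ref{lemphi}. There is no new analytic content; the only work is bookkeeping.

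First, I recall from Proposition~\ref{L2mu} that
\[
\lambda_1 = C\bigl(|\Phi(0)| + A^2\bigr), \qquad \lambda_2 = C\,|\Phi(T)|,
\]
and from Proposition~\ref{L2Dv} that
\[
\lambda_3 = C\,|\Phi(0)|, \qquad \lambda_4 = C\,|\Phi(T)| + B^2,
\]
where $A=\|\mu_0/\sqrt{\bar m}\|_{L^2}$ and $B=\|\sqrt{\bar m}|Dv|(T,\cdot)\|_{L^2}$. Since the assumptions of Proposition~\ref{propfix1} are in force, both Lemma~\ref{lemphi} and the estimates above are applicable.

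Next, Lemma~\ref{lemphi} yields
\[
|\Phi(0)| + |\Phi(T)| \le A + A^2 + \tfrac{C_P}{2}\,B^2,
\]
so in particular each of $|\Phi(0)|$ and $|\Phi(T)|$ is bounded by $A+A^2+\tfrac{C_P}{2}B^2$. Substituting these bounds into the four expressions above gives, up to harmless redefinitions of the constant $C=C(\bar m,\bar u,f)$,
\begin{align*}
\lambda_1 &\le C\bigl(A + A^2 + \tfrac{C_P}{2}B^2 + A^2\bigr) = C\bigl(A + 2A^2 + \tfrac{C_P}{2}B^2\bigr),\\
\lambda_2 &\le C\bigl(A + A^2 + \tfrac{C_P}{2}B^2\bigr),\\
\lambda_3 &\le C\bigl(A + A^2 + \tfrac{C_P}{2}B^2\bigr),\\
\lambda_4 &\le C\bigl(A + A^2 + \tfrac{C_P}{2}B^2\bigr) + B^2 \le C'\bigl(A + A^2 + \tfrac{C_P}{2}B^2 + B^2\bigr),
\end{align*}
which matches the asserted bounds.

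There is no genuine obstacle here: the only point requiring a small amount of care is verifying that the hypotheses of Proposition~\ref{propfix1} do propagate to both Lemma~\ref{lemphi} and the two $L^2$-decay statements (so that the constants $\lambda_i$ produced by Propositions~\ref{L2mu} and~\ref{L2Dv} are well defined, and the smallness assumption $\|v\|_{L^\infty}\le\Theta\le 1$ used implicitly in Lemma~\ref{lemphi} holds). Once this is observed, the conclusion is a direct algebraic substitution. The exponential decay factors $e^{-(\sigma-\delta)t}$ and $e^{-(\sigma+\delta)(T-t)}$ are inherited unchanged from Propositions~\ref{L2mu} and~\ref{L2Dv}.
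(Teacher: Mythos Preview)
Your proposal is correct and follows exactly the same approach as the paper: invoke Proposition~\ref{propfix1} to ensure the smallness condition $\|\mu\|_{L^\infty},\|Dv\|_{L^\infty}<\Theta$, so that Propositions~\ref{L2mu}, \ref{L2Dv} and Lemma~\ref{lemphi} all apply, and then substitute the bound on $|\Phi(0)|+|\Phi(T)|$ into the expressions for the $\lambda_i$. The paper's own proof is a one-sentence version of the same bookkeeping you carry out explicitly.
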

	\begin{proof}
		The estimates hold true since both $\|\mu(t,\cdot)\|_{L^{\infty}(\Omega)}$ and $\|Dv(t,\cdot)\|_{L^{\infty}(\Omega)}$ are smaller than $\Theta$ as a consequence of Proposition~\ref{propfix1}, hence we can apply Propositions~\ref{L2mu} and~\ref{L2Dv} together with Lemma~\ref{lemphi} to get the thesis.
	\end{proof}
	\begin{rmk}
		Observe that if $A$ and $B$ are small, then also $\lambda_i$ are small: when $A+B \rightarrow 0$, then $\lambda_i \rightarrow 0$, for all $i=1,...,4$. This is a key point to prove the following
		\label{rmk2}
	\end{rmk}
	\begin{prop} \label{propfix2}
		Suppose $(\nu,\mu)$ is a solution to~\eqref{nlsys}. Assume that
		\begin{equation}
			\|\mu(t,\cdot)\|_{L^{\infty}(\Omega)} \le \bar{\eps} \left( e^{-\sigma_1 t} + e^{-\sigma_2(T-t)} \right),
			\label{cond3}
		\end{equation}
		for all $t \in [0,T]$, where $\bar{\eps}$ comes from Proposition~\ref{propfix1}. Then, there exists $\tilde{\gamma} \le \bar{\gamma}$ such that if
			\begin{align}
			&\|\mu_0(\cdot)\|_{L^{\infty}(\Omega)} + \|v_T(\cdot)\|_{W^{1,\infty}(\Omega)} < \tilde{\gamma}, 
			\label{cond4} \\
			&\left\| \frac{\mu_0}{\sqrt{\bar{m}}} \right\|_{L^{2}(\Omega)} + \|\sqrt{\bar{m}} |Dv_T|(\cdot)\|_{L^2(\Omega)} < \tilde{\gamma},
			\label{cond5}
			\end{align}
		then
		\begin{equation}
			\|\mu(t,\cdot)\|_{L^{\infty}(\Omega)} \le \frac{\bar{\eps}}{2} \left( e^{-\sigma_1 t} + e^{-\sigma_2(T-t)} \right),
			\label{estmu}
		\end{equation}
		for all $t \in [0,T]$.
	\end{prop}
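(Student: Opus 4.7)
The strategy is to chain Proposition~\ref{propfix1}, Lemma~\ref{lambdasmall}, and a parabolic smoothing step that converts weighted $L^2$ exponential decay into $L^\infty$ exponential decay, while carefully tracking how the final prefactors depend on the smallness parameter $\tilde\lambda$.

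First, hypotheses~\eqref{cond3} and~\eqref{cond4} (for any $\tilde\lambda\leq\bar\lambda$) place the pair $(v,\mu)$ in the regime of Proposition~\ref{propfix1}, so that $\|\mu(t,\cdot)\|_\infty$ and $\|v(t,\cdot)\|_{W^{1,\infty}}$ remain below $\Theta$ uniformly in $t\in[0,T]$. This is exactly the smallness condition required to invoke Lemma~\ref{lambdasmall}, which delivers the weighted $L^2$ exponential decays of $\mu/\sqrt{\bar m}$ and $\sqrt{\bar m}|Dv|$ with constants $\lambda_1,\dots,\lambda_4$. By Remark~\ref{rmk2}, these constants are made arbitrarily small by reducing $A=\|\mu_0/\sqrt{\bar m}\|_{L^2}$ and $B=\|\sqrt{\bar m}|Dv_T|\|_{L^2}$, which is precisely what~\eqref{cond5} provides.

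The core step is to upgrade the weighted $L^2$ bound
\[
\|\mu(t)/\sqrt{\bar m}\|_{L^2}^2 \leq \lambda_1 e^{-(\sigma-\delta)t}+\lambda_2 e^{-(\sigma+\delta)(T-t)}
\]
to an $L^\infty$ bound with rates $\sigma_{1,2}=(\sigma\mp\delta)/(n+1)$. Rewrite the $\mu$-equation in linear divergence form as
\[
\partial_t\mu-\Delta\mu-\text{div}\bigl(\mu(D\bar u+Dv)\bigr)=\text{div}(\bar m\,Dv),
\]
where the drift is uniformly bounded (by regularity of $\bar u$ and Proposition~\ref{propfix1}) and the divergence-form source inherits an $L^2$ exponential decay controlled by $\lambda_3,\lambda_4$. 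A Duhamel representation against the associated fundamental solution, combined with the parabolic smoothing/gradient estimates of the Appendix, bounds $\|\mu(t,\cdot)\|_\infty$ by a convolution combining (i) the smoothing of the $L^2$-decaying source and of $\mu$ itself across a time window $[s,t]$, and (ii) the forward propagation of the $L^\infty$ initial datum for $t$ near $0$ (and, by a symmetric backward representation, of the $W^{1,\infty}$ terminal datum for $t$ near $T$). The factor $n+1$ in the rates $\sigma_{1,2}$ emerges from a Gagliardo--Nirenberg-type interpolation passing from $L^2$ to $L^\infty$ in spatial dimension $n$, with one additional unit accounting for the divergence-form source.

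The main obstacle is preserving the exponential rate through this bootstrap: a naive heat-kernel convolution gives only polynomial-in-time decay. The remedy is an optimal time splitting — long exponential-decay segment plus short parabolic-smoothing segment — or, equivalently, an iterated $L^2\to L^{p_1}\to\cdots\to L^\infty$ chain through Sobolev embeddings, which exchanges a fraction $1-1/(n+1)$ of the raw rate $\sigma\mp\delta$ for regularity. Once this upgrade is in place, one chooses $\tilde\lambda$ so small that each prefactor multiplying the exponentials — stemming from $\lambda_1,\dots,\lambda_4$, $\|\mu_0\|_\infty$, and $\|v_T\|_{W^{1,\infty}}$ — is at most $\bar\eps/4$, which yields~\eqref{estmu}.
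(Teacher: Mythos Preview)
Your overall plan is correct and matches the paper's proof: invoke Proposition~\ref{propfix1} from \eqref{cond3}--\eqref{cond4} to get the $\Theta$-bounds, apply Lemma~\ref{lambdasmall} to obtain the weighted-$L^2$ decays with small $\lambda_1,\dots,\lambda_4$, then write the $\mu$-equation as $\partial_t\mu-\Delta\mu-\text{div}(\mu(D\bar u+Dv))=\text{div}(\bar m Dv)$ and upgrade via the Appendix estimate (Lemma~\ref{app1}) plus interpolation. The paper's concrete mechanism is the pointwise interpolation
\[
\|\bar m Dv\|_{L^{n+1}}\le \|\bar m\|_\infty^{\frac{n}{n+1}}\|Dv\|_\infty^{\frac{n-1}{n+1}}\Bigl(\int\bar m|Dv|^2\Bigr)^{\frac{1}{n+1}},
\quad
\|\mu\|_{L^{n}}\le \|\mu\|_\infty^{\frac{n-2}{n}}\|\bar m\|_\infty^{\frac{1}{n}}\Bigl(\int\frac{\mu^2}{\bar m}\Bigr)^{\frac{1}{n}},
\]
applied on the fixed window $(t-\tau,t)$ furnished by Lemma~\ref{app1}; the power $1/(n+1)$ on the squared weighted-$L^2$ norm is exactly what turns $e^{-(\sigma\mp\delta)s}$ into $e^{-\sigma_{1,2}s}$. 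No ``optimal time splitting'' or iterated Sobolev chain is needed.

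One point should be corrected: there is no ``symmetric backward representation'' for $\mu$ near $t=T$. The Fokker--Planck equation for $\mu$ is a forward parabolic equation with initial datum $\mu_0$ only; it cannot be represented backward from a terminal datum. In the paper, the decay as $t\to T$ comes entirely from the $e^{-(\sigma+\delta)(T-t)}$ terms already present in the weighted-$L^2$ estimates of Lemma~\ref{lambdasmall} (constants $\lambda_2,\lambda_4$), fed through the same forward Duhamel bound of Lemma~\ref{app1}. The terminal datum $v_T$ influences $\mu$ only indirectly, via $\lambda_4$ and the source $\bar m Dv$. Once this is understood, your closing step is exactly right: since all prefactors are controlled by $\|\mu_0\|_\infty$ and the $\lambda_i$, and the latter tend to zero with $A+B$ by Remark~\ref{rmk2}, one chooses $\tilde\lambda\le\bar\lambda$ small enough to force the total below $\tfrac{\bar\eps}{2}(e^{-\sigma_1 t}+e^{-\sigma_2(T-t)})$.
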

		\begin{proof}
			Assume first that $\tilde{\gamma}=\bar{\gamma}$. During the proof, this constant might be further reduced, but this will not change the applicability of the result in Proposition~\ref{propfix1}, since the latter holds for all $\gamma \le \bar{\gamma}$. In particular, from this observation and assumptions~\eqref{cond3} and~\eqref{cond4}, we have that
			$$\|\mu(t,\cdot)\|_{L^{\infty}(\Omega)} < \Theta \qquad \|Dv(t,\cdot)\|_{L^{\infty}(\Omega)} < \Theta,$$
			hence all the $L^2$ weighted estimates in Lemma~\ref{lambdasmall} hold and can be used. Moreover, observe that since $\Theta < 1$, the same inequality holds for $\|\mu(t,\cdot)\|_{L^{\infty}(\Omega)}$ and $\|Dv(t,\cdot)\|_{L^{\infty}(\Omega)}$. \\
			Now, recall that $\mu$ satisfies the second equation in~\eqref{nlsys}. We can apply Lemma~\ref{app1} with $F=D\bar{u}+Dv$ and $G=\bar{m}Dv$. \\
			Therefore,~\eqref{estapp1} implies that
			\begin{equation}
				\underset{t \in [0,T]}{\sup} \|\mu(t,\cdot)\|_{L^{\infty}(\Omega)} \le 2 \| \mu_0(\cdot) \|_{L^{\infty}(\Omega)} + C \left( \| \bar m Dv \|_{L^{\infty}((t-\tau,t); L^{n+1}(\Omega))} + \| \mu \|_{L^{\infty}((t-\tau,t);L^n(\Omega))} \right),
				\label{estmufin1}
			\end{equation}
			where $\tau=\frac{1}{16 (\|D\bar{u}\|_{L^{\infty}(\Omega)}+1)^2 C_2(n)^2}$, the constant $C$ depends only on $\| D \bar u \|_{L^{\infty}(\Omega)}, \tau, n$ and $C_2(n):=C_2(n,1)$ comes from Lemma~\ref{lemfix1} below. Let us estimate the terms in brackets. \\
			First of all, we have that
			\begin{equation} \label{Dvinterpol}
				\begin{split}
				\| \bar m Dv \|_{L^{\infty}((t-\tau,t);L^{n+1}(\Omega))} &= \underset{s \in (t-\tau,t)}{\sup} \left( \int_{\Omega} |\bar m|^{n+1} |Dv|^{n+1}(s,y) \di y \right)^{\frac{1}{n+1}} \\ 
				&\le \| \bar m \|_{L^{\infty}(\Omega)}^{\frac{n}{n+1}} \| Dv \|_{L^{\infty}([0,T] \times \Omega)}^{\frac{n-1}{n+1}} \underset{s \in (t-\tau,t)}{\sup} \left( \int_{\Omega} \bar m |Dv|^2(s,y) \di y \right)^{\frac{1}{n+1}} \\
				&\le \| \bar m \|_{L^{\infty}(\Omega)}^{\frac{n}{n+1}} \| Dv \|_{L^{\infty}([0,T] \times \Omega)}^{\frac{n-1}{n+1}} \underset{s \in (t-\tau,t)}{\sup} \left( \lambda_3 e^{-(\sigma-\delta)s} + \lambda_4 e^{-(\sigma+\delta)(T-s)} \right)^{\frac{1}{n+1}} \\
				&\le \| \bar m \|_{L^{\infty}(\Omega)}^{\frac{n}{n+1}} \| Dv \|_{L^{\infty}([0,T] \times \Omega)}^{\frac{n-1}{n+1}} \underset{s \in (t-\tau,t)}{\sup} \left( \lambda_3^{\frac{1}{n+1}} e^{-\sigma_1 s} + \lambda_4^{\frac{1}{n+1}} e^{-\sigma_2(T-s)} \right) \\
				&\le \| \bar m \|_{L^{\infty}(\Omega)}^{\frac{n}{n+1}} \| Dv \|_{L^{\infty}([0,T] \times \Omega)}^{\frac{n-1}{n+1}} \left( \lambda_3^{\frac{1}{n+1}} e^{-\sigma_1(t-\tau)} + \lambda_4^{\frac{1}{n+1}} e^{-\sigma_2(T-t)} \right) \\
				&= \| \bar m \|_{L^{\infty}(\Omega)}^{\frac{n}{n+1}} \| Dv \|_{L^{\infty}([0,T] \times \Omega)}^{\frac{n-1}{n+1}} \left( \lambda_3^{\frac{1}{n+1}} e^{\sigma_1 \tau} e^{-\sigma_1 t} + \lambda_4^{\frac{1}{n+1}} e^{-\sigma_2(T-t)} \right).
				\end{split}
			\end{equation}
			Now we proceed with the other term in brackets.
			\begin{equation} \label{muinterpol}
				\begin{split}
					\| \mu \|_{L^{\infty}((t-\tau,t);L^n(\Omega))} &= \underset{s \in (t-\tau,t)}{\sup} \left( \int_{\Omega} |\mu|^n(s,y) \di y \right)^{\frac{1}{n}} \\
					&\le \| \mu \|_{L^{\infty}([0,T] \times \Omega)}^{\frac{n-2}{n}} \| \bar m \|^{\frac{1}{n}}_{L^{\infty}(\Omega)} \underset{s \in (t-\tau,t)}{\sup} \left( \int_{\Omega} \frac{\mu^2(s,y)}{\bar m(y)} \di y \right)^{\frac{1}{n}} \\
					&\le \| \mu \|_{L^{\infty}([0,T] \times \Omega)}^{\frac{n-2}{n}} \| \bar m \|^{\frac{1}{n}}_{L^{\infty}(\Omega)} \underset{s \in (t-\tau,t)}{\sup} \left( \lambda_1 e^{-(\sigma-\delta)s} + \lambda_2 e^{-(\sigma+\delta)(T-s)} \right)^{\frac{1}{n}} \\
					&\le \| \mu \|_{L^{\infty}([0,T] \times \Omega)}^{\frac{n-2}{n}} \| \bar m \|^{\frac{1}{n}}_{L^{\infty}(\Omega)} \underset{s \in (t-\tau,t)}{\sup} \left( \lambda_1^{\frac{1}{n}} e^{-\frac{\sigma-\delta}{n}s} + \lambda_2^{\frac{1}{n}} e^{-\frac{\sigma+\delta}{n}(T-s)} \right) \\
					&\le \| \mu \|_{L^{\infty}([0,T] \times \Omega)}^{\frac{n-2}{n}} \| \bar m \|^{\frac{1}{n}}_{L^{\infty}(\Omega)} \left( \lambda_1^{\frac{1}{n}} e^{-\sigma_1(t-\tau)} + \lambda_2^{\frac{1}{n}} e^{-\sigma_2(T-t)} \right) \\
					&=\| \mu \|_{L^{\infty}([0,T] \times \Omega)}^{\frac{n-2}{n}} \| \bar m \|^{\frac{1}{n}}_{L^{\infty}(\Omega)} \left( \lambda_1^{\frac{1}{n}} e^{\sigma_1 \tau} e^{-\sigma_1 t} + \lambda_2^{\frac{1}{n}} e^{-\sigma_2(T-t)} \right) \\
				\end{split}
			\end{equation}

			Plugging estimates~\eqref{Dvinterpol} and~\eqref{muinterpol} into~\eqref{estmufin1} we get
			\begin{equation*}
				\begin{split}
				\underset{t \in [0,T]}{\sup} \|\mu(t,\cdot)\|_{L^{\infty}(\Omega)} &\le 2 \| \mu_0 \|_{L^{\infty}(\Omega)} + C \| \bar m \|_{L^{\infty}(\Omega)}^{\frac{n}{n+1}} \| Dv \|_{L^{\infty}([0,T] \times \Omega)}^{\frac{n-1}{n+1}} \left( \lambda_3^{\frac{1}{n+1}} e^{\sigma_1 \tau} e^{-\sigma_1 t} + \lambda_4^{\frac{1}{n+1}} e^{-\sigma_2(T-t)} \right)  \\
				&+ C \| \mu \|_{L^{\infty}([0,T] \times \Omega)}^{\frac{n-2}{n}} \| \bar m \|^{\frac{1}{n}}_{L^{\infty}(\Omega)} \left( \lambda_1^{\frac{1}{n}} e^{\sigma_1 \tau} e^{-\sigma_1 t} + \lambda_2^{\frac{1}{n}} e^{-\sigma_2(T-t)} \right) \\ 
				&\le 2 \| \mu_0 \|_{L^{\infty}(\Omega)} + C \| \bar m \|_{L^{\infty}(\Omega)}^{\frac{n}{n+1}} \left( \lambda_3^{\frac{1}{n+1}} e^{\sigma_1 \tau} e^{-\sigma_1 t} + \lambda_4^{\frac{1}{n+1}} e^{-\sigma_2(T-t)} \right) \\
				&+C \| \bar m \|^{\frac{1}{n}}_{L^{\infty}(\Omega)} \left( \lambda_1^{\frac{1}{n}} e^{\sigma_1 \tau} e^{-\sigma_1 t} + \lambda_2^{\frac{1}{n}} e^{-\sigma_2(T-t)} \right)
				\end{split}
			\end{equation*}
			
			Since $\lambda_i$, for all $i=1,...,4$, go to $0$ as $\gamma$ goes to $0$, and the same happens for $\|\mu_0\|_{L^{\infty}(\Omega)}$, we find that there exists $\tilde{\gamma} > 0$ (possibly smaller than $\bar{\gamma}$) such that 
			$$\|\mu(t,\cdot)\|_{L^{\infty}(\Omega)} \le \frac{\bar{\eps}}{2} \left( e^{-\sigma_1 t} + e^{-\sigma_2(T-t)} \right) \qquad \forall t \in [0,T],$$
			i.e.~\eqref{estmu}.
		\end{proof}
		
		\section{Fixed point theorem and existence of solutions} \label{fix2}
		
Throughout this section, we restrict ourselves to the periodic case $\Omega=\T^n$. We are going to build a solution $(u^T,m^T)$ to~\eqref{dynsys} which satisfies the turnpike property with respect to a stationary solution $(\bar u, \bar m)$ to~\eqref{statsys}, for every $T > 0$. Despite the estimates shown in Sections~\ref{nlturnl2} and~\ref{nlturnlinf} hold also for $\Omega=\R^n$, we restrict ourselves here to the torus to avoid further technicalities that come from the non compact nature of $\R^n$. We point out that solutions  $(u^T,m^T)$  are not \textit{a priori} unique, but our claim is that we are able to find at least one solution which is exponentially close to $(\bar u, \bar m)$. 

Then, in the second part of the present section, we will prove also an existence result for the infinite horizon problem
	\begin{equation} \label{infhor}
		\begin{cases}
			-\partial_t u - \Delta u + \frac{|Du|^2}{2} - f(x,m) + \delta u = 0,& \text{ on } (0,+\infty) \times \T^n \\
			\partial_t m - \Delta m - \text{div}(m Du)=0,& \text{ on } (0,+\infty) \times \T^n \\
			m(0,x)=m_0(x), \qquad u \text{ bounded }, & \text{ on } \T^n,
		\end{cases}
		\end{equation}
such that $m$ is exponentially close to $\bar m$ for large $t$. 

\medskip

Our first theorem (that is Theorem \ref{existenceintro} in the introduction) guarantees the existence of a solution to~\eqref{dynsys} which satisfies the exponential turnpike decay, for each $T > 0$, under some smallness assumptions on the initial / terminal data and \Sp, that allow to apply the estimates in Section~\ref{nlturnlinf}. 
		\begin{thm} \label{existence}
		Let $\Omega=\T^n$ and $T>0$ be fixed. Suppose that $(\bar u, \bar m)$ is a (classical) solution to~\eqref{statsys} and that \Sp holds. Then, there exist constants $\bar \eps > 0, \gamma > 0, \sigma > 0$ (which depend only on $\bar m, \bar u, \eta, C_f$ but not on the time horizon $T$) such that, if
		\begin{equation*}
					\delta < \sigma, \quad \| m_0(\cdot)-\bar m(\cdot) \|_{L^{\infty}(\T^n)} + \|u_T(\cdot) - \bar u(\cdot)\|_{W^{1,\infty}(\T^n)} < \gamma,
		\end{equation*}
		then there exists a (classical) solution $(u^T,m^T)$ to~\eqref{dynsys} satisfying the following properties:
				\begin{align} \label{turnpike}
				&\|m^T(t,\cdot)-\bar m(\cdot)\|_{L^{\infty}(\T^n)} \le \frac{\bar{\eps}}{2} \left( e^{-\sigma_1 t} + e^{-\sigma_2(T-t)} \right), \\
				&\left \| \sqrt{\bar m(\cdot)} \left| Du^T(t,\cdot) - D\bar u(\cdot) \right| \right \|_{L^2(\T^n)} \le \lambda_3 e^{-\sigma_1 t} + \lambda_4 e^{-\sigma_2(T-t)}, \label{turnpikeDuT}
			\end{align}
			for all $t \in [0,T]$, where $\sigma_1=\frac{\sigma-\delta}{n+1}$, $\sigma_2=\frac{\sigma+\delta}{n+1}$, $\lambda_3$ and $\lambda_4$ come from Lemma~\ref{lambdasmall}.
		\end{thm}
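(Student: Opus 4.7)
The plan is to run a continuation (homotopy) argument in the parameter $\nu \in [0,1]$ of the family \eqref{nupde}, exploiting the a priori estimates developed in Sections \ref{nlturnl2}--\ref{nlturnlinf}. I would fix $\bar\eps, \tilde\lambda, \sigma, \sigma_1, \sigma_2$ as produced by Propositions \ref{propfix1}--\ref{propfix2} and set $\lambda := \tilde\lambda$. By Remarks \ref{nurmk2} and \ref{nurmk3}, every estimate of the previous sections applies uniformly to \eqref{nupde} for each $\nu \in [0,1]$, since the rescaled data $\nu\mu_0, \nu v_T$ inherit the smallness \eqref{cond4}--\eqref{cond5}. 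At $\nu=1$, the system \eqref{nupde} coincides with \eqref{nlsys} for $(v,\mu) = (u-\bar u, m-\bar m)$, so it suffices to build a classical solution of \eqref{nupde} at $\nu=1$ satisfying \eqref{turnpike}; the weighted $L^2$ bound \eqref{turnpikeDuT} then follows from Lemma \ref{lambdasmall}.

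The target is to show that the set
\[
I := \bigl\{\nu \in [0,1] : \text{\eqref{nupde} admits a classical solution } (v_\nu,\mu_\nu) \text{ with } \|\mu_\nu(t,\cdot)\|_{L^\infty(\T^n)} \le \bar\eps(e^{-\sigma_1 t}+e^{-\sigma_2(T-t)}) \text{ on } [0,T]\bigr\}
\]
coincides with $[0,1]$, by proving it is nonempty, closed and open. Nonemptiness: $0 \in I$ because at $\nu=0$ the backward HJ equation for $v$ (with zero terminal datum) and the forward Fokker--Planck equation for $\mu$ (with zero initial datum) decouple and admit only the trivial solution, by the maximum principle and linear uniqueness. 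Closedness: given $\nu_k \to \nu_*$ in $I$ with associated $(v_{\nu_k},\mu_{\nu_k})$, Proposition \ref{propfix1} gives uniform $L^\infty$ smallness of $\mu_{\nu_k}$ and $Dv_{\nu_k}$; parabolic Schauder regularity on $\T^n$ then provides $C^{2+\alpha,1+\alpha/2}$ compactness, and the limit is a classical solution at $\nu_*$ inheriting the bound.

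The crux is openness. For $\nu_0 \in I$, Proposition \ref{propfix2} strictly improves the bound on $\mu_{\nu_0}$ to $(\bar\eps/2)(e^{-\sigma_1 t}+e^{-\sigma_2(T-t)})$, leaving a quantitative margin before the $\bar\eps$-threshold. I would then apply the implicit function theorem in $C^{2+\alpha,1+\alpha/2}$ to the coupled forward-backward parabolic system \eqref{nupde}: the key point is that the linearization at $(v_{\nu_0},\mu_{\nu_0})$ is an isomorphism, which reduces to showing that its homogeneous version (with vanishing data) admits only the trivial solution. This is precisely the infinitesimal translation of \Sp combined with the smallness of $(v_{\nu_0},\mu_{\nu_0})$: repeating the Lyapunov computation of Proposition \ref{phinl1} for the linearized problem yields a strictly coercive energy identity that forces the trivial solution. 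The strict gap $\bar\eps/2$ versus $\bar\eps$ then ensures the turnpike bound is preserved under small $\nu$-perturbations, so a neighbourhood of $\nu_0$ lies in $I$. Hence $I=[0,1]$, and $u^T := \bar u + v_1$, $m^T := \bar m + \mu_1$ is the claimed classical solution to \eqref{dynsys}, with \eqref{turnpike} being the defining property of $I$. The main obstacle is exactly this openness step — transferring the second-order positivity \Sp into invertibility of the linearized forward-backward operator at a nontrivial base solution, uniformly in $\nu$.
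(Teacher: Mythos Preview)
Your continuation scheme and the paper's proof share the same homotopy parameter $\nu$ and rely on the same a priori estimates (Propositions \ref{propfix1}--\ref{propfix2}, Remarks \ref{nurmk2}--\ref{nurmk3}), but the technical engine is different. The paper does \emph{not} invoke the implicit function theorem on the coupled forward--backward system. Instead it applies the Leray--Schauder fixed point theorem (Theorem \ref{fixedGT}) to a \emph{decoupled} map $\mathcal{F}(\rho,\nu)$: given $\rho$, first solve the backward HJ equation for $v$ with source $f(x,\bar m+\nu\rho)-f(x,\bar m)$, then solve the forward Fokker--Planck equation for $\mu$ with drift $D\bar u+Dv$; a fixed point of $\mathcal{F}(\cdot,1)$ is a solution of \eqref{nlsys}. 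The required ingredients are then purely soft: continuity and compactness of $\mathcal{F}$ (from parabolic regularity on $\T^n$), $\mathcal{F}(\cdot,0)=0$, and the a priori bound $\triplenorm{\mu}_X\le \bar\eps/2<\bar\eps$ on every fixed point, which is exactly Proposition \ref{propfix2}. No invertibility of any linearization is needed.

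Your openness step, by contrast, has a genuine gap. You write that invertibility of the linearized operator ``reduces to showing that its homogeneous version admits only the trivial solution'', but this reduction presupposes a Fredholm alternative (index zero) for the \emph{coupled forward--backward} linearized system in $C^{2+\alpha,1+\alpha/2}$. That is not standard: forward--backward parabolic systems lack the semigroup/maximal-regularity structure that gives single parabolic equations their Fredholm property, and surjectivity (solvability of the linearized system with arbitrary right-hand side and inhomogeneous data) must be established independently. Repeating the Lyapunov computation of Proposition \ref{phinl1} at best yields uniqueness (trivial kernel), not existence. This is precisely the obstacle the paper sidesteps by passing to the decoupled map $\mathcal{F}$, for which each of the two single parabolic equations is individually well-posed and Leray--Schauder requires only compactness plus the a priori bound you already have.
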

		
		The proof uses the standard \textit{Leray-Schauder fixed point theorem}:
		\begin{thm}
			Let $M>0$ and $X$ be a Banach space, and let $\mathcal{T}$ be a compact mapping of $\overline{B_M(0)} \times [0,1]$ into $X$ such that $\mathcal{T}(x,0)=0$ for all $x \in X$. Suppose that 
			\begin{equation*}\label{xM}
			\| x \|_X < M
			\end{equation*}
			for all $(x,\nu) \in \overline{B_M(0)} \times [0,1]$ satisfying $x = \mathcal{T}(x,\nu)$. Then, the mapping $\mathcal{T}(x,1)$ has a fixed point in $B_M(0)$.
			\label{fixedGT}
		\end{thm}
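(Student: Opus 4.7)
The statement is the classical Leray--Schauder fixed point theorem; the natural route is via topological degree for compact perturbations of the identity. The hypotheses are tailored precisely to a homotopy-invariance argument on the ball $B_M = \{x \in X : \|x\|_X < M\}$: joint compactness of $T$ gives an admissible compact homotopy, the a priori bound \eqref{xM} keeps $x - T(x,\nu)$ nowhere zero on $\partial B_M$, and the assumption $T(\cdot,0) = 0$ provides a trivial endpoint where the degree is explicitly computable.

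The first and main step is to set up the Leray--Schauder degree $\deg(I - S, B_M, 0)$ for a compact map $S \colon \overline{B_M} \to X$ satisfying $x \neq S(x)$ on $\partial B_M$. One approximates $S$ uniformly on $\overline{B_M}$ by maps $S_n$ whose range lies in a finite-dimensional subspace $V_n$ (possible since $S(\overline{B_M})$ is relatively compact), then defines $\deg(I - S, B_M, 0)$ as the Brouwer degree of $I - S_n$ on $B_M \cap V_n'$ for any finite-dimensional $V_n' \supseteq V_n$ containing the origin, and verifies independence of the approximation. This construction yields a $\mathbb{Z}$-valued degree enjoying normalization ($\deg(I, B_M, 0) = 1$), solvability ($\deg(I - S, B_M, 0) \neq 0$ implies that $S$ has a fixed point in $B_M$), and homotopy invariance under admissible compact homotopies.

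With the degree at hand, the proof of the theorem is immediate. Set $S_\nu(x) := T(x,\nu)$. By hypothesis \eqref{xM}, no $x \in \partial B_M$ satisfies $x = S_\nu(x)$, so $\deg(I - S_\nu, B_M, 0)$ is well defined for every $\nu \in [0,1]$. The joint compactness of $T \colon X \times [0,1] \to X$ makes $\{S_\nu\}_{\nu \in [0,1]}$ an admissible compact homotopy, so
\[
\deg(I - S_1, B_M, 0) = \deg(I - S_0, B_M, 0) = \deg(I, B_M, 0) = 1,
\]
where the second equality uses $S_0 \equiv 0$ and the third is normalization. By the solvability property there exists $x \in B_M$ with $x = T(x,1)$.

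The main (and essentially only) obstacle is foundational: constructing the Leray--Schauder degree and verifying its three defining properties. None of this involves the particular $T$ at hand; once the degree machinery is in place, the Leray--Schauder principle is a one-line application of homotopy invariance combined with the trivial computation at $\nu = 0$. A degree-free alternative is to retract $X$ onto $\overline{B_{2M}}$ via the radial map $r(x) = x \cdot \min\{1,\, 2M/\|x\|_X\}$, apply Schauder's fixed point theorem to $x \mapsto T(r(x), 1)$ on the closed convex set $\overline{B_{2M}}$ (whose image is relatively compact by compactness of $T$), and then rule out fixed points with $\|x\|_X \ge M$ using \eqref{xM} directly; this works but obscures the role of the continuous parameter $\nu$ that the degree-theoretic proof handles transparently.
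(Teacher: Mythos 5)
This statement is not proved in the paper at all: it is quoted from \cite{GT77} and used as a black box, so there is no internal argument to compare yours with. Your principal route --- constructing the Leray--Schauder degree by finite-dimensional approximation of compact maps and then combining normalization, homotopy invariance of $\deg(I-T(\cdot,\nu),B_M,0)$ (admissible precisely because \eqref{xM} excludes fixed points on $\partial B_M$), and the solvability property --- is the standard correct proof, with the only real work being the foundational construction of the degree, which you identify honestly. It also yields the refinement the authors record immediately after the statement, namely that \eqref{xM} need only be checked for $(x,\nu)\in \overline{B}_M(0)\times[0,1]$, since only the absence of zeros of $I-T(\cdot,\nu)$ on $\partial B_M$ enters. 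For comparison, the proof in Gilbarg--Trudinger is degree-free: it folds the parameter $\nu$ into the radial variable of the ball so as to build a single compact map to which a Schauder/Schaefer-type theorem applies; your argument is heavier on foundations but makes the role of the homotopy more transparent.

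The ``degree-free alternative'' you sketch at the end, however, does not work, and the defect is structural: that argument uses only the map $T(\cdot,1)$ and the bound \eqref{xM}, never the homotopy nor the hypothesis $T(x,0)=0$, and the conclusion is false under those data alone (on $X=\mathbb{R}$ the map $x\mapsto x+1$ is compact, has no fixed points, hence satisfies the bound vacuously, and of course admits no fixed point). Concretely, two steps fail: $x\mapsto T(r(x),1)$ need not map $\overline{B}_{2M}$ into itself, so Schauder does not apply as stated; and even after forcing a self-map by a further radial truncation, a fixed point $x$ with $\|x\|_X\ge 2M$ satisfies $x=T(r(x),1)$ with $r(x)\neq x$, or $r(x)=\sigma T(r(x),1)$ with $\sigma<1$, neither of which is a fixed-point relation for $T(\cdot,\nu)$, so \eqref{xM} cannot be invoked to rule it out. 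Since this is only a side remark, your main degree-theoretic proof stands; but the aside should be deleted or replaced by the genuine Gilbarg--Trudinger reduction, which necessarily uses the full homotopy.
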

		
		The proof can be found for instance in \cite[Theorem 11.6]{GT77}. Note that in this reference the theorem is stated for $\mathcal{T}$ defined on the whole $X \times [0,1]$ (though the proof does not involve the evaluation of the operator outside $\overline{B_M(0)}$); it is possible anyway to extend $\mathcal{T}$ from $\overline{B_M(0)}$ to $X$ via a retraction (as in \cite[Lemma 11.7]{GT77}).

		\begin{proof}[Proof of Theorem~\ref{existence}]
			Let $T > 0$ be fixed. To simplify the notation, throughout the proof, we drop the superscript $T$; thus, the triple $(\rho,u,m)$ should be intended as $(\rho^T,u^T,m^T)$. The ambient Banach space is defined as follows:
			$$X:=\mathcal{C}([0,T] \times \T^n).$$
			We look for a solution in this space. We equip it with the norm
			\begin{equation}
				\triplenorm{\rho}_X := \sup_{t \in [0,T]} \left( \frac{\|\rho(t,\cdot)\|_{{\infty},\T^n}}{e^{-\sigma_1 t} + e^{-\sigma_2(T-t)}} \right),
				\label{norm3}
			\end{equation}
			where $\sigma_1=\frac{\sigma-\delta}{n+1}$ and $\sigma_2=\frac{\sigma+\delta}{n+1}$ and $\sigma$ comes from Proposition~\ref{phinl1}. \\
			Notice that $\triplenorm{\rho}_X$ and
			\begin{equation}
				\|\rho\|_X:=\sup_{t \in [0,T]} \|\rho(t,\cdot)\|_{L^{\infty}(\T^n)}
				\label{norm2}
			\end{equation} 
			are equivalent ($T$ is fixed). In particular, $(X,\triplenorm{\cdot}_X)$ is a Banach space, since $(X,\| \cdot \|_X)$ is.
			Therefore, to prove that a sequence is converging in $X$, we can use equivalently either the $\triplenorm{\cdot}_X$ norm or the $\| \cdot \|_X$ one.
			Now, let
			$$U:=\overline{B_{\bar{\eps}}(0)} = \left\{ \rho \in X : \triplenorm{\rho}_X\le  \bar{\eps} \right\},$$
			where $\bar{\eps}$ comes from Proposition~\ref{propfix1}. 
			We define $\mathcal{F} :  U \times [0,1] \to X$ such that $\mathcal{F}(\rho,\nu)=\mu$, where $\mu$ is the solution to the following PDE system:
			\begin{equation}
				\begin{cases}
					-\partial_t v - \Delta v + \langle Dv,D\bar{u} \rangle - f(x,\bar{m}+ \nu \rho) + f(x,\bar{m}) + \delta v + \frac{|Dv|^2}{2} = 0,& \text{ on } (0,T) \times \T^n \\
					\partial_t \mu - \Delta \mu - \text{div}(\bar{m}Dv) - \text{div}(\mu D\bar{u}) - \text{div}(\mu Dv)=0,& \text{ on } (0,T) \times \T^n \\
					\mu(0,\cdot)=\nu \mu_0(\cdot) \qquad v(T,\cdot)=\nu v_T(\cdot).& \text{ on } \T^n,
				\end{cases}
				\label{pdefixed}
			\end{equation}
		where $\mu_0=m_0-\bar m$ and $v_T=u_T-\bar u$. \\
		Let us briefly describe how the operator acts. It takes $\rho \in U$, sends it to the solution $v$ to the first equation in~\eqref{pdefixed}, with the corresponding terminal condition, and then maps it to $\mu$, solution to the second equation of the same PDE system, with the related initial datum. Notice that the fixed points $\mu=\mathcal{F}(\mu,1)$ are precisely the solutions to~\eqref{nlsys}. \\
			We need to prove that $\mathcal{F}$ satisfies the assumptions of Theorem~\ref{fixedGT}.
			\medskip
			
			\textbf{Step 1: $\mathcal{F}$ is continuous.} \\
			Let $\{ (\rho_k,\nu_k) \}_k$ be a sequence in $U \times [0,1]$ such that $(\rho_k,\nu_k) \to (\rho,\nu)$ as $k \to +\infty$, for some $\rho \in X$ and some $\nu \in [0,1]$, i.e.
			\begin{equation*}
				\| \rho_k - \rho \|_X \to 0 \qquad | \nu_k - \nu | \to 0,
			\end{equation*}
			as $k \to +\infty$. We have to prove that $\mu_k \to \mu$ in $X$, where $\mu_k:=\mathcal{F}(\rho_k,\nu_k)$ and $\mu:=\mathcal{F}(\rho,\nu)$. We can do it by using any of the equivalent norms~\eqref{norm3} and~\eqref{norm2}; in particular, we wish to prove that
			\begin{equation*} 
				\| \mu_k - \mu \|_X = \underset{t \in [0,T]}{\sup} \| \mu_k(t,\cdot) - \mu(t,\cdot) \|_{L^{\infty}(\T^n)} \to 0, \qquad \text{ for } k \to +\infty.
			\end{equation*}
			Recall that, by assumption, $\triplenorm{\rho_k} \le \bar \eps$, hence the same procedure described in Proposition~\ref{propfix1} allows us to say that
			$$\| Dv_k(t,\cdot) \|_{L^{\infty}(\T^n)} \le \Theta < 1,$$
			for all $t \in [0,T]$ and for all $k$.
			The same reasoning holds also for $\rho$ and $Dv$, since $\triplenorm{\rho} \leq  \bar \eps$. \\
			First of all, we define $\hat v_k:=v_k - v$. While computing the difference between the equations satisfied by $v_k$ and $v$, the following term involving $f$ appears:
			\begin{equation*}
				-f(x,\bar m + \nu_k \rho_k) + f(x,\bar m + \nu \rho) = - \xi_\nu(t,x) (\nu_k \rho_k- \nu \rho),
			\end{equation*}
			where $\xi_\nu(t,x)=\int_0^1 f_m(x,\bar m + \nu \rho + z(\nu_k \rho_k - \nu \rho)) \, \di z$. Hence, the equation satisfied by $\hat v_k$ is
			\begin{equation*}
				\begin{cases}
					-\partial_t \hat v_k - \Delta \hat v_k + \langle D\hat v_k , D\bar u \rangle - \xi_{\nu} (\nu_k \rho_k - \nu \rho) + \delta \hat v_k + \frac{1}{2} \langle D \hat v_k , Dv_k + Dv \rangle = 0,& \text{ on } (0,T) \times \T^n, \\
					\hat v_k(T,\cdot) = (\nu_k - \nu) v_T(\cdot),& \text{ on } \T^n.
				\end{cases}
			\end{equation*}
			In the same way as~\eqref{bothsides} is obtained in Lemma~\ref{lemv}, we get the following estimate:
			\begin{equation*}
				\begin{aligned}
					\| \hat v_k (t,\cdot) \|_{L^{\infty}(\T^n)} & \le | \nu_k - \nu | \| v_T(\cdot) \|_{L^{\infty}(\T^n)} + \int_t^T \| \xi_\nu \|_X \cdot \| (\nu_k \rho_k - \nu \rho) (s,\cdot) \|_{L^{\infty}(\T^n)} \di s \\
					&\le | \nu_k - \nu |( \| v_T(\cdot) \|_{L^{\infty}(\T^n)} + T C_f \| \rho_k \|_X ) + T C_f \| \rho_k - \rho \|_X,
				\end{aligned}
			\end{equation*}
			for all $t \in [0,T]$. Since the sequence $\rho_k$ converges to $\rho \in X$, $\nu_k \to \nu$ in $[0,1]$ by hypothesis and $v_T$ are bounded, we have that
		 	$$\| v_k - v \|_X \to 0, \qquad \text{ for } k \to +\infty.$$
		 	Let us define $F_k:=Dv_k + Dv$. We apply Lemma~\ref{app2} to the equation in~\eqref{eqtildev}, with \mbox{$H=- D \bar u - \frac{1}{2}F_k$} and \mbox{$g = \xi_{\nu}(\nu_k \rho_k - \nu \rho) - \delta (v_k - v)$}. Note that $H$ is bounded uniformly in $k$ because both $Dv_k$ and $Dv$ are. Therefore, we get:
		 	\begin{equation*}
		 		\begin{aligned}
		 		\| D \hat v_k \|_X &\le 2 \| D \hat v_k(T,\cdot) \|_{L^{\infty}(\T^n)} + C \left( \| \xi_{\nu} \|_X \cdot \| \nu_k \rho_k - \nu \rho \|_X + (\delta+1) \|v_k - v \|_X \right) \\
		 		& \le 2 \left( | \nu_k - \nu | \| Dv_T(\cdot) \|_{L^{\infty}(\T^n)} + C \cdot C_f \|\rho_k\|_X \right) + C \left(C_f \| \rho_k - \rho \|_X + (\delta + 1) \| v_k - v \|_X \right),
		 		\end{aligned}
		 	\end{equation*}
		 	where $C$ depends on $\| \bar u \|_{L^{\infty}(\T^n)}$ and $\Theta$, with the right hand side going to $0$ as $k \to +\infty$.
		 	
		 	\medskip
		 
		 	Finally, let $\hat \mu_k:=\mu_k - \mu$. This function satisfies the following PDE:
		 	\begin{equation*}
		 		\begin{cases}
		 			\partial_t \hat \mu_k - \Delta \hat \mu_k - \text{div}(\hat \mu_k D \bar u) - \text{div}(\bar m D\hat v_k) - \text{div}(\hat \mu_k Dv) - \text{div}(\mu_k D \hat v_k) = 0,& \text{ on } (0,T) \times \T^n \\
		 			\hat \mu_k (0,\cdot) = (\nu_k - \nu) \mu_0(\cdot),& \text{ on } \T^n.
		 		\end{cases}
		 	\end{equation*}
		 	A direct application of Lemma~\ref{app1} requires to control the spatial $L^n$ norm of $\hat \mu_k$, at least on $[\tau,T]$. To avoid it, we take advantage of the fact that $T$ is fixed, of estimate~\eqref{roughbound2} in Remark~\ref{roughrmk} and the boundedness of $\T^n$ to get:
		 	\begin{equation} \label{hatmuk}
		 		\begin{split}
		 		\| \hat \mu_k (t,\cdot) \|_{L^{\infty}(\T^n)} &\le C \left( | \nu_k - \nu | \| \mu_0(\cdot) \|_{L^{\infty}(\T^n)} + \| \bar m D \hat v_k \|_{L^{\infty}([0,T];L^{n+1}(\T^n))} + \| \mu_k  D \hat v_k \|_{L^{\infty}([0,T];L^{n+1}(\T^n))} \right) \\
		 		&\le C \left( | \nu_k - \nu | \| \mu_0(\cdot) \|_{L^{\infty}(\T^n)} + \| \bar m D \hat v_k \|_X + \| \mu_k \|_X \cdot \| D \hat v_k \|_X \right),
		 		\end{split}
		 	\end{equation}
		 	where $C$ depends on $\tau,T,\| D \bar u \|_{L^{\infty}(\T^n)}$. Moreover, since $\| Dv_k \|_{X}$ are uniformly bounded in $k$, a further application of estimate~\eqref{roughbound2} in Remark~\ref{roughrmk}, we observe that also $\| \mu_k \|_X$ are uniformly bounded in $k$. This enables us to improve~\eqref{hatmuk} as follows:
		 	\begin{equation*}
		 		\| \hat \mu_k (t,\cdot) \|_{L^{\infty}(\T^n)} \le C \left( | \nu_k - \nu | \| \mu_0(\cdot) \|_{L^{\infty}(\T^n)} + \| \bar m \|_{L^{\infty}(\T^n)} \cdot \| D \hat v_k \|_X + \tilde{C} \| D \hat v_k \|_X \right), \qquad \forall t \in [0,T].
		 	\end{equation*}
		 	The right hand side goes to $0$, hence we conclude that
		 	$$\| \mu_k - \mu \|_X \to 0,$$
		 	for $k \to +\infty$.
		 	
		 	\medskip

			\textbf{Step 2: $\mathcal{F}$ is compact.} 

At this stage of the proof, we take advantage of the compactness of $\T^n$ to deduce compactness for $\mathcal{F}$. To reach this aim, we wish to prove that, given a sequence $\mu_k:=\mathcal{F}(\rho_k,\nu_k)$, for some $\{ (\rho_k,\nu_k) \} \subseteq U \times [0,1]$, it admits a limit point $\mu \in X$ with respect to the convergence given by either the $\triplenorm{\cdot}_X$ or the $\|\cdot\|_X$ norm. 
By the previous step, we already know that both $\mu_k$ and $v_k$ are bounded with respect to the norm on $X$ uniformly in $k$. 
Then, since $D \bar u$, $\bar m$, $ D \bar m$, $v_k$, $\rho_k$ are uniformly (in $k$) bounded in $L^{p}(\T^n)$ for any $p \geq 1$ and the initial / terminal data are regular, from standard results in parabolic regularity theory (see for instance~\cite{LSU68}), we deduce that $\mu_k$ is bounded in $\mathcal{C}^{\theta,\theta / 2} ([0,T] \times \T^n)$ for some $\theta \in (0,1)$ independent of $k$.
As a consequence, the whole sequence $\{ \mu_k \}$ is equi-continuous in $X$, thus one can apply Ascoli-Arzelà theorem. Therefore, there exists $\mu \in X$ and a subsequence $\{ \mu_{k_j} \}$ such that 
$$\|\mu_{k_j}-\mu\|_X \to 0 \qquad \text{ for } j \to +\infty,$$
hence the compactness of $\mathcal{F}$.

\medskip

\textbf{Step 3: Fixed points satisfy $\mu=\mathcal{F}(\mu,\nu) \not \in \partial U$, i.e. $\triplenorm{\mu}_X < \bar \eps$.} \\
In this final step we will use the \textit{a priori} estimates of Sections~\ref{nlturnl2} and~\ref{nlturnlinf}. \\
Note that Proposition~\ref{propfix2} requires the fundamental hypothesis~\Sp and the integrability assumptions~\eqref{integrab} (condition \eqref{cond5} is a consequence of \eqref{cond4} on the torus). To get the latter, we show first that $(v,\mu)$ possess further regularity properties, by means of a bootstrap argument. First of all, starting from \mbox{$\mu \in \mathcal{C}([0,T] \times \T^n)$}, we immediately observe that both $v$ and $Dv$ are bounded with respect to the norm on $X$, hence they belong to any $L^p$ space, for $p \ge 1$; thus, standard parabolic regularity theory, for instance~\cite{LSU68,Lieb}, yields that \mbox{$\mu \in \mathcal{C}^{\theta,\theta / 2}([0,T] \times \T^n)$} for some $\theta \in (0,1)$, as in the previous compactness step. Again, making use of the extra regularity just derived, we see that \mbox{$v \in \mathcal{C}^{2+\theta,1+\theta / 2}([0,T] \times \T^n)$} by parabolic Schauder theory. Computing the divergence terms in the second equation yields that also \mbox{$\mu \in \mathcal{C}^{2+\theta,1+\theta / 2}([0,T] \times \T^n)$}. Actually, $v$ has at least one additional continuous spatial derivative, hence the regularity assumptions in ~\eqref{integrab} are satisfied because $\bar m$ has a strictly positive minimum on $\T^n$. Note also that, $\Omega=\T^n$ and the structure of the equation for $\mu$ has only terms in divergence form, we notice that, by integration by parts,
$$\frac{d}{dt} \int_{\T^n} \mu(t,x) \, \di x = 0,$$
hence
$$\int_{\T^n} \mu(t,x)\, \di x = \int_{\T^n} \mu_0(x) \, \di x = 0.$$

We finally apply Proposition~\ref{propfix2} (note that the estimate holds for all $\nu$ in view of Remarks~\ref{nurmk2} and~\ref{nurmk3}) to get that
$$\triplenorm{\mu}_X \le \bar{\eps} \Longrightarrow \triplenorm{\mu}_X \le \frac{\bar{\eps}}{2} < \bar \eps,$$
i.e. $\mu \not \in \partial U$, hence the fixed points of $\mathcal{F}(\rho,\nu)$ cannot lie on the boundary of $U$ for all $\nu \in [0,1]$.

\medskip

Lastly, we need to verify that $\mathcal{F}(\cdot,0)=0$ on $U$. Note that $\mu:=\mathcal{F}(\rho,0)$ satisfies the following PDE system
\begin{equation*}
	\begin{cases}
		-\partial_t v - \Delta v + \langle Dv,D\bar{u} \rangle + \delta v + \frac{|Dv|^2}{2} = 0,& \text{ on } (0,T) \times \T^n \\
		\partial_t \mu - \Delta \mu - \text{div}(\bar{m}Dv) - \text{div}(\mu D\bar{u}) - \text{div}(\mu Dv)=0,& \text{ on } (0,T) \times \T^n \\
		\mu(0,\cdot)=0 \qquad v(T,\cdot)=0.& \text{ on } \T^n
	\end{cases}
\end{equation*}
Since $v \equiv 0$ is the unique solution of the first equation, we deduce that $\mu\equiv0$, by uniqueness for the second equation.

\medskip

Therefore, the operator $\mathcal{F}(\rho,1)$ admits a fixed point $\mu \in \bar{U}$, which means a couple $(v,\mu)$ that solves system~\eqref{nlsys} and satisfies the turnpike property. \\
Finally, if we take $m^T:=\mu + \bar{m}$ and $u^T:=v+\bar{u}$, we get a solution to~\eqref{dynsys} on $\T^n$ which satisfies the turnpike property~\eqref{turnpike}. As regards property~\eqref{turnpikeDuT}, it follows from Lemma~\ref{lambdasmall}.
\end{proof}

Estimate~\eqref{turnpike} plays a fundamental role in the proof of the existence of solutions to the infinite horizon problem. More precisely, the fact that the constant $\bar \eps$ is independent of $T$ allows us to pass to the limit as $T \to +\infty$ and to preserve the same estimate also for the limit function. We are able to state the existence theorem for~\eqref{infhor}.
\begin{thm} \label{exinfhor}
	Let $\Omega=\T^n$. Suppose that $(\bar u,\bar m)$ is a (classical) solution to~\eqref{statsys} and that~\Sp holds. Then, there exist constants $\gamma > 0, \bar \eps > 0, \sigma > 0$ (which depend only on $\bar m, \bar u, \eta, C_f$) such that if
		\begin{equation*}
		\delta < \sigma, \quad \| m_0(\cdot)-\bar m(\cdot) \|_{L^{\infty}(\T^n)} < \gamma,
	\end{equation*}
	then there exists a (classical) solution $(u^\infty,m^\infty)$ to~\eqref{infhor} satisfying the following properties:
		\begin{align*}
			&\|m^{\infty}(t,\cdot)-\bar m(\cdot)\|_{L^{\infty}(\T^n)} \le \frac{\bar{\eps}}{2} e^{-\sigma_1 t}, \\
			&\left \| \sqrt{\bar m(\cdot)} \left| Du^{\infty}(t,\cdot) - D\bar u(\cdot) \right| \right \|_{L^2(\T^n)} \le \lambda_3 e^{-\sigma_1 t},
		\end{align*}
	for all $t > 0$, where $\sigma_1=\frac{\sigma-\delta}{n+1}$ and $\lambda_3$ comes from Lemma~\ref{lambdasmall}.
\end{thm}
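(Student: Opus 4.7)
The plan is to construct the infinite-horizon solution as a subsequential limit of finite-horizon solutions with terminal data set equal to $\bar u$. Fix a sequence $T_k \to +\infty$ and apply Theorem~\ref{existence} on $[0,T_k]$ with terminal condition $u_{T_k} := \bar u$. Since $\|u_{T_k} - \bar u\|_{W^{1,\infty}(\T^n)} = 0$, the smallness hypothesis of that theorem reduces exactly to $\|m_0 - \bar m\|_{L^\infty(\T^n)} < \lambda$, which is in force. This produces, for every $k$, a classical solution $(u^{T_k}, m^{T_k})$ of~\eqref{dynsys} on $[0,T_k] \times \T^n$ satisfying
\begin{align*}
\|m^{T_k}(t,\cdot) - \bar m\|_{L^\infty(\T^n)} &\le \frac{\bar\eps}{2}\bigl( e^{-\sigma_1 t} + e^{-\sigma_2(T_k-t)} \bigr), \\
\|\sqrt{\bar m}\,|Du^{T_k}(t,\cdot) - D\bar u|\|_{L^2(\T^n)}^2 &\le \lambda_3\, e^{-\sigma_1 t} + \lambda_4\, e^{-\sigma_2(T_k-t)},
\end{align*}
for every $t \in [0,T_k]$, where the constants $\bar\eps, \sigma_1, \sigma_2, \lambda_3, \lambda_4$ are independent of $k$.

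Next I would extract a limit via local parabolic regularity. From the proof of Theorem~\ref{existence}, and in particular Proposition~\ref{propfix1} together with Lemmas~\ref{lemv} and~\ref{lemDw}, the quantities $\|v^{T_k}\|_{W^{1,\infty}([0,T_k] \times \T^n)}$ and $\|\mu^{T_k}\|_{L^\infty([0,T_k] \times \T^n)}$ are uniformly bounded in $k$, where $v^{T_k} := u^{T_k} - \bar u$ and $\mu^{T_k} := m^{T_k} - \bar m$. On any compact slab $[0,S] \times \T^n$ the bootstrap argument already carried out in Step~3 of the proof of Theorem~\ref{existence} (successive $L^p$ and Schauder estimates on the divergence-form equation for $\mu^{T_k}$ and on the semilinear equation for $v^{T_k}$, exploiting the strict positivity of $\bar m$ on $\T^n$) then yields uniform $\mathcal{C}^{2+\theta, 1+\theta/2}([0,S] \times \T^n)$ bounds on both $u^{T_k}$ and $m^{T_k}$. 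Arzel\`a--Ascoli combined with a diagonal extraction over $S \to +\infty$ produces a subsequence, still denoted $(u^{T_k}, m^{T_k})$, converging in $\mathcal{C}^{2,1}_{\mathrm{loc}}([0,+\infty) \times \T^n)$ to a classical limit $(u^\infty, m^\infty)$. This convergence is strong enough to pass to the limit in every term of~\eqref{dynsys}, including the nonlinearity $|Du|^2/2$ and the source $f(x,m)$, and to preserve the initial condition $m^\infty(0,\cdot) = m_0$.

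To conclude, the uniform $L^\infty$ bound on $v^{T_k}$ from Lemma~\ref{lemv} passes to the limit, so $u^\infty - \bar u$ is bounded on $[0,+\infty) \times \T^n$, hence $u^\infty$ is bounded and~\eqref{infhor} is satisfied in the classical sense. Fixing $t \ge 0$ and sending $T_k \to +\infty$ in the two finite-horizon turnpike estimates kills the terms $e^{-\sigma_2(T_k - t)}$, leaving precisely the two decay inequalities claimed in the statement; for the weighted $L^2$ bound one uses local uniform convergence of $Du^{T_k}$ together with dominated convergence on the compact torus. The main technical obstacle I foresee lies in the compactness step: one must ensure that the bootstrap produces parabolic regularity estimates uniform in $T_k$ on every fixed time slab, which rests essentially on the fact, stressed throughout Sections~\ref{nlturnl2} and~\ref{nlturnlinf}, that all the constants involved depend only on $\bar m, \bar u, \eta, C_f$ and \emph{not} on the time horizon.
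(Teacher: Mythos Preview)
Your proposal is correct and follows essentially the same route as the paper: apply Theorem~\ref{existence} on $[0,T_k]$ with terminal datum $u_{T_k}=\bar u$, use the $T$-independent a priori bounds together with local parabolic regularity to gain compactness, run a diagonal extraction, and pass to the limit in both the system and the turnpike estimates so that the $e^{-\sigma_2(T_k-t)}$ terms disappear. The only cosmetic difference is that the paper works with integer horizons $k$ and extracts convergence in $C^0$ (upgrading regularity afterwards), whereas you go straight for uniform $\mathcal{C}^{2+\theta,1+\theta/2}$ bounds on each slab; both variants are standard and lead to the same conclusion.
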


\begin{proof} The proof relies on a diagonal argument.
	From Theorem~\ref{existence}, we already know that, for each $T > 0$, there exists a solution $(v^T,\mu^T)$ to~\eqref{nlsys} which satisfies the turnpike property, if the initial / terminal data are small enough. 
	
	Let $k \in \mathbb{N}$ and consider a solution $(v_k,\mu_k)$ to the following MFG system
	\begin{equation} \label{finitek}
		\begin{cases}
			-\partial_t v_k - \Delta v_k + \langle Dv_k,D\bar{u} \rangle - f(\bar{m}+\mu_k) + f(\bar{m}) + \frac{|Dv_k|^2}{2} + \delta v_k=0,&  \text{ on } (0,k) \times \T^n \\
			\partial_t \mu_k - \Delta \mu_k - \text{div}(\bar{m}Dv_k) - \text{div}(\mu_k D\bar{u}) - \text{div}(\mu_k Dv_k)= 0,&  \text{ on } (0,k) \times \T^n  \\
			\mu_k(0,\cdot)=\mu_0(\cdot) \quad v_k(k,\cdot)=0,& \text{ on } \T^n, \\
		\end{cases}
	\end{equation}
	where $\mu_0=m_0-\bar m$.
	
	Fix now $j=1$. Each couple of functions $(v_k,\mu_k)$ admits a restriction to $[0,1] \times \T^n$ (if $k \geq 1$) which is again a solution to the equations in~\eqref{finitek}. Moreover, since by Theorem~\ref{existence}
	$$\| \mu_k(t,\cdot) \|_{L^{\infty}(\T^n)} \le \frac{\bar \eps}{2} \left( e^{-\sigma_1 t} + e^{-\sigma_2(k-t)} \right) \le \bar \eps \qquad \forall t \in [0,k],$$
	it is true that for all $k > 1$ (i.e. if $k$ is large enough)
	$$\sup_{t \in [0,1]} \| \mu_k(t,\cdot) \|_{L^{\infty}(\T^n)} \le \bar \eps,$$
	which means that $\{ \mu_k \}$ is equibounded on $[0,1] \times \T^n$. The same regularity argument used in Theorem~\ref{existence} (Step 2) enables us to establish that the sequence $\{ \mu_k \}$ is also equi-continuous on $[0,1] \times \T^n$. Therefore, Ascoli-Arzelà theorem yields that there exists a subsequence $\{ \tilde \mu_k^{(1)} \} \subseteq \{ \mu_k \}$ such that
	$$\tilde \mu_k^{(1)} \to \mu^{(1)},$$
	for some $\mu^{(1)}$ defined on $[0,1] \times \T^n$.
	
	We now do the same for $v_k$. We consider the sequence $\{ \tilde v_k^{(1)} \}$, corresponding to $\{ \tilde \mu_k^{(1)} \}$. As a first step, we know that, by the previous Theorem and Lemmata in Section~\ref{nlturnlinf}, 
	$$\sup_{t \in [0,1]} \| \tilde v_k^{(1)}(t,\cdot) \|_{L^{\infty}(\T^n)} \le \sup_{t \in [0,k]} \| \tilde v_k^{(1)}(t,\cdot) \|_{L^{\infty}(\T^n)} \le C,$$
	for some $C$ independent of $T$ and $k$. Moreover, parabolic Schauder regularity theory (as in the proof of Theorem~\ref{existence}, Step 3) yields that $\tilde v_k^{(1)}$ and $\tilde \mu_k^{(1)}$ are bounded in $C^{2+\theta, 1+\theta/2}([0,1] \times \T^n)$. We can apply again Ascoli-Arzelà theorem to extract a uniformly converging subsequence $v_k^{(1)} \to v^{(1)}$ for some $v^{(1)}$ defined on $[0,1] \times \T^n$.  Moreover, because of the global estimates for $v_k^{(1)}$ on $[0,k] \times \T^n$, also $\| v^{(1)}(1,\cdot) \|_{L^{\infty}(\T^n)} \le C$, where $C$ is the same as above.
	Finally, we take the subsequence $\{ \mu_k^{(1)} \}$, where each $\mu_k^{(1)}$ corresponds to $v_k^{(1)}$ in the couple $(v_k^{(1)},\mu_k^{(1)})$ which solves both equations in~\eqref{finitek}, by the $C^{2,1}$ convergence. Again, by uniqueness of limit, $\mu_k^{(1)} \to \mu^{(1)}$ on $[0,1] \times \T^n$ as $k \to +\infty$. \\
	Summing up, so far we have obtained subsequences $\{ \mu_k^{(1)} \}$ and $\{ v_k^{(1)} \}$ converging to $\mu^{(1)}$ and $v^{(1)}$ respectively, on $[0,1] \times \T^n$. \\
	To go on, fix now $j=2$. By repeating the same argument as above, one gets that there exists a subsequence $\{ \tilde \mu_k^{(2)} \} \subseteq \{ \mu_k^{(1)} \}$ such that
	$$\tilde \mu_k^{(2)} \to \mu^{(2)},$$
	for some $\mu^{(2)}$ defined on $[0,2] \times \T^n$. Notice that
	$$\mu^{(2)} \big|_{[0,1] \times \T^n} = \mu^{(1)},$$
	by uniqueness of limit, since $\{ \tilde \mu_k^{(2)} \}$ is a subsequence of $\{ \mu_k^{(1)} \}$. \\
	Analogously, we get that from the corresponding $\{ \tilde v_k^{(2)} \}$, one can extract a subsequence $\{ v_k^{(2)} \}$ which converges to some $v^{(2)}$ on $[0,2] \times \T^n$. \\
	We iterate the procedure; in the end, we obtain that there exist subsequences $\{ \mu_k^{(j)} \} \subseteq \{ \mu_k \}$, $\{ v_k^{(j)} \} \subseteq \{ v_k \}$ and sequences $\{ \mu^{(j)} \}$,$\{ v^{(j)} \}$ such that $\mu_k^{(j)} \to \mu^{(j)}$, $v_k^{(j)} \to v^{(j)}$ (in $\mathcal{C}^{2,1}$, locally in time and globally in space), as $k \to +\infty$, $\mu^{(j)} \big|_{[0,l] \times \T^n} = \mu^{(l)}$ and $v^{(j)} \big|_{[0,l] \times \T^n} = v^{(l)}$ for all $l < j$. As a consequence of these facts, one takes diagonal subsequences $\{ \mu^{(k)}_k \}$ and $\{ v^{(k)}_k \}$, which converge in $\mathcal{C}^{2,1}$, respectively, to the functions
	\begin{align*}
		&\mu(t,x):=\mu^{(j)}(t,x), \qquad \text{ if } t \in [0,j] \text{ for some }j, \\
		&v(t,x):=v^{(j)}(t,x), \qquad \text{ if } t \in [0,j] \text{ for some }j,
	\end{align*}
	where the convergence is local in time. \\
	Since the convergence in $C^{2,1}$ allows to pass to the limit also the nonlinear terms in the equations of~\eqref{nlsys}, the couple $(v,\mu)$ defined above gives a solution to the infinite horizon problem with initial condition given by $\mu_0=m_0 - \bar m$ and with $v$ bounded for all $t > 0$. In particular, this is true since $v$ is built by means of the functions $v^{(j)}$, which are all bounded by a constant independent of $T$. \\
	Finally, since each $\mu_k^{(k)}$ satisfies the turnpike property
	\begin{equation*}
		\| \mu_k^{(k)}(t,\cdot) \|_{L^{\infty}(\T^n)} \le \frac{\bar \eps}{2} \left( e^{-\sigma_1 t} + e^{-\sigma_2(k-t)} \right) \qquad \forall t \in [0,k],
	\end{equation*}
	if we fix $t \in [0,k]$, then
	$$\| \mu(t,\cdot) \|_{L^{\infty}(\T^n)} = \left \| \lim_{k \to +\infty} \mu_k^{(k)}(t,\cdot) \right \|_{L^{\infty}(\T^n)} = \lim_{k \to +\infty} \| \mu_k^{(k)}(t,\cdot) \|_{L^{\infty}(\T^n)} \le \frac{\bar \eps}{2} e^{-\sigma_1 t}.$$
	The previous inequality holds for all $t > 0$, hence the thesis, after having defined $m^{\infty}:=\mu + \bar m$ and $u^{\infty}:=v+\bar u$. In fact, also $u^{\infty}$ is bounded since both $v$ and $\bar u$ are.  As regards property~\eqref{turnpikeDuT}, it follows from Lemma~\ref{lambdasmall}, passing to the limit as $T \to +\infty$.
\end{proof}

\section{Appendix} \label{appendix}
	This final section is devoted to present the proof of some useful inequalities used in Section~\ref{nlturnlinf} to get the \textit{a priori} estimates. We point out that similar estimates can be found also in~\cite{CG22}; however, we need more precise ones, with constants independent of the time horizon $T$, and to do this we have to perform a slightly more refined work. \\
	Moreover, we specify that the estimates in this Section hold for both $\Omega = \T^n$ and $\Omega = \R^n$.
	
	First of all, we recall that the heat kernel on $\R^n$ is defined as the function
	\begin{equation*}
		\Gamma(t,x)=
		\begin{cases}
			\frac{1}{(4\pi t)^{\frac{n}{2}}} e^{-\frac{|x|^2}{4t}} ,& \text{ if } t > 0 \\
			0,& \text{ if } t \le 0,
		\end{cases}
	\end{equation*}
	for $x \in \mathbb{R}^n$. Recall that its $L^{1}$ norm over $\R^n$ is equal to 1, for $t > 0$. Moreover, one can consider the heat kernel defined on $\T^n$ by taking the periodization of the function $\Gamma$, as follows:
	\begin{equation*}
		x \mapsto \sum_{k \in \mathbb{Z}^n} \Gamma(t,x+2k\pi);
	\end{equation*}
	such function is defined for all $x \in \R^n$ and it is periodic, hence it can be seen as a function on $\T^n$. The properties recalled in the following Lemma are standard, and valid both on $\Omega=\R^n$ and on $\Omega=\T^n$ (for $\Gamma$ and its periodization respectively).
\begin{lem}	\label{lemfix1}
		There exist two constants $C_1(n,p)$ and $C_2(n,p)$ such that, for all $p \in [1,+\infty]$,
		\begin{equation*}
			\| \Gamma(t,x) \|_{L^{p}(\Omega)} \le C_1(n,p) t^{-\frac{n}{2p'}}
		\end{equation*}
		and
		\begin{equation*}
			\left\| |D\Gamma|(t,x) \right\|_{L^p(\Omega)} \le C_2(n,p) t^{-\frac{n}{2p'}-\frac{1}{2}},
		\end{equation*}
		where $p'$ is such that $\frac{1}{p}+\frac{1}{p'} = 1$ (with $p'=+\infty$ if $p=1$).
	\end{lem}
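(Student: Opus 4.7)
The plan is to establish both identities by direct computation on $\R^n$, reducing each integral to a standard Gaussian via a linear change of variable; the torus case then follows from the periodization formula $\Gamma_{\T^n}(t,x) = \sum_{k\in\Z^n}\Gamma(t, x+k)$ together with the Gaussian decay of $\Gamma$, and the resulting constants may differ but the powers of $t$ agree, which is what is needed in the applications of the appendix.

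For the first identity, I raise $\Gamma(t,x) = (4\pi t)^{-n/2} e^{-|x|^2/(4t)}$ to the $p$-th power and apply the substitution $y = \sqrt{p/(4t)}\, x$, whose Jacobian is $(4t/p)^{n/2}$. Under this change the integral becomes a multiple of the standard Gaussian $\int_{\R^n} e^{-|y|^2}\,\di y = \pi^{n/2}$, and after collecting the resulting powers of $p$, $t$ and $\pi$ one obtains precisely
\[
\|\Gamma(t,\cdot)\|_{L^p(\R^n)} = p^{-n/(2p)}\,(4\pi t)^{-n/(2p')},
\]
so that $C_1(n,p) := p^{-n/(2p)}(4\pi)^{-n/(2p')}$. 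The endpoints are consistent with this formula: for $p=1$ one recovers the total mass $1$ of the Gaussian density, and for $p=\infty$ the supremum is attained at $x=0$, giving $(4\pi t)^{-n/2}$.

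For the second identity, one starts from the pointwise relation $D\Gamma(t,x) = -\frac{x}{2t}\Gamma(t,x)$, hence $|D\Gamma(t,x)| = \frac{|x|}{2t}\Gamma(t,x)$, raises to the $p$-th power, and performs the same substitution $y = \sqrt{p/(4t)}\, x$. The $|x|^p$ factor becomes $(4t/p)^{p/2}|y|^p$, and the spatial integral reduces to $\int_{\R^n}|y|^p e^{-|y|^2}\,\di y$, a finite constant $K(n,p)$ (computable via spherical coordinates and the $\Gamma$ function, but whose explicit value is immaterial). Counting powers of $t$ coming from the Jacobian, from $(2t)^{-p}$, from $|x|^p$ and from $(4\pi t)^{-np/2}$ produces an additional factor of $t^{-p/2}$ compared with the previous case, and taking the $p$-th root yields $\||D\Gamma|(t,\cdot)\|_{L^p} = C_2(n,p)\, t^{-n/(2p')-1/2}$ for an appropriate $C_2(n,p)$. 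The endpoint $p=\infty$ is handled directly: the function $r \mapsto r\, e^{-r^2/(4t)}$ attains its maximum at $r^2 = 2t$, giving the expected $t$-power.

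No step is genuinely difficult; the only mild bookkeeping concerns the exponents of $t$, and the only technical point is the passage from $\R^n$ to $\T^n$, which is routine because for any $t>0$ the periodized kernel is pointwise equivalent (up to constants depending only on $n$) to the Euclidean one on the fundamental domain, and the full $\T^n$ norms reduce to Euclidean integrals against a rapidly decaying Gaussian.
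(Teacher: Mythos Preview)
Your argument is correct and is exactly the standard direct computation; the paper in fact omits the proof entirely, recalling the lemma as a ``very well-known result'' and only noting that the torus heat kernel has the same $t$-scaling as the Euclidean one. Your substitution $y=\sqrt{p/(4t)}\,x$ and the bookkeeping of $t$-powers are accurate, and your remark that on $\T^n$ the exact constants may differ while the powers of $t$ persist is precisely the caveat the paper makes informally.
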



\medskip

We now prove two Lemmas used in Section~\ref{nlturnlinf}.
	\begin{lem}	\label{app1}
		Let $z$ be the unique (mild) solution to the following linear forward PDE:
		\begin{equation*}
			\begin{cases}
				\partial_t z - \Delta z = {\rm div}(z F) + {\rm div}(G),& \text{ on } (0,T) \times \Omega \\
				z(0,\cdot)=z_0(\cdot),& \text{ on } \Omega. \\
			\end{cases}
		\end{equation*}
		Suppose that $F \in L^{\infty}([0,T] \times \Omega)$, $F \neq 0$, $G \in L^{\infty}([0,T];L^{n+1}(\Omega))$.
		Assume moreover that $z \in L^{\infty}([0,T];L^{n}(\Omega))$. Then, the following estimate holds:
		\begin{equation}
			\| z(t,\cdot) \|_{L^{\infty}(\Omega)} \le 2 \| z_0(\cdot) \|_{L^{\infty}(\Omega)} + C_F \left( \| G \|_{L^{\infty}((t-\tau,t); L^{n+1}(\Omega))} + \mathbbm{1}_{[\tau,T]}(t) \| z \|_{L^{\infty}((t-\tau,t);L^n(\Omega))} \right),
			\label{estapp1}
		\end{equation}
		for all $t \in [0,T]$, where $\tau=\frac{1}{16 \| F \|^2_{L^{\infty}([0,T] \times \Omega)} C_2(n)^2}$ and $C_F$ depends on $F,n,\tau$.
	\end{lem}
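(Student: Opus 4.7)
The plan is to use the mild solution representation via the heat semigroup. For any $0 \le t_0 \le t$, one can write
\[
z(t) = \Gamma(t-t_0) * z(t_0) - \int_{t_0}^{t} D\Gamma(t-s) * \bigl[z(s)F(s) + G(s)\bigr] \, ds,
\]
where the divergences $\text{div}(zF)$ and $\text{div}(G)$ have been integrated by parts onto the kernel. Each convolution will be bounded via Young's inequality with $L^p$ pairings dictated by Lemma~\ref{lemfix1}, and $t_0$ will be chosen according to whether $t$ is smaller or larger than $\tau$.

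First I would treat $t \in [0, \tau]$ with $t_0 = 0$. For the initial datum, $\|\Gamma(t) * z_0\|_\infty \le \|z_0\|_\infty$ (as $\|\Gamma(t)\|_{L^1} = 1$). For the source, I pair $G(s) \in L^{n+1}$ against $D\Gamma(t-s) \in L^{(n+1)/n}$; Lemma~\ref{lemfix1} gives $\|D\Gamma(t-s)\|_{L^{(n+1)/n}} \le C(t-s)^{-(2n+1)/(2(n+1))}$, whose singularity is integrable since the exponent is strictly below $1$. For the transport $z(s)F(s)$ I pair with $D\Gamma(t-s) \in L^1$, paying an $L^\infty$ norm of $zF$ and producing a $(t-s)^{-1/2}$ kernel. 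Taking the supremum of $\|z(t')\|_\infty$ over $t' \in [0, t]$, and recalling that $\tau$ has been chosen so that $2 C_2(n,1)\|F\|_\infty \tau^{1/2} = 1/2$, the self-referential $L^\infty$ term on the right can be absorbed into the left, yielding the claimed bound with the $2\|z_0\|_\infty + C\|G\|$ part.

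For $t > \tau$, I instead pick $t_0 = t - \tau > 0$, at which only $L^n$ information on $z(t_0)$ is available. I pair $z(t_0) \in L^n$ against $\Gamma(\tau) \in L^{n/(n-1)}$, obtaining $\|\Gamma(\tau) * z(t_0)\|_\infty \le C_1(n, n/(n-1))\tau^{-1/2}\|z(t_0)\|_{L^n}$ (this is the origin of the $\tau^{-1/2}$ dependence of $C_F$). The same absorption argument on the interval $(t-\tau, t)$ then gives $\sup_{s \in (t-\tau, t)}\|z(s)\|_\infty \le 2C_1 \tau^{-1/2}\|z(t-\tau)\|_{L^n} + C\|G\|_{L^\infty((t-\tau,t); L^{n+1})}$, and bounding $\|z(t-\tau)\|_{L^n}$ by $\|z\|_{L^\infty((t-\tau,t);L^n)}$ via time-continuity of the mild solution recovers exactly the stated inequality.

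The main obstacle is the transport term $\text{div}(zF)$. A direct Young estimate using only $z \in L^\infty(L^n)$ would require $D\Gamma \in L^{n/(n-1)}$, producing a non-integrable $(t-s)^{-1}$ singularity. The resolution is to first exploit the smoothing of $\Gamma(\tau)$ to upgrade $z$ from $L^n$ to $L^\infty$ on the short interval of length $\tau$, and then to reuse that $L^\infty$ bound in the $zF$-convolution; the quadratic dependence of $\tau$ on $\|F\|_\infty^{-1}$ is designed precisely so that the bootstrap-absorption closes with a self-referential coefficient equal to $1/2 < 1$. A minor technical point is ensuring a priori that the $L^\infty$ norm over the relevant time slab is finite so that the absorption is legitimate, which can be justified either by a standard approximation argument or, more elegantly, by invoking a singular-kernel Grönwall (Henry's) inequality in place of the naive absorption.
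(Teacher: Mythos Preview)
Your treatment of the case $t\in[0,\tau]$ coincides with the paper's. For $t>\tau$ the two arguments diverge: you restart the Duhamel formula from $t_0=t-\tau$ with $z(t_0)\in L^n$ as datum, while the paper multiplies by a time cutoff $\eta$ vanishing at $t_0$, so that $\tilde z=\eta z$ has zero initial data and picks up an additional source $\eta' z$; it is this source, paired with $\Gamma\in L^{n/(n-1)}$, that brings in the $L^n$ norm of $z$ through an integrable $(t-s)^{-1/2}$ kernel.

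Your absorption step for $t>\tau$ does not close as written. For $t'\in(t_0,t)$ the initial layer contributes $\|\Gamma(t'-t_0)*z(t_0)\|_{\infty}\le C_1(t'-t_0)^{-1/2}\|z(t_0)\|_{L^n}$, which diverges as $t'\downarrow t_0$; taking the supremum over $t'\in(t-\tau,t)$ therefore yields $+\infty$ on the right, and indeed the asserted bound $\sup_{s\in(t-\tau,t)}\|z(s)\|_{\infty}\le 2C_1\tau^{-1/2}\|z(t-\tau)\|_{L^n}+\cdots$ is false in general (the left side need not be controlled by $L^n$ data at a single earlier time). The paper's cutoff is exactly what removes this obstruction: since $\tilde z(t_0)=0$ there is no singular initial layer, and the naive absorption on $[t_0,t_0+\tau]$ closes cleanly. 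Your route can be salvaged---either by Henry's singular Gr\"onwall inequality, or equivalently by running the absorption on the weighted quantity $\sup_{t'\in(t_0,t)}(t'-t_0)^{1/2}\|z(t')\|_{\infty}$ (the beta integral $\int_{t_0}^{t'}(t'-s)^{-1/2}(s-t_0)^{-1/2}\,ds=\pi$ then replaces $2\tau^{1/2}$ and one checks $\pi C_2(n)\|F\|_{\infty}\tau^{1/2}=\pi/4<1$)---but this is doing substantially more than ``ensuring a priori finiteness'': it is the device that tames the blow-up of the heat-regularized $L^n$ datum, and without it the argument is incomplete.
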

	\begin{proof}
		First of all, let $C_1(n):=C_1(n,\frac{n}{n-1})$, $C_2(n):=C_2(n,1)$ and $\tilde C_2(n):=C_2(n,\frac{n+1}{n})$, where $C_1(n,p)$ and $C_2(n,p)$ are as in Lemma~\ref{lemfix1}. We consider the PDE as a heat equation with an external source; therefore, we use Duhamel representation formula to get the following expression
		\begin{equation*}
			\begin{split}
			z(t,x)&=\int_{\Omega} \Gamma(t,x-y)z_0(y) \di y + \int_0^t \int_{\Omega} \Gamma(t-u,x-y) \left[ \text{div}(zF) + \text{div}(G) \right](u,y) \di y \di u \\
			&=\int_{\Omega} \Gamma(t,x-y)z_0(y) \di y + \int_0^t \int_{\Omega} \langle D\Gamma(t-u,x-y), -z(u,y)F(u,y) - G(u,y) \rangle \di y \di u.
			\end{split}
		\end{equation*}
		Thus, by means of Hölder's inequality for convolutions,
		\begin{multline*}
			\| z(t,\cdot) \|_{L^{\infty}(\Omega)} \le \| \Gamma(t-u,x-\cdot) \|_{L^1(\Omega)} \cdot \| z_0 \|_{L^{\infty}(\Omega)} +\\ + \int_0^t \| D\Gamma(t-u,x-\cdot) \|_{L^1(\Omega)} \| z(u) \|_{L^{\infty}(\Omega)} \| F \|_{L^{\infty}([0,T] \times \Omega)} \di u
			+\\ + \int_0^t \| D\Gamma(t-u,x-\cdot) \|_{L^{\frac{n+1}{n}}(\Omega)} \| G(u) \|_{L^{n+1}(\Omega)} \di u \\
			\le \| z_0 \|_{L^{\infty}(\Omega)} +  C_2(n) \| F \|_{L^{\infty}([0,T] \times \Omega)}  \int_0^t (t-u)^{-\frac{1}{2}} \underset{u \in [0,\tau]}{\sup} \| z(u) \|_{L^{\infty}(\Omega)} \di u
			+ \\ +\tilde C_2(n) \int_0^t (t-u)^{-1+\frac{1}{2n+2}} \| G(u) \|_{L^{n+1}(\Omega)} \di u,
		\end{multline*}
		where $\tau$ is to be fixed and we consider only $t \in [0,\tau]$.
		After performing the change of variables $r:=t-u$, we get
		\begin{equation*}
			\begin{split}
			&\le \| z_0 \|_{L^{\infty}(\Omega)} + C_2(n)  \| F \|_{L^{\infty}([0,T] \times \Omega)}  \underset{u \in [0,\tau]}{\sup} \| z(u) \|_{L^{\infty}(\Omega)} \int_0^t r^{-\frac{1}{2}} \di r \\
			&+\tilde C_2(n) \int_0^t r^{-1+\frac{1}{2n+2}} \| G(t-r) \|_{L^{n+1}(\Omega)} \di r \\
			&\le \| z_0 \|_{L^{\infty}(\Omega)} + C_2(n) \| F \|_{L^{\infty}([0,T] \times \Omega)}  \underset{u \in [0,\tau]}{\sup} \| z(u) \|_{L^{\infty}(\Omega)} 2\tau^{\frac{1}{2}} + \\
			&+ \tilde C_2(n) \int_0^{\tau} r^{-1+\frac{1}{2n+2}} \| G(t-r) \|_{L^{n+1}(\Omega)} \di r \\
			&\le \| z_0 \|_{L^{\infty}(\Omega)} + C_2(n) \| F \|_{L^{\infty}([0,T] \times \Omega)} \underset{u \in [0,\tau]}{\sup} \| z(u) \|_{L^{\infty}(\Omega)} 2\tau^{\frac{1}{2}} + \\
			&+(2n+2) \tilde C_2 (n)\tau^{\frac{1}{2n+2}} \| G \|_{L^{\infty}((t-\tau,t);L^{n+1}(\Omega))}.
			\end{split}
		\end{equation*}
		Choose $\tau=\frac{1}{16 C_2(n)^2 \| F \|^2_{L^{\infty}([0,T] \times \Omega)}}$ and take the supremum over $[0,\tau]$ in the left hand side. Therefore,
		\begin{equation*}
			\underset{t \in [0,\tau]}{\sup} \| z(t,\cdot) \|_{L^{\infty}(\Omega)} \le \| z_0 \|_{L^{\infty}(\Omega)} + \frac{1}{2} \underset{u \in [0,\tau]}{\sup} \| z(u) \|_{L^{\infty}(\Omega)} +(2n+2) \tilde C_2(n) \tau^{\frac{1}{2n+2}} \| G \|_{L^{\infty}((t-\tau,t);L^{n+1}(\Omega))},
		\end{equation*}
		from which
		\begin{equation*}
			\underset{t \in [0,\tau]}{\sup} \| z(t,\cdot) \|_{L^{\infty}(\Omega)} \le 2 \| z_0 \|_{L^{\infty}(\Omega)} + 2 (2n+2) \tilde C_2(n) \tau^{\frac{1}{2n+2}} \| G \|_{L^{\infty}((t-\tau,t);L^{n+1}(\Omega))}.
		\end{equation*}
		which gives the first part of the estimate~\eqref{estapp1}. \\
		For larger times, we have to use a cut-off method. Fix $s \in (0,T-1)$ and let
		\begin{equation*}
			\eta(t):=
			\begin{cases}
				0,& \text{ if } t \in [0,s] \\
				t-s,& \text{ if } t \in [s,s+1] \\
				1,& \text{ if } t \in [s+1,T] \\ 
			\end{cases}
		\end{equation*}
		and let $\tilde{z}(t,x)=\eta(t) z(t,x)$. We have that
		\begin{equation*}
			\begin{split}
				\partial_t \tilde{z}= \eta \partial_t z + \eta' z &= \eta (\Delta z + \text{div}(zF) + \text{div}(G)) + \eta' z = \\
				&= \Delta \tilde{z} + \text{div} (\tilde{z}F) + \eta \text{ div}(G) +  \eta' z,
			\end{split}
		\end{equation*}
		with initial condition $\tilde{z}(0,\cdot)=\eta(0) z(0,\cdot)=0$.
		Hence, $\tilde{z}$ satisfies (a.e.) the following Cauchy problem:
		\begin{equation*}
			\begin{cases}
				\partial_t \tilde{z} - \Delta \tilde{z} = \text{div}(\tilde{z}F) + \eta \text{ div}(G) + \eta' z,& \text{ on } (0,T) \times \Omega \\
				\tilde{z}(0,\cdot)=0,& \text{ on } \Omega
			\end{cases}
		\end{equation*}
		Using Duhamel representation formula, we get
		\begin{equation*}
			\begin{split}
				\tilde{z}(t,x)&=\int_0^t \int_{\Omega} \Gamma(t-u,x-y) \left( \text{div}(\tilde{z}F)+ \eta \text{ div}(G) + \eta' z \right) (u,y) \di y \di u \\
				&=\int_0^t \int_{\Omega} \langle D\Gamma(t-u,x-y),-\tilde{z}F(u,y) - \eta(u) G(u,y) \rangle + \Gamma(t-u,x-y) \eta' (u,y) z(u,y) \di y \di u \\
				&=\int_s^t \int_{\Omega} \langle D\Gamma(t-u,x-y),-\tilde{z}F(u,y) - \eta(u) G(u,y) \rangle + \Gamma(t-u,x-y) \eta' (u,y) z(u,y) \di y \di u,
			\end{split}
		\end{equation*}
		where the last equality follows from the fact that $\eta$ (and consequently $\tilde{z}$) has support, with respect to $t$, on $[s,T]$. Consider $s \le t \le s+\tau$, where $\tau$ is defined as before. Then, again by Hölder's inequality for convolutions,
		\begin{equation*}
			\begin{split}
			\| \tilde{z}(t,\cdot) \|_{L^{\infty}(\Omega)} &\le \int_s^t \| D\Gamma(t-u,x-\cdot) \|_{L^1(\Omega)} \| F \|_{L^{\infty}([0,T] \times \Omega)} \| \tilde{z}(u) \|_{L^{\infty}(\Omega)} \di u \\
			&+\int_s^t \| D\Gamma(t-u,x-\cdot) \|_{L^{\frac{n+1}{n}}(\Omega)} \| G(u) \|_{L^{n+1}(\Omega)} + \| \Gamma(t-u,x-\cdot) \|_{L^{\frac{n}{n-1}}(\Omega)} \| z(u) \|_{L^{n}(\Omega)} \di u \\
			&\le C_2(n) \| F \|_{L^{\infty}([0,T] \times \Omega)} \underset{u \in [s,s+\tau]}{\sup} \| \tilde{z}(u) \|_{L^{\infty}(\Omega)} \int_s^t (t-u)^{-\frac{1}{2}} \di u \\
			&+ \tilde C_2(n) \int_s^t (t-u)^{-1+\frac{1}{2n+2}} \| G(u) \|_{L^{n+1}(\Omega)} \di u + C_1(n) \int_s^t (t-u)^{-\frac{1}{2}} \| z(u) \|_{L^{n}(\Omega)} \di u \\
			&\overset{r:=t-u}{\le} 2 C_2(n)  \| F \|_{L^{\infty}([0,T] \times \Omega)} (t-s)^{\frac{1}{2}}  \underset{u \in [s,s+\tau]}{\sup} \| \tilde{z}(u) \|_{L^{\infty}(\Omega)}  \\ 
			&+ \tilde C_2(n) \int_0^{t-s} r^{-1+\frac{1}{2n+2}} \| G(t-r) \|_{L^{n+1}(\Omega)} \di r + C_1(n) \int_0^{t-s} r^{-\frac{1}{2}} \| z(t-r) \|_{L^{n}(\Omega)} \di r \\
			&\le 2 C_2(n) \| F \|_{L^{\infty}([0,T] \times \Omega)} \tau^{\frac{1}{2}}  \underset{u \in [s,s+\tau]}{\sup} \| \tilde{z}(u) \|_{L^{\infty}(\Omega)} \\ 
			&+\tilde C_2(n) \int_0^{\tau} r^{-1+\frac{1}{2n+2}} \| G(t-r) \|_{L^{n+1}(\Omega)} \di r + C_1(n) \int_0^{\tau} r^{-\frac{1}{2}} \| z(t-r) \|_{L^{n}(\Omega)} \di r \\
			&\le 2 C_2(n) \| F \|_{L^{\infty}([0,T] \times \Omega)} \tau^{\frac{1}{2}}  \underset{u \in [s,s+\tau]}{\sup} \| \tilde{z}(u) \|_{L^{\infty}(\Omega)} \\ 
			&+ (2n+2)\tilde C_2(n) \tau^{\frac{1}{2n+2}} \| G \|_{L^{\infty}((t-\tau,t);L^{n+1}(\Omega))} + 2 \tau^{\frac{1}{2}} C_1(n)  \| z \|_{L^{\infty}((t-\tau,t);L^n(\Omega))} \\
			&\le \frac{1}{2} \underset{u \in [s,s+\tau]}{\sup} \| \tilde{z}(u) \|_{L^{\infty}(\Omega)} + (2n+2) \tilde C_2(n) \tau^{\frac{1}{2n+2}} \| G \|_{L^{\infty}((t-\tau,t);L^{n+1}(\Omega))} \\
			&+2 \tau^{\frac{1}{2}} C_1(n) \| z \|_{L^{\infty}((t-\tau,t);L^n(\Omega))}.
			\end{split}
		\end{equation*}
		Taking the supremum over $[s,s+\tau]$, we get
		\begin{equation*}
			\begin{split}
				\underset{u \in [s,s+\tau]}{\sup} \| \tilde{z}(u,\cdot) \|_{L^{\infty}(\Omega)} &\le 2(2n+2)\tilde C_2(n)  \tau^{\frac{1}{2n+2}} \| G \|_{L^{\infty}((t-\tau,t);L^{n+1}(\Omega))} \\
				&+4 \tau^{\frac{1}{2}} C_1(n) \| z \|_{L^{\infty}((t-\tau,t);L^n(\Omega))}.
			\end{split}
		\end{equation*}
		Recalling the definition of $\tilde{z}$, we get that
		\begin{multline*}
			\| z(s+\tau,\cdot) \|_{L^{\infty}(\Omega)} = \\ = \frac{1}{\tau} \| \tilde{z}(s+\tau,\cdot) \|_{L^{\infty}(\Omega)} \le \frac{2(2n+2) \tilde C_2(n) \tau^{\frac{1}{2n+2}} \| G \|_{L^{\infty}((t-\tau,t);L^{n+1}(\Omega))}}{\tau} \\ + \frac{4 \tau^{\frac{1}{2}} C_1(n)  \| z \|_{L^{\infty}((t-\tau,t);L^n(\Omega))}}{\tau},
		\end{multline*}
		for all $0 < s < T-1$.
		This can be extended also for $s > T-1$ (but $s < T-\tau$), by getting a different cut-off function
		\begin{equation*}
			\eta_2(t):=
			\begin{cases}
				0,& \text{ if } t \in [0,s] \\
				t-s,& \text{ if } t \in [s,T] \\
			\end{cases}
		\end{equation*}
		and again $\tilde{z}(t,x)=\eta_2(t)z(t,x)$. The computations are the same as before. Gathering all the estimates, we finally get~\eqref{estapp1}.
	\end{proof}
	\begin{rmk} \label{roughrmk}
		We notice that in the proof of~\eqref{estapp1} the last term arises from the cut-off argument. This means that the estimate on $[0,\tau]$ reads as follows:
		\begin{equation} \label{roughbound1}
			\| z(t,\cdot) \|_{L^{\infty}(\Omega)} \le 2 \| z_0(\cdot) \|_{L^{\infty}(\Omega)} + C_F \| G \|_{L^{\infty}((0,\tau);L^{n+1}(\Omega))}.
		\end{equation}
	A rougher bound than~\eqref{estapp1} can be obtained by iterating~\eqref{roughbound1} $m$ times, where $m$ is the integer part of $\frac{T}{\tau}$. In this way,
	\begin{equation} \label{roughbound2}
		\begin{split}
		\| z(t,\cdot) \|_{L^{\infty}(\Omega)} &\le 2^m \| z_0(\cdot) \|_{L^{\infty}(\Omega)} + C_F \left( \sum_{k=0}^m 2^k \right) \| G \|_{L^{\infty}((0,\tau);L^{n+1}(\Omega))} \\
		& = 2^m \| z_0(\cdot) \|_{L^{\infty}(\Omega)} + C_F (2^{m+1}-1) \| G \|_{L^{\infty}((0,\tau);L^{n+1}(\Omega))},
		\end{split}
	\end{equation}
	for all $t \in [0,T]$.
	\end{rmk}
	The second estimate is about backward equations with terminal condition.
	\begin{lem} \label{app2}
		Let $z$ be the unique (mild) solution to the following linear backward PDE:
		\begin{equation}
			\begin{cases}
				-\partial_t z - \Delta z = \langle Dz,H \rangle + g, & \text{ on } (0,T) \times \Omega \\
				z(T,\cdot)=z_T(\cdot),& \text{ on } \Omega. \\
			\end{cases}
			\label{app2pde}
		\end{equation}
		Suppose that $H,g \in L^{\infty}([0,T] \times \Omega)$. Assume moreover that $z(t,\cdot) \in L^{\infty}(\Omega)$ for all $t \in [0,T]$. Then,
		\begin{equation}
			\| Dz(t,\cdot) \|_{L^{\infty}(\Omega)} \le 2 \| Dz_T(\cdot) \|_{L^{\infty}(\Omega)} + C_H \left( \| g \|_{L^{\infty}([0,T] \times \Omega)} + \mathbbm{1}_{[\tau,T]}(t) \| z \|_{L^{\infty}([0,T] \times \Omega)} \right),
			\label{estlemDz}
		\end{equation}
		for all $t \in [0,T]$.
	\end{lem}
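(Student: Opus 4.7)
The plan is to mirror the argument of Lemma~\ref{app1}, adapted to the backward equation and to a bound on the gradient $Dz$ rather than on $z$ itself. Since $z$ solves a backward problem, I use the forward heat kernel $\Gamma$ and Duhamel's formula, integrating forward in time from $t$ to $T$:
\[
z(t,x) = \int_{\Omega} \Gamma(T-t,x-y)\, z_T(y)\, \di y + \int_t^T \!\!\int_{\Omega} \Gamma(s-t,x-y) \bigl[ \langle Dz(s,y), H(s,y) \rangle + g(s,y) \bigr]\, \di y\, \di s.
\]
A naive differentiation in $x$ of the first term produces a singular factor $\|D\Gamma(T-t,\cdot)\|_{L^1} \sim (T-t)^{-1/2}$. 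To avoid this, I integrate by parts in $y$ to transfer the derivative onto $z_T$, producing the harmless convolution $\Gamma(T-t,\cdot) * Dz_T$, bounded in $L^\infty$ by $\|Dz_T\|_{L^\infty}$.

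For $t$ close to the terminal time, namely $t \in [T-\tau,T]$ with $\tau := (16 C_2(n)^2 \|H\|^2_{L^\infty})^{-1}$ as in Lemma~\ref{app1}, I apply Young's convolution inequality to the remaining terms, using $\|D\Gamma(r,\cdot)\|_{L^1} = C_2(n) r^{-1/2}$. This yields
\[
\sup_{t \in [T-\tau,T]} \|Dz(t,\cdot)\|_{L^\infty} \le \|Dz_T\|_{L^\infty} + \tfrac{1}{2}\! \sup_{t \in [T-\tau,T]} \|Dz(t,\cdot)\|_{L^\infty} + 2C_2(n)\sqrt{\tau}\,\|g\|_{L^\infty},
\]
and absorbing the middle term gives \eqref{estlemDz} on $[T-\tau,T]$ without the $\|z\|_{L^\infty}$ contribution.

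To cover the remaining times $t \in [0,T-\tau]$, I introduce, for each $s \in [\tau,T]$, a Lipschitz cut-off $\eta(t)$ equal to $1$ on $[0,s-\tau]$, linearly decreasing on $[s-\tau,s]$ (so $|\eta'| \le 1/\tau$), and identically $0$ on $[s,T]$. Setting $\tilde z := \eta z$, a short computation shows that $\tilde z$ satisfies the same backward equation with modified source $\eta g - \eta' z$ and \emph{zero} terminal datum. Applying the same Duhamel--Young scheme to $\tilde z$ on the interval $[s-\tau,s]$, which has length $\tau$ and contains the support of $\eta'$, the absence of a terminal condition removes $\|Dz_T\|_{L^\infty}$, while a new term $\|\eta' z\|_{L^\infty} \le \|z\|_{L^\infty}/\tau$ appears. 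After absorbing the $\langle D\tilde z, H\rangle$ contribution as before, one obtains
\[
\sup_{t \in [s-\tau,s]} \|D\tilde z(t,\cdot)\|_{L^\infty} \le C_H \bigl( \|g\|_{L^\infty} + \|z\|_{L^\infty} \bigr),
\]
with $C_H$ depending only on $\|H\|_{L^\infty}, n, \tau$. Since $\eta(s-\tau)=1$, at that time $D\tilde z = Dz$; varying $s \in [\tau,T]$ then covers all $t \in [0,T-\tau]$, and gluing the two regimes yields \eqref{estlemDz}.

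The main technical point is the integration by parts on the terminal-data term, which is immediate on $\T^n$ and, on $\R^n$, follows from the integrability of $z_T$ together with the Gaussian decay of $\Gamma$. Everything else is careful bookkeeping of constants; crucially, all constants depend only on $\|H\|_{L^\infty}$ and $n$, and in particular are independent of $T$.
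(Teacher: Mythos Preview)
Your proof is correct and follows essentially the same approach as the paper: Duhamel representation with the spatial derivative transferred to $z_T$ on the terminal-data term, a short-time absorption argument on $[T-\tau,T]$ with the same choice of $\tau$, and a time cut-off to reach the remaining interval. The only cosmetic difference is that the paper lets its cut-off $\omega$ decay over an interval of length $1$ (so $|\omega'|\le 1$) and then divides by $\omega(T-s-\tau)=\tau$ at the end, whereas you let $\eta$ decay over an interval of length $\tau$ (so $|\eta'|\le 1/\tau$) and evaluate where $\eta=1$; the resulting constants agree, and your variant even spares the paper's extra cut-off $\omega_2$ near $t=0$.
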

	\begin{proof}
		Let $C_2(n):=C_2(n,1)$, where $C_2(n,p)$ is as in Lemma~\ref{lemfix1}. We consider again, as in Lemma~\ref{app1}, the PDE as a heat equation with an external source. We recast the equation in~\eqref{app2pde} as a forward PDE by introducing the following change of (time) variable:
		$$\tilde z(t,x):=z(T-t,x).$$
		Then, we determine the equation satisfied by $\bar{z}$. 
		\begin{equation*}
			\begin{aligned}
				\partial_t \tilde z(t,x) &= -\partial_s z(T-t,x) \\
				&= \Delta z(T-t,x) + \langle Dz(T-t,x),H(T-t,x) \rangle + g(T-t,x) \\
				&=\Delta \tilde z (t,x) + \langle D\tilde z(t,x),H(T-t,x) \rangle + g(T-t,x),
			\end{aligned}
		\end{equation*}
		with initial condition $\tilde z(0,\cdot)=z_T(\cdot)$. Therefore, by using Duhamel representation formula, we get
		\begin{equation*}
			\tilde z(t,x)=\int_{\Omega} \Gamma(t,x-y) z_T(y) \di y + \int_0^t \int_{\Omega} \Gamma(t-u,x-y) \left[ \langle D\tilde z(u,y),H(T-u,y) \rangle + g(T-u,y) \right] \di y \di u,
		\end{equation*}
		hence, recalling the definition of $\tilde z$, 
		\begin{multline*}
			z(t,x)=\int_{\Omega} \Gamma(T-t,x-y)z_T(y) \di y \\ + \int_0^{T-t} \int_{\Omega} \Gamma(T-t-u,x-y) \left[ \langle Dz(T-u,y),H(T-u,y) \rangle + g(T-u,y) \right] \di y \di u.
		\end{multline*}
		Now, let us compute the $i$-th spatial derivative of $z$, exploiting the properties of the convolution with respect to differentiation. We get
		\begin{multline*}
			\partial_{x_i} z(t,x) = \int_{\Omega} \Gamma(T-t,x-y) \partial_{y_i} z_T(y) \di y + \\ + \int_0^{T-t} \int_{\Omega} \partial_{x_i} \Gamma(T-t-u,x-y) \langle Dz(T-u,y),H(T-u,y) \rangle \di y \di u +\\
			+\int_0^{T-t} \int_{\Omega} \partial_{x_i} \Gamma(T-t-u,x-y) g(T-u,y) \di y \di u,
		\end{multline*}
		from which, by Young's inequality for convolutions, we get
		\begin{equation*}
			\begin{split}
			\| \partial_{x_i} z(t,\cdot) \|_{L^{\infty}(\Omega)} &\le \| \Gamma(T-t,x-\cdot)\|_{L^1(\Omega)} \| Dz_T \|_{L^{\infty}(\Omega)} \\ 
			&+ \int_0^{T-t} \| D\Gamma(T-t-u,x-\cdot) \|_{L^1(\Omega)} \| Dz(T-u) \|_{L^{\infty}(\Omega)} \| H(T-u) \|_{L^{\infty}(\Omega)} \di u \\
			&+ \int_0^{T-t} \| D\Gamma(T-t-u,x-\cdot) \|_{L^1(\Omega)} \| g(T-u) \|_{L^{\infty}(\Omega)} \di u \\
			&= \| Dz_T \|_{L^{\infty}(\Omega)} + C_2(n) \int_{0}^{T-t}  (T-t-u)^{-\frac{1}{2}} \| Dz(T-u) \|_{L^{\infty}(\Omega)} \| H(T-u) \|_{L^{\infty}(\Omega)} \di u \\
			&+ C_2(n) \int_{0}^{T-t}  (T-t-u)^{-\frac{1}{2}} \| g(T-u) \|_{L^{\infty}} \di u \\
			&\overset{u':=T-u}{=} \| Dz_T \|_{L^{\infty}(\Omega)} + \\
			&+C_2(n) \int_t^T (u'-t)^{-\frac{1}{2}} \left( \| Dz(u') \|_{L^{\infty}(\Omega)} \| H(u') \|_{L^{\infty}(\Omega)} + \| g(u') \|_{L^{\infty}(\Omega)} \right) \di u'.
			\end{split}
		\end{equation*}
		Let $T-\tau \le t \le T$, with $\tau$ to be fixed later. From the previous inequality, with the change of variables $r:=u'-t$, we deduce
		\begin{equation*}
			\begin{split}
			&\le \| Dz_T \|_{L^{\infty}(\Omega)} + C_2(n) \int_t^T (u'-t)^{-\frac{1}{2}} \underset{u' \in [T-\tau,T]}{\sup} \| Dz(u') \|_{L^{\infty}(\Omega)} \| H(u') \|_{L^{\infty}(\Omega)} \di u' \\
			&+C_2(n) \int_0^{T-t} r^{-\frac{1}{2}} \| g(t+r) \|_{L^{\infty}(\Omega)} \di r \\
			&\le \| Dz_T \|_{L^{\infty}(\Omega)} + C_2(n) \| H \|_{L^{\infty}([0,T] \times \Omega)} \underset{u' \in [T-\tau,T]}{\sup} \| Dz(u') \|_{L^{\infty}(\Omega)} \int_0^{\tau} r^{-\frac{1}{2}} \di u' \\
			&+C_2(n) \| g \|_{L^{\infty}([0,T] \times \Omega)} \int_0^{\tau} r^{-\frac{1}{2}} \di r  \\
			&\le \| Dz_T \|_{L^{\infty}(\Omega)} + 2 \tau^{\frac{1}{2}} C_2(n) \| H \|_{L^{\infty}([0,T] \times \Omega)} \underset{u' \in [T-\tau,T]}{\sup} \| Dz(u') \|_{L^{\infty}} + 2 \tau^{\frac{1}{2}} C_2(n) \| g \|_{L^{\infty}([0,T] \times \Omega)}.
			\end{split}
		\end{equation*}
		Choose $\tau=\frac{1}{16 \| H \|^2_{L^{\infty}([0,T] \times \Omega)} C_2^2(n)}$. Then, taking the supremum on both sides on $[T-\tau,T]$ and over all spatial derivatives of $z$, we get
		$$\underset{t \in [T-\tau,T]}{\sup} \| Dz(t) \|_{L^{\infty}(\Omega)} \le 2 \| Dz_T(\cdot) \|_{L^{\infty}(\Omega)} + C_H \| g \|_{L^{\infty}([0,T] \times \Omega)},$$
		for some $C_H$ depending on $\| H \|_{L^{\infty}([0,T] \times \Omega)},n$. \\
		To extend the estimate for further times, we have to apply a cut-off method. Let $s > 0$ and $s < T-1$ and define
		\begin{equation*}
			\omega(t)=
			\begin{cases}
				1,& \text{ if } t \in [0,T-s-1] \\
				T-t-s,& \text{ if } t \in [T-s-1,T-s] \\
				0,& \text{ if } t \in [T-s,T] \\
			\end{cases}
		\end{equation*}
		The support of $\omega(T-t)$ is $[s,T]$. Define $\bar{z}(t,x)=\omega(t)z(t,x)$ and we compute the equation satisfied (a.e.) by $\bar{z}$, which is:
		\begin{equation*}
			\begin{aligned}
				-\partial_t \bar{z} &= -\omega \partial_t z - \omega' z \\
				&= \omega \Delta z + \omega \langle Dz,H \rangle + \omega g- \omega' z \\
				&= \Delta \bar{z} + \langle D\bar{z},H \rangle + \omega g - \omega' z,
			\end{aligned}
		\end{equation*}
		with boundary condition $\bar{z}(T,\cdot)=0$. Duhamel representation formula, together with the previous change of time variable and the property of the support of $\omega$, gives:
		\begin{equation*}
			\begin{split}
			\bar{z}(t,x) &= \int_s^{T-t} \int_{\Omega} \Gamma(T-t-u,x-y) \langle D\bar{z}(T-u,y),H(T-u,y) \rangle \di y \di u \\
			&+ \int_s^{T-t} \int_{\Omega} \Gamma(T-t-u,x-y) \left[ \omega(T-u)g(T-u,y) - \omega'(T-u)z(T-u,y) \right] \di y \di u = \\
			&\overset{u':=T-u}{=}\int_t^{T-s} \int_{\Omega} \Gamma(u'-t,x-y) \left[ \langle D\bar{z}(u',y),H(u',y) \rangle + \omega(u')g(u',y) - \omega'(u')z(u',y) \right] \di y \di u'.
			\end{split}
		\end{equation*}
		Computing the $i$-th spatial derivative and exploiting the properties of the convolution with respect to differentiation yields:
		$$\partial_{x_i} \bar{z}(t,x) = \int_t^{T-s} \int_{\Omega} \partial_{x_i} \Gamma(u-t,x-y) \left[ \langle D\bar{z}(u,y),H(u,y) \rangle + \omega(u)g(u,y) - \omega'(u)z(u,y) \right] \di y \di u.$$
		Take now $t \in [T-s-\tau,T-s]$, where $\tau$ is the same as before. Taking the sup norm and using the Young's inequality for convolutions, we obtain
		\begin{equation*}
			\begin{split}
			\| \partial_{x_i} \bar{z}(t,\cdot) \|_{L^{\infty}(\Omega)} &\le \int_t^{T-s} \| D\Gamma(u-t,x-\cdot) \|_{L^1(\Omega)} \left[ \| D\bar{z}(u) \|_{L^{\infty}(\Omega)} \| H \|_{L^{\infty}([0,T] \times \Omega)} + \| g(u) \|_{L^{\infty}(\Omega)} \right] \di u \\
			&+\int_t^{T-s} \| D\Gamma(u-t,x-\cdot) \|_{L^1(\Omega)} \| z(u) \|_{L^{\infty}(\Omega)} \di u \\
			&\overset{r:=u-t}{=} C_2(n) \underset{u \in [T-s-\tau,T-s]}{\sup} \| D\bar{z}(u) \|_{L^{\infty}(\Omega)} \| H \|_{L^{\infty}([0,T] \times \Omega)} \int_0^{T-s-t} r^{-\frac{1}{2}} \di r \\
			&+C_2(n) \int_0^{T-s-t} r^{-\frac{1}{2}} \| z(t+r) \|_{L^{\infty}(\Omega)} \di r + C_2(n) \int_0^{T-s-t} r^{-\frac{1}{2}} \| g(t+r) \|_{L^{\infty}(\Omega)} \di r \\
			&\le \frac{1}{2} \underset{u \in [T-s-\tau,T-s]}{\sup} \| D\bar{z}(u) \|_{L^{\infty}(\Omega)} + C_2(n) \| z \|_{L^{\infty}([0,T] \times \Omega)} \int_0^{\tau} r^{-\frac{1}{2}} \di r \\
			&+ C_2(n) \| g \|_{L^{\infty}([0,T] \times \Omega)} \int_0^{\tau} r^{-\frac{1}{2}} \di r \\
			&= \frac{1}{2} \underset{u \in [T-s-\tau,T-s]}{\sup} \| D\bar{z}(u) \|_{L^{\infty}(\Omega)} +2 \tau^{\frac{1}{2}} C_2(n) \left( \| z \|_{L^{\infty}([0,T] \times \Omega)} + \| g \|_{L^{\infty}([0,T] \times \Omega)} \right).
			\end{split}
		\end{equation*}
		Taking the supremum over all derivatives and over $[T-s-\tau,T-s]$, we get
		$$\underset{u \in [T-s-\tau,T-s]}{\sup} \| D\bar{z}(u,\cdot) \|_{L^{\infty}(\Omega)} \le C_H \left( \| g \|_{L^{\infty}([0,T] \times \Omega)} + \| z \|_{L^{\infty}([0,T] \times \Omega)} \right).$$
		Recalling the definition of $\bar{z}$, we have that
		$$\| Dz(T-s-\tau,\cdot) \|_{L^{\infty}(\Omega)} = \frac{1}{\tau} \| D\bar{z}(T-s-\tau,\cdot) \|_{L^{\infty}(\Omega)} \le \frac{C_H}{\tau} \left( \| g \|_{L^{\infty}([0,T] \times \Omega)} + \| z \|_{L^{\infty}([0,T] \times \Omega)} \right),$$
		for all $0 < s < T-1$. The same reasoning holds for $s \in [T-1,T-\tau]$, if one chooses the cut-off function
		\begin{equation*}
			\omega_2(t)=
			\begin{cases}
				T-t-s,& \text{ if } t \in [0,T-s] \\
				0,& \text{ if } t \in [T-s,T]. \\
			\end{cases}
		\end{equation*}
		Gathering all the estimates we obtained, we finally get~\eqref{estlemDz}.
	\end{proof}
	\bibliography{turnpike}

@article {LLcras,
    AUTHOR = {Lasry, Jean-Michel and Lions, Pierre-Louis},
     TITLE = {Jeux \`a{} champ moyen. {II}. {H}orizon fini et contr\^ole
              optimal},
   JOURNAL = {C. R. Math. Acad. Sci. Paris},
  FJOURNAL = {Comptes Rendus Math\'ematique. Acad\'emie des Sciences. Paris},
    VOLUME = {343},
      YEAR = {2006},
    NUMBER = {10},
     PAGES = {679--684},
      ISSN = {1631-073X,1778-3569},
   MRCLASS = {91A15 (35K50 35K55 49J55 91A10 91A13 93E20)},
  MRNUMBER = {2271747},
MRREVIEWER = {Sa\"id\ Hamadene},
       DOI = {10.1016/j.crma.2006.09.018},
       URL = {https://doi.org/10.1016/j.crma.2006.09.018},
}

@INPROCEEDINGS{KFG,
  author={Krügel, Lisa and Faulwasser, Timm and Grüne, Lars},
  booktitle={2023 62nd IEEE Conference on Decision and Control (CDC)}, 
  title={Local Turnpike Properties in Finite Horizon Optimal Control}, 
  year={2023},
  volume={},
  number={},
  pages={5273-5278},
  keywords={Costs;Optimal control;Trajectory;Optimization},
  doi={10.1109/CDC49753.2023.10383850}}

@article {BCflock,
    AUTHOR = {Bardi, Martino and Cardaliaguet, Pierre},
     TITLE = {Convergence of some mean field games systems to aggregation
              and flocking models},
   JOURNAL = {Nonlinear Anal.},
  FJOURNAL = {Nonlinear Analysis. Theory, Methods \& Applications. An
              International Multidisciplinary Journal},
    VOLUME = {204},
      YEAR = {2021},
     PAGES = {Paper No. 112199, 24},
      ISSN = {0362-546X,1873-5215},
   MRCLASS = {49N80 (35-XX 35Q70 45K05 91A16)},
  MRNUMBER = {4199409},
MRREVIEWER = {Benjamin\ Seeger},
       DOI = {10.1016/j.na.2020.112199},
       URL = {https://doi.org/10.1016/j.na.2020.112199},
}

@article{BriCar17,
	abstract = {We introduce the notion of stable solution in mean field game theory: they are locally isolated solutions of the mean field game system. We prove that such solutions exist in potential mean field games and are local attractors for some learning procedures.},
	author = {Briani, Ariela and Cardaliaguet, Pierre},
	date = {2017/12/11},
	date-added = {2025-11-07 13:28:07 +0100},
	date-modified = {2025-11-07 13:28:07 +0100},
	doi = {10.1007/s00030-017-0493-3},
	id = {Briani2017},
	isbn = {1420-9004},
	journal = {Nonlinear Differential Equations and Applications NoDEA},
	number = {1},
	pages = {1},
	title = {Stable solutions in potential mean field game systems},
	url = {https://doi.org/10.1007/s00030-017-0493-3},
	volume = {25},
	year = {2017},
	bdsk-url-1 = {https://doi.org/10.1007/s00030-017-0493-3}}

@article{BLS25,
title = {Approximation and perturbations of stable solutions to a stationary mean field game system},
journal = {Journal de Mathématiques Pures et Appliquées},
volume = {194},
pages = {103666},
year = {2025},
issn = {0021-7824},
doi = {https://doi.org/10.1016/j.matpur.2025.103666},
url = {https://www.sciencedirect.com/science/article/pii/S0021782425000108},
author = {Jules Berry and Olivier Ley and Francisco J. Silva},
keywords = {Mean field games, Newton's method, Finite element method, Numerical methods},
}

@book {BKRS,
    AUTHOR = {Bogachev, Vladimir I. and Krylov, Nicolai V. and R\"ockner,
              Michael and Shaposhnikov, Stanislav V.},
     TITLE = {Fokker-{P}lanck-{K}olmogorov equations},
    SERIES = {Mathematical Surveys and Monographs},
    VOLUME = {207},
 PUBLISHER = {American Mathematical Society, Providence, RI},
      YEAR = {2015},
     PAGES = {xii+479},
      ISBN = {978-1-4704-2558-6},
   MRCLASS = {35-02 (60J35 60J60)},
  MRNUMBER = {3443169},
MRREVIEWER = {Zhen-Qing\ Chen},
       DOI = {10.1090/surv/207},
       URL = {https://doi.org/10.1090/surv/207},
}

@article {CCS,
    AUTHOR = {Carmona, Rene and Cormier, Quentin and Soner, H. Mete},
     TITLE = {Synchronization in a {K}uramoto mean field game},
   JOURNAL = {Comm. Partial Differential Equations},
  FJOURNAL = {Communications in Partial Differential Equations},
    VOLUME = {48},
      YEAR = {2023},
    NUMBER = {9},
     PAGES = {1214--1244},
      ISSN = {0360-5302,1532-4133},
   MRCLASS = {35Q89 (35D40 49N80 91A16 92B25)},
  MRNUMBER = {4661001},
MRREVIEWER = {N.\ N.\ Ganikhodjaev},
       DOI = {10.1080/03605302.2023.2264611},
       URL = {https://doi.org/10.1080/03605302.2023.2264611},
}

@article {bakry,
    AUTHOR = {Bakry, Dominique and Barthe, Franck and Cattiaux, Patrick and
              Guillin, Arnaud},
     TITLE = {A simple proof of the {P}oincar\'{e} inequality for a large class
              of probability measures including the log-concave case},
   JOURNAL = {Electron. Commun. Probab.},
    VOLUME = {13},
      YEAR = {2008},
     PAGES = {60--66},
   MRCLASS = {60E15 (26D10 47D07)},
  MRNUMBER = {2386063},
MRREVIEWER = {Ivan Gentil},
       DOI = {10.1214/ECP.v13-1352},
       URL = {https://doi.org/10.1214/ECP.v13-1352},
}

@article {MS18,
    AUTHOR = {M\'esz\'aros, Alp\'ar Rich\'ard and Silva, Francisco J.},
     TITLE = {On the variational formulation of some stationary second-order
              mean field games systems},
   JOURNAL = {SIAM J. Math. Anal.},
  FJOURNAL = {SIAM Journal on Mathematical Analysis},
    VOLUME = {50},
      YEAR = {2018},
    NUMBER = {1},
     PAGES = {1255--1277},
      ISSN = {0036-1410,1095-7154},
   MRCLASS = {49K20 (35J50 35J57 49N70 91A13 91A15)},
  MRNUMBER = {3763925},
MRREVIEWER = {Piotr\ Biler},
       DOI = {10.1137/17M1125960},
       URL = {https://doi.org/10.1137/17M1125960},
}

@article {CKWZ,
    AUTHOR = {Cirant, Marco and Kong, Fanze and Wei, Juncheng and Zeng,
              Xiaoyu},
     TITLE = {Critical mass phenomena and blow-up behaviors of ground states
              in stationary second order mean-field games systems with
              decreasing cost},
   JOURNAL = {J. Math. Pures Appl. (9)},
  FJOURNAL = {Journal de Math\'ematiques Pures et Appliqu\'ees. Neuvi\`eme
              S\'erie},
    VOLUME = {198},
      YEAR = {2025},
     PAGES = {Paper No. 103687, 60},
      ISSN = {0021-7824,1776-3371},
   MRCLASS = {49N80 (35B40 35B44 35Q89 91A16)},
  MRNUMBER = {4873723},
       DOI = {10.1016/j.matpur.2025.103687},
       URL = {https://doi.org/10.1016/j.matpur.2025.103687},
}

@article {CVe,
    AUTHOR = {Cirant, Marco and Cosenza, Alessandro and Verzini, Gianmaria},
     TITLE = {Ergodic mean field games: existence of local minimizers up to
              the {S}obolev critical case},
   JOURNAL = {Calc. Var. Partial Differential Equations},
  FJOURNAL = {Calculus of Variations and Partial Differential Equations},
    VOLUME = {63},
      YEAR = {2024},
    NUMBER = {5},
     PAGES = {Paper No. 134, 23},
      ISSN = {0944-2669,1432-0835},
   MRCLASS = {35J50 (35B33 35Q89 49N80 91A16)},
  MRNUMBER = {4745675},
       DOI = {10.1007/s00526-024-02744-2},
       URL = {https://doi.org/10.1007/s00526-024-02744-2},
}

@article {CCCon,
    AUTHOR = {Cesaroni, Annalisa and Cirant, Marco},
     TITLE = {Concentration of ground states in stationary mean-field games
              systems},
   JOURNAL = {Anal. PDE},
  FJOURNAL = {Analysis \& PDE},
    VOLUME = {12},
      YEAR = {2019},
    NUMBER = {3},
     PAGES = {737--787},
      ISSN = {2157-5045,1948-206X},
   MRCLASS = {35J47 (35B25 35J50 49N70 91A13)},
  MRNUMBER = {3864209},
MRREVIEWER = {Nicola\ Soave},
       DOI = {10.2140/apde.2019.12.737},
       URL = {https://doi.org/10.2140/apde.2019.12.737},
}

@ARTICLE{Wilde72,
  author={Wilde, R. and Kokotovic, P.},
  journal={IEEE Transactions on Automatic Control}, 
  title={A dichotomy in linear control theory}, 
  year={1972},
  volume={17},
  number={3},
  pages={382-383},
  keywords={Control theory;Symmetric matrices;Riccati equations;Optimal control;Vectors;Differential equations;Costs;Regulators},
  doi={10.1109/TAC.1972.1099976}}

@article {TZ14,
    AUTHOR = {Tr\'elat, Emmanuel and Zuazua, Enrique},
     TITLE = {The turnpike property in finite-dimensional nonlinear optimal
              control},
   JOURNAL = {J. Differential Equations},
  FJOURNAL = {Journal of Differential Equations},
    VOLUME = {258},
      YEAR = {2015},
    NUMBER = {1},
     PAGES = {81--114},
      ISSN = {0022-0396,1090-2732},
   MRCLASS = {49J15 (49M15)},
  MRNUMBER = {3271298},
MRREVIEWER = {Alexander\ J.\ Zaslavski},
       DOI = {10.1016/j.jde.2014.09.005},
       URL = {https://doi.org/10.1016/j.jde.2014.09.005},
}

@article {TZZ,
    AUTHOR = {Tr\'elat, Emmanuel and Zhang, Can and Zuazua, Enrique},
     TITLE = {Steady-state and periodic exponential turnpike property for
              optimal control problems in {H}ilbert spaces},
   JOURNAL = {SIAM J. Control Optim.},
  FJOURNAL = {SIAM Journal on Control and Optimization},
    VOLUME = {56},
      YEAR = {2018},
    NUMBER = {2},
     PAGES = {1222--1252},
      ISSN = {0363-0129,1095-7138},
   MRCLASS = {49K40 (49J20 93D20)},
  MRNUMBER = {3780737},
MRREVIEWER = {Alexander\ J.\ Zaslavski},
       DOI = {10.1137/16M1097638},
       URL = {https://doi.org/10.1137/16M1097638},
}

@article {TZh,
    AUTHOR = {Tr\'elat, Emmanuel and Zhang, Can},
     TITLE = {Integral and measure-turnpike properties for
              infinite-dimensional optimal control systems},
   JOURNAL = {Math. Control Signals Systems},
  FJOURNAL = {Mathematics of Control, Signals, and Systems},
    VOLUME = {30},
      YEAR = {2018},
    NUMBER = {1},
     PAGES = {Art. 3, 34},
      ISSN = {0932-4194,1435-568X},
   MRCLASS = {49J20 (49K20 93D20)},
  MRNUMBER = {3780457},
MRREVIEWER = {Codru\c ta\ S.\ Stoica},
       DOI = {10.1007/s00498-018-0209-1},
       URL = {https://doi.org/10.1007/s00498-018-0209-1},
}

@article {CG22,
    AUTHOR = {Cirant, Marco and Ghilli, Daria},
     TITLE = {Existence and non-existence for time-dependent mean field
              games with strong aggregation},
   JOURNAL = {Math. Ann.},
  FJOURNAL = {Mathematische Annalen},
    VOLUME = {383},
      YEAR = {2022},
    NUMBER = {3-4},
     PAGES = {1285--1318},
      ISSN = {0025-5831,1432-1807},
   MRCLASS = {35Q89 (35B33 35K40)},
  MRNUMBER = {4458401},
       DOI = {10.1007/s00208-021-02217-3},
       URL = {https://doi.org/10.1007/s00208-021-02217-3},
}

@article {PorrettaMinMax,
    AUTHOR = {Porretta, Alessio},
     TITLE = {On the turnpike property for mean field games},
   JOURNAL = {Minimax Theory Appl.},
  FJOURNAL = {Minimax Theory and its Applications},
    VOLUME = {3},
      YEAR = {2018},
    NUMBER = {2},
     PAGES = {285--312},
      ISSN = {2199-1413},
   MRCLASS = {49K40 (91A13)},
  MRNUMBER = {3882533},
}

@article {CP21,
    AUTHOR = {Cirant, Marco and Porretta, Alessio},
     TITLE = {Long time behavior and turnpike solutions in mildly
              non-monotone mean field games},
   JOURNAL = {ESAIM Control Optim. Calc. Var.},
  FJOURNAL = {ESAIM. Control, Optimisation and Calculus of Variations},
    VOLUME = {27},
      YEAR = {2021},
     PAGES = {Paper No. 86, 40},
      ISSN = {1292-8119,1262-3377},
   MRCLASS = {49N80 (35B40 93E20)},
  MRNUMBER = {4294187},
MRREVIEWER = {Kaivalya\ Bakshi},
       DOI = {10.1051/cocv/2021077},
       URL = {https://doi.org/10.1051/cocv/2021077},
}

@article {CarMas,
    AUTHOR = {Cardaliaguet, P. and Masoero, M.},
     TITLE = {Weak {KAM} theory for potential {MFG}},
   JOURNAL = {J. Differential Equations},
    VOLUME = {268},
      YEAR = {2020},
    NUMBER = {7},
     PAGES = {3255--3298},
}

@misc{CCont,
      title={Uniqueness of solutions to MFG systems with large discount}, 
      author={Cirant, Marco and Continelli, Elisa},
      year={2025},
      note={arXiv:2510.09280},
      archivePrefix={arXiv},
}

@misc{TZSur,
      title={Turnpike in optimal control and beyond: a survey}, 
      author={Tr\'elat, Emmanuel and Zuazua, Enrique},
      year={2025},
      note={arXiv:2503.20342},
      archivePrefix={arXiv},
}

@misc{KTZ,
      title={Local Minimizers in Second Order Mean-field Games Systems with Choquard Coupling}, 
      author={F. Kong and Y. Tong and X. Zeng},
      year={2025},
      note={arXiv:2501.15362},
      archivePrefix={arXiv},
}

@article {PPVV,
    AUTHOR = {Pellacci, Benedetta and Pistoia, Angela and Vaira, Giusi and
              Verzini, Gianmaria},
     TITLE = {Normalized concentrating solutions to nonlinear elliptic
              problems},
   JOURNAL = {J. Differential Equations},
  FJOURNAL = {Journal of Differential Equations},
    VOLUME = {275},
      YEAR = {2021},
     PAGES = {882--919},
      ISSN = {0022-0396,1090-2732},
   MRCLASS = {35J25 (35B25 35Q91)},
  MRNUMBER = {4191345},
       DOI = {10.1016/j.jde.2020.11.003},
       URL = {https://doi.org/10.1016/j.jde.2020.11.003},
}

@article {W99,
    AUTHOR = {Wei, Juncheng},
     TITLE = {On single interior spike solutions of the {G}ierer-{M}einhardt
              system: uniqueness and spectrum estimates},
   JOURNAL = {European J. Appl. Math.},
  FJOURNAL = {European Journal of Applied Mathematics},
    VOLUME = {10},
      YEAR = {1999},
    NUMBER = {4},
     PAGES = {353--378},
      ISSN = {0956-7925,1469-4425},
   MRCLASS = {92C15 (35K57 35Q80)},
  MRNUMBER = {1713076},
MRREVIEWER = {Lionel\ G.\ Harrison},
       DOI = {10.1017/S0956792599003770},
       URL = {https://doi.org/10.1017/S0956792599003770},
}

@misc{KTZZ,
      title={Blow-up Behaviors of Ground States in Ergodic Mean-field Games Systems with Hartree-type Coupling}, 
      author={F. Kong and Y. Tong and X. Zeng and H. Zhou},
      year={2024},
      note={arXiv:2412.02922},
      archivePrefix={arXiv},
}

@article {CP19,
    AUTHOR = {Cardaliaguet, P. and Porretta, A.},
     TITLE = {Long time behavior of the master equation in mean field game
              theory},
   JOURNAL = {Anal. PDE},
    VOLUME = {12},
      YEAR = {2019},
    NUMBER = {6},
     PAGES = {1397--1453},
}

@misc{CM24,
      title={Long Time Behavior and Stabilization for Displacement Monotone Mean Field Games}, 
      author={M. Cirant and A.R. M\'esz\'aros},
      year={2025},
      note={arXiv:2412.14903},
      archivePrefix={arXiv},
}

@article{CLLP1,
	Author = {Cardaliaguet, Pierre and Lasry, Jean-Michel and Lions, Pierre-Louis and Porretta, Alessio},
	Date-Added = {2017-11-14 15:34:57 +0000},
	Date-Modified = {2017-11-14 15:35:02 +0000},
	Fjournal = {Networks and Heterogeneous Media},
	Issn = {1556-1801},
	Journal = {Netw. Heterog. Media},
	Mrclass = {35K40 (35B40 35K59 91A10 91A13)},
	Mrnumber = {2928380},
	Mrreviewer = {Vassili N. Kolokol\cprime tsov},
	Number = {2},
	Pages = {279--301},
	Title = {Long time average of mean field games},
	Url = {https://doi.org/10.3934/nhm.2012.7.279},
	Volume = {7},
	Year = {2012},
	Bdsk-Url-1 = {https://doi.org/10.3934/nhm.2012.7.279}}

@article {CLLP2,
    AUTHOR = {Cardaliaguet, P. and Lasry, J.-M. and Lions, P.-L. and
              Porretta, A.},
     TITLE = {Long time average of mean field games with a nonlocal
              coupling},
   JOURNAL = {SIAM J. Control Optim.},
  FJOURNAL = {SIAM Journal on Control and Optimization},
    VOLUME = {51},
      YEAR = {2013},
    NUMBER = {5},
     PAGES = {3558--3591},
      ISSN = {0363-0129,1095-7138},
   MRCLASS = {49N70 (91A15)},
  MRNUMBER = {3103242},
MRREVIEWER = {Jose\ Luis\ Menaldi},
       DOI = {10.1137/120904184},
       URL = {https://doi.org/10.1137/120904184},
}

@article {ZP,
    AUTHOR = {Porretta, Alessio and Zuazua, Enrique},
     TITLE = {Long time versus steady state optimal control},
   JOURNAL = {SIAM J. Control Optim.},
  FJOURNAL = {SIAM Journal on Control and Optimization},
    VOLUME = {51},
      YEAR = {2013},
    NUMBER = {6},
     PAGES = {4242--4273},
      ISSN = {0363-0129,1095-7138},
   MRCLASS = {49J20 (93B05 93C20)},
  MRNUMBER = {3124890},
MRREVIEWER = {Alexander\ J.\ Zaslavski},
       DOI = {10.1137/130907239},
       URL = {https://doi.org/10.1137/130907239},
}

@article {CC24,
    AUTHOR = {Cesaroni, A. and Cirant, M.},
     TITLE = {Stationary equilibria and their stability in a {K}uramoto
              {MFG} with strong interaction},
   JOURNAL = {Comm. Partial Differential Equations},
  FJOURNAL = {Communications in Partial Differential Equations},
    VOLUME = {49},
      YEAR = {2024},
    NUMBER = {1-2},
     PAGES = {121--147},
}

@article {BK,
    AUTHOR = {Bardi, M. and Kouhkouh, H.},
     TITLE = {Long-time behavior of deterministic mean field games with
              nonmonotone interactions},
   JOURNAL = {SIAM J. Math. Anal.},
    VOLUME = {56},
      YEAR = {2024},
    NUMBER = {4},
     PAGES = {5079--5098},
}

@article {CCDE,
    AUTHOR = {Cecchin, A. and Conforti, G. and Durmus, A. and Eichinger, K.},
     TITLE = {The exponential turnpike phenomenon for mean field game systems: weakly monotone drifts and small interactions},
   JOURNAL = {arXiv:2409.09193},
    VOLUME = {},
      YEAR = {2024},
    NUMBER = {},
     PAGES = {},
}

@article {BZ24,
    AUTHOR = {Bayraktar, E. and Zhang, X.},
     TITLE = {Solvability of infinite horizon {M}c{K}ean-{V}lasov {FBSDE}s
              in mean field control problems and games},
   JOURNAL = {Appl. Math. Optim.},
    VOLUME = {87},
      YEAR = {2023},
    NUMBER = {1},
     PAGES = {Paper No. 13, 26},
}

@article {CC21,
    AUTHOR = {Cesaroni, Annalisa and Cirant, Marco},
     TITLE = {Brake orbits and heteroclinic connections for first order mean
              field games},
   JOURNAL = {Trans. Amer. Math. Soc.},
  FJOURNAL = {Transactions of the American Mathematical Society},
    VOLUME = {374},
      YEAR = {2021},
    NUMBER = {7},
     PAGES = {5037--5070},
      ISSN = {0002-9947,1088-6850},
   MRCLASS = {49N80 (35Q91 49Q22)},
  MRNUMBER = {4273184},
       DOI = {10.1090/tran/8362},
       URL = {https://doi.org/10.1090/tran/8362},
}

@article {MSM22,
    AUTHOR = {Mimikos-Stamatopoulos, N. and Munoz, S.},
     TITLE = {Regularity and long time behavior of one-dimensional
              first-order mean field games and the planning problem},
   JOURNAL = {SIAM J. Math. Anal.},
    VOLUME = {56},
      YEAR = {2024},
    NUMBER = {1},
     PAGES = {43--78},
}

@article {C19,
    AUTHOR = {Cirant, M.},
     TITLE = {On the existence of oscillating solutions in non-monotone
              mean-field games},
   JOURNAL = {J. Differential Equations},
    VOLUME = {266},
      YEAR = {2019},
    NUMBER = {12},
     PAGES = {8067--8093},
}

@book {LSU68,
	AUTHOR = {Lady\v zenskaja, O. A. and Solonnikov, V. A. and Uralceva, N. N.},
	TITLE = {Linear and quasilinear equations of parabolic type},
	SERIES = {Translations of Mathematical Monographs},
	VOLUME = {Vol. 23},
	NOTE = {Translated from the Russian by S. Smith},
	PUBLISHER = {American Mathematical Society, Providence, RI},
	YEAR = {1968},
	PAGES = {xi+648},
	MRCLASS = {35.62},
	MRNUMBER = {241822},
	MRREVIEWER = {B.\ Frank\ Jones, Jr.},
}

@book {Lieb,
	AUTHOR = {Lieberman, Gary M.},
	TITLE = {Second order parabolic differential equations},
	PUBLISHER = {World Scientific Publishing Co., Inc., River Edge, NJ},
	YEAR = {1996},
	PAGES = {xii+439},
	ISBN = {981-02-2883-X},
	MRCLASS = {35-02 (35Bxx 35Dxx 35Kxx)},
	MRNUMBER = {1465184},
	MRREVIEWER = {Siegfried\ Carl},
	DOI = {10.1142/3302},
	URL = {https://doi.org/10.1142/3302},
}

@book{GT77,
	title={Elliptic partial differential equations of second order},
	author={Gilbarg, David and Trudinger, Neil S},
	volume={224},
	number={2},
	year={1977},
	publisher={Springer}
}
	\bibliographystyle{abbrv}

\bigskip

\begin{flushright}
\noindent \verb"cirant@math.unipd.it"\\
Dipartimento di Matematica ``Tullio Levi-Civita''\\ Universit\`a di Padova\\
via Trieste 63, 35121 Padova (Italy)

\medskip
\noindent \verb"debernar@math.unipd.it"\\
Dipartimento di Matematica ``Tullio Levi-Civita''\\ Universit\`a di Padova\\
via Trieste 63, 35121 Padova (Italy)
\end{flushright}

\end{document}